\DeclareSymbolFontAlphabet{\mathbb}{AMSb}
\DeclareSymbolFontAlphabet{\mathbbl}{bbold}
\DeclareFontFamily{U}{wncyss}{}
\DeclareFontShape{U}{wncyss}{m}{n}{<->wncyss8}{}
\DeclareSymbolFont{mcy}{U}{wncyss}{m}{n}
\DeclareMathSymbol{\ra}{\mathord}{mcy}{"61}
\DeclareMathSymbol{\rb}{\mathord}{mcy}{"62}
\DeclareMathSymbol{\rd}{\mathord}{mcy}{"64}
\DeclareMathSymbol{\rg}{\mathord}{mcy}{"67}
\DeclareMathSymbol{\rka}{\mathord}{mcy}{"6B}
\numberwithin{equation}{section}
\numberwithin{figure}{section}
\theoremstyle{plain}
\newtheorem{thm}{\protect\theoremname}[section]
  \theoremstyle{definition}
  \newtheorem{example}[thm]{\protect\examplename}
  \theoremstyle{remark}
  \newtheorem{rem}[thm]{\protect\remarkname}
  \theoremstyle{plain}
  \newtheorem{prop}[thm]{\protect\propositionname}
  \theoremstyle{plain}
  \newtheorem{cor}[thm]{\protect\corollaryname}
  \theoremstyle{definition}
  \newtheorem{defn}[thm]{\protect\definitionname}
  \theoremstyle{plain}
  \newtheorem{lem}[thm]{\protect\lemmaname}
 \theoremstyle{remark}
\theoremstyle{plain}
\theoremstyle{plain}
\def\RR{\mathbb{R}}
\def\CC{\mathbb{C}}
\def\QQ{\mathbb{Q}}
\def\PP{\mathbb{P}}
\def\ZZ{\mathbb{Z}}
\def\HH{\mathbb{H}}
\def\VV{\mathbb {V}}
\def\str{\mathsf{S}}
\def\ff{\mathsf{f}}
\def\fg{\mathsf{g}}
\def\fx{\mathfrak{X}}
\def\cE{\mathcal{E}}
\def\cx{\mathcal{X}}
\def\cs{\mathcal{S}}
\def\ch{\mathcal{H}}
\def\pch{{}^p \ch}
\def\cy{\mathcal{Y}}
\def\cl{\mathcal{L}}
\def\co{\mathcal{O}}
\def\cv{\mathcal{V}}
\def\ppsi{{}^p \psi}
\def\pphi{{}^p \phi}
\def\gr{\mathrm{Gr}}
\def\sp{\mathtt{sp}}
\def\pha{\mathrm{ph}}
\def\van{\mathrm{van}}
\def\lm{\mathrm{lim}}
\def\IH{\mathrm{IH}}
\def\IC{\mathrm{IC}^{\bullet}}
\def\eb{\epsilon^{\bullet}}
\def\uo{\underline{0}}
\def\ux{\underline{x}}
\def\sfs{\mathsf{S}}
\def\UZ{\underline{Z}}
\def\mo{\vec{\mathsf{p}}}
\def\ms{\mathsf{m}}
\def\N{\mathfrak{N}}
\def\M{\mathfrak{M}}
\def\a{\alpha}
\def\disj{\sqcup}
\def\stm{\setminus}
\def\I{\mathscr{I}}
\def\ev{\mathrm{ev}}
\def\TP{\tilde{\PP}^6}
\def\TX{\tilde{X}_0}
\def\b{\mathsf{b}}
\def\B{\mathsf{B}}
\def\EE{\mathbb{E}}
\def\EB{\EE_{\B}}
\def\eb{\EE_{\b}}
\let\oldtocsection=\tocsection
\let\oldtocsubsection=\tocsubsection
\let\oldtocsubsubsection=\tocsubsubsection
\renewcommand{\tocsection}[2]{\hspace{0em}\oldtocsection{#1}{#2}}
\renewcommand{\tocsubsection}[2]{\hspace{1em}\oldtocsubsection{#1}{#2}}
\renewcommand{\tocsubsubsection}[2]{\hspace{2em}\oldtocsubsubsection{#1}{#2}}
  \providecommand{\corollaryname}{Corollary}
  \providecommand{\definitionname}{Definition}
  \providecommand{\examplename}{Example}
  \providecommand{\lemmaname}{Lemma}
  \providecommand{\propositionname}{Proposition}
  \providecommand{\remarkname}{Remark}
\providecommand{\theoremname}{Theorem}
\providecommand{\conventionname}{Convention}
\begin{document}

\title[Hodge theory of degenerations, III]{Hodge theory of degenerations, (III): \\ a vanishing-cycle calculus for non-isolated singularities}

\author{Matt Kerr}
\address{Washington University in St. Louis, Department of Mathematics and Statistics, St. Louis, MO 63130-4899}
\email{matkerr@math.wustl.edu}

 \author{Radu Laza}
 \address{Stony Brook University, Department of Mathematics, Stony Brook, NY 11794-3651}
\email{rlaza@math.stonybrook.edu}

\thanks{The authors are supported in part by NSF grant DMS-2101482 (MK) and DMS-2101640 (RL)}

\bibliographystyle{amsalpha}


\begin{abstract}
We continue our study of the Hodge theory of degenerations, initiated in \cite{KL1} (Part I - general theory) and \cite{KL} (Part II - geometric applications in the isolated singularity case). The focus here is on concrete computations in the case of non-isolated singularities, particularly those for which the singular locus has dimension one.  These examples are significantly more involved than in the previous parts, and include $k$-log-canonical singularities, several specific surface singularities (both slc and non-slc), and certain singular 5-folds arising in the study of Feynman integrals.
\end{abstract}

\maketitle

\tableofcontents

\section*{Introduction}
In this third and final chapter of our study of the interplay between singularities of the central fiber $X_0$ of a degeneration $\cx/\Delta$ and its limiting mixed Hodge structure (\cite{KL1, KL}; see also \cite{KLS}), we turn our focus to the setting where $X_0$ has non-isolated singularities.  The situation here is more subtle and far less chronicled than the isolated singularity case, which is already the subject of a vast literature (cf.~\cite{KL} and references within).  At the core of this installment is a series of examples (discussed in \S\S\ref{S7.3}--\ref{S7.6}) originating in the study of compactifications of moduli spaces of surfaces as well as Feynman amplitudes in mathematical physics.  We believe that our analysis of these examples will prove useful both in relation to the concrete geometric contexts from which they are drawn, and as an illustration of a general calculus in the non-isolated case.

Throughout this series of papers, we have considered the setting of a projective morphism $f\colon \mathcal{X}\to \Delta$ from an irreducible complex analytic space of dimension $n+1$ to the disk, which extends to a projective morphism of quasi-projective varieties.  Here we assume in addition that the $X_t:=f^{-1}(X_t)$ are smooth for $t\neq 0$; and in \S\S\ref{S7.1}-\ref{S7.4} below that $\mathcal{X}$ is also smooth, so that $X_0=f^{-1}(0)$ has hypersurface singularities.  The theme as always is to relate the singularity type to the limiting MHS of the degeneration and the MHS on the singular fiber, in large part through a detailed analysis of the vanishing cohomology.

Recall that the \emph{vanishing-cycle sequence}
\begin{equation}\label{eq7.0}
\to H^k(X_0)\overset{\sp}{\to} H^k_{\lm}(X_t)\to H^k_{\van}(X_t)\overset{\delta}{\to} H^{k+1}(X_0)\overset{\sp}{\to}H^{k+1}_{\lm}(X_t)\to.
\end{equation}
is an exact sequence of MHSs compatible with the action of $T^{\text{ss}}$, the semisimple part of monodromy about $t=0$.  In the present installment, our main purpose is to revisit and extend a formula for the spectrum of a non-isolated hypersurface singularity, and apply it to various examples arising in GIT, MMP, and theoretical physics.  This ``SSS formula'' concerns the case of $\dim(\text{sing}(X_0))=1$, where (by Theorem \textbf{I.5.5(ii)})\footnote{Since they are frequent, references to \cite{KL1} and \cite{KL} will be done in the format \textbf{I.*} and \textbf{II.*}, where \textbf{*} is the section, theorem or equation number.} $H^k_{\van}(X_t)$ is potentially nonzero only for $k=n-1,n,n+1$.   The formula was conjectured by Steenbrink \cite{St89}, then proved by Siersma \cite{Si90} modulo $\ZZ$ and by M. Saito \cite{Sa-steen} in full.  Here we explain how to extend it to weighted spectra, see Theorem \ref{th-sss} and Example \ref{ex-key}. It allows us to compute the mixed Hodge module $\pphi_f \QQ_{\cx}[n+1]$ supported on $\text{sing}(X_0)$, hence (via the spectral sequence \eqref{eq-SS*}) the MHSs and $T^{\text{ss}}$-actions on the $H^k_{\van}(X_t)$ in \eqref{eq7.0}.
 
We carry out this computation, in particular, for all the semi-log-canonical hypersurface singularities in dimension $n=2$ (classified in \cite{LR}) as well as the class $J_{k,\infty}$ of non-slc surface singularities appearing for example in the study of compactified moduli spaces of $K3$'s \cite{log2}.  See \S\S\ref{S7.3}-\ref{S7.4}. In particular, this leads to a lower bound of $\lfloor\tfrac{k-1}{2}\rfloor$ on the geometric genus of $X_t$ when $X_0$ has a $J_{k,\infty}$ singularity, cf.~Theorem \ref{th7.4}.

For applications to 1-parameter degenerations of hypersurfaces in projective space, it is unrealistic to expect a smooth total space $\mathcal{X}$:  the base-locus will meet $Z=\text{sing}(X_0)$.  However, if a Zariski open $Z_0\subset Z$ has type $A_{\infty}$ singularities, we can arrange for this intersection to occur in $Z_0$ (with all multiplicities $1$), making the singularities of $\mathcal{X}$ only nodal.  While \eqref{eq7.0} still holds, this disrupts the Clemens-Schmid sequence (including the local-invariant cycle property), as well as the computation of the vanishing cycle complex.  In \S\ref{S7.5}, we quantify both disruptions in general and give applications to degenerations of surfaces; in particular, the assumptions of Theorem \ref{th7.4} can be weakened to allow for a nodal total space $\mathcal{X}$ (Corollary \ref{cor-dis}). 

The final section \S\ref{S7.6} is a detailed example involving cubic 5-folds defined by the second Symanzik polynomial arising from the ``double-box'' Feynman diagram studied in \cite{Bl21,DHV23}.  (For a discussion of their origin in quantum field theory and a brief history of the mathematical investigation of Feynman graph hypersurfaces more generally, see the Introduction to \cite{DHV23}.)  Our approach here is to consider a smoothing with nodal total space and apply the SSS formula together with the tools of \S\ref{S7.3} and \S\ref{S7.5} and some detailed sheaf-cohomology arguments.  This allows us to compute all the Hodge numbers of the double-box 5-fold $X_0$ and the monodromy of the degeneration in full, deepening the results in [op.~cit.].  We briefly address the geometric meaning of some extension classes appearing in $H^5(X_0)$.

To set the stage for these more specialized results, we begin in \S\ref{S7.1} with a few general ones with no constraint on $\dim(\text{sing}(X_0))$, which make no use of SSS.  A key result here is that the MHSs on the (local) Milnor fibers and (global) vanishing cohomology of a degeneration with $k$-log-canonical singularities are contained in the $(k+1)^{\text{st}}$ step of the Hodge filtration (Theorem \ref{th-klc}).  (It turns out that the double-box 5-folds have 1-log-canonical --- in fact, 1-rational --- singularities.)  We also give a short proof of a result of \cite{EFM} on torsion exponents for the base-change of a semi-stable degeneration.

Because they are used systematically in the paper, we remind the reader of how (weighted) spectra are defined.  Write $\mathbf{e}(\alpha):=e^{2\pi\mathbf{i}\alpha}$.  
\begin{defn}\label{def-spectrum}
Given a MHS $V$ with a finite automorphism $\gamma$, let $V_{\CC}=\oplus V^{p,q}$ be the Deligne bigrading and $E_{\lambda}(-)$ denote eigenspaces of $\gamma$.  The corresponding \emph{spectrum} $\sigma(V,\gamma)=\sum_{\alpha \in \QQ} m_{\alpha} [\alpha]\in \ZZ[\QQ]$ (free abelian group) is given by $m_{\alpha}:=\dim(E_{\mathbf{e}(\alpha)}(\gr_F^{\lfloor\alpha\rfloor}V_{\CC}))$.  The \emph{weighted spectrum} $\tilde{\sigma}(V,\gamma)=\sum_{\alpha,w}m_{\alpha,w}[(\alpha,w)]\in \ZZ[\QQ\times \ZZ]$ is given by $m_{\alpha,w}:=\dim(E_{\mathbf{e}(\alpha)}(V^{\lfloor\alpha\rfloor,w-\lfloor\alpha\rfloor}))$.
\end{defn}
\noindent Typically in this paper, $V$ is the reduced cohomology of a Milnor fiber in a degeneration as above, $\tilde{H}^k(\imath_p^*\phi_f \QQ_{\mathcal{X}})\cong \tilde{H}^k(\mathfrak{F}_{f,p})$ ($p\in \text{sing}(X_0)$), and $\gamma$ is the semisimple part $T^{\text{ss}}$ of the monodromy operator.  In this case the spectra are denoted $\sigma^k_{f,p}$ resp.~$\tilde{\sigma}^k_{f,p}$.

\subsection*{Acknowledgments}
The authors thank the IAS for providing the environment in which, some years ago, this series of papers was first conceived.  
We also thank M.~Saito, A.~Harder and B.~Castor for helpful remarks, discussion, and correspondence.

\section{Remarks on various singularity classes}\label{S7.1}

One theme of our work has been to identify conditions on the singularities of the central fiber $X_0$ of a degeneration under which the limit mixed Hodge structure is determined to some extent by the mixed Hodge structure on $H^*(X_0)$. The quintessential result here is the Clemens-Schmid exact sequence \cite{Clemens}, especially in the case of a semistable degeneration.  While it is valid more generally for $\cx$ smooth (cf.~for instance \cite{KL1}), in the semistable case unipotency of the monodromy operator $T$ means that more of $H^*_{\lim}(X_t)$ is invariant hence comes from $H^*(X_0)$, which in turn is often easier to compute as $X_0$ is a normal-crossing variety.

However, in higher dimensions it is preferable not to modify the central fiber $X_0$ and to allow more complicated singularities. At the same time, we would like to know that some of $H^*_{\lim}$ is constrained by $H^*(X_0)$. In this direction, Steenbrink and Du Bois in the 80's showed that the MHS on an $X_0$ with Du Bois (resp.~rational) singularities controls the $\gr_F^0$ (resp.~ the ``frontier'') of the limiting MHS; \cite{KLS} contains a fairly optimal result in this direction, which is restated below in Theorem \ref{t7.1}. Recently, the notions of higher Du Bois (or, equivalently in this context, higher log-canonical) singularities and higher rational singularities emerged (see \cite{JKSY-DuBois}, \cite{MOPW}, \cite{FL-DuBois}, \cite{MP-rat}). In the first version of \cite{KL} (which appeared before the above-mentioned citations), we have noted that the higher Du Bois/rationality conditions lead to tighter control of the LMHS --- essentially, the central fiber determines the LMHS up to higher coniveau (Theorem \ref{th-klc}). Subsequently, other proofs and related statements were obtained by different methods (\cite{FL-DuBois} and \cite{MY}).

Let $f\colon \cx\to\Delta$ be a projective family, with smooth total space of dimension $n+1$ and reduced singular fiber (that is, $(f)=X_0$). Write $T=T^{\text{ss}}T^{\text{un}}=T^{\text{ss}}e^N$ for the monodromy about $\{0\}$, and $(-)^{T^{\text{ss}}}=(-)^{u}$ resp.~$(-)^n$ for unipotent resp.~non-unipotent parts. Since the non-unipotent parts of $H^k_{\lm}(X_t)^n$ and $H^k_{\van}(X_t)^n$ are isomorphic, the latter consists of $N$-strings centered about $p+q=k$. Moreover, by Clemens-Schmid, $H^k(X_0)$ surjects onto the $N$-invariants in the unipotent part $H^k_{\lm}(X_t)^u$; and so $H^k_{\van}(X_t)^u$ consists of $N$-strings centered about $p+q=k+1$. (As a MHS, $H^k_{\van}(X_t)$ is also symmetric under exchange of $p$ and $q$.)  

\subsection*{Rational and du Bois singularities}
Combining the vanishing cycle sequence \eqref{eq7.0} and its $T^{\text{ss}}$-invariants with the isomorphisms under $\sp$ in Theorems \textbf{I.9.3} and \textbf{I.9.11} (as well as the above symmetries) yields at once the following ``global'' analogue of Proposition \textbf{II.1.6}:

\begin{thm}\label{t7.1}
If $X_0$ has du Bois \textup{[}resp. rational\textup{]} singularities, then $(H^k_{\van}(X_t)^u)^{p,q}$ vanishes for $(p,q)$ outside the range $[\max\{1,k-n+1\},\min\{k,n\}]^{\times 2}$ \textup{[}resp. $[\max\{2,k-n+2\},\min\{k-1,n-1\}]^{\times 2}$\textup{]} while $(H^k_{\van}(X_t)^n )^{p,q}$ vanishes for $(p,q)$ outside the range $[\max\{1,k-n+1\},\min\{k-1,n-1\}]^{\times 2}$. In particular, the level of $H^k_{\pha}(X_0)$ is no more than $\min\{k-2,2n-k\}$ \textup{[}resp. $\min\{k-4,2n-k-2\}$\textup{]}.
\end{thm}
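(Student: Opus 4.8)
The plan is to run the vanishing-cycle sequence \eqref{eq7.0} through the $T^{\text{ss}}$-isotypic decomposition, read off Hodge--Deligne types, and insert at the appropriate moments the $\gr_F$-control furnished by Theorems \textbf{I.9.3} and \textbf{I.9.11}, the (Clemens--Schmid) identification of $\mathrm{im}(\sp)$, the elementary shape of $H^\bullet_{\lm}(X_t)$ and $H^\bullet(X_0)$, and the symmetries recalled above; this parallels the local Proposition \textbf{II.1.6}.

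First I would split \eqref{eq7.0} into its $T^{\text{ss}}$-unipotent and non-unipotent parts. Since $H^\bullet(X_0)$ is $T^{\text{ss}}$-trivial, on the $n$-part the maps into and out of $H^\bullet(X_0)$ vanish, giving $H^k_{\van}(X_t)^n\cong H^k_{\lm}(X_t)^n$; on the $u$-part a four-term piece presents $H^k_{\van}(X_t)^u$ as an extension of $\ker\!\bigl(\sp\colon H^{k+1}(X_0)\to H^{k+1}_{\lm}(X_t)^u\bigr)$ by $\mathrm{coker}\!\bigl(\sp\colon H^k(X_0)\to H^k_{\lm}(X_t)^u\bigr)$, and since $\cx$ is smooth Clemens--Schmid identifies the image of the latter $\sp$ with $\ker N$, so this cokernel is $N\,H^k_{\lm}(X_t)^u$, sitting simultaneously as a quotient of $H^k_{\lm}(X_t)^u$ and (via $N$) as a sub of $H^k_{\lm}(X_t)^u(-1)$. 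I would then feed in the coarse inputs: $\gr_F^p H^k_{\lm}(X_t)\cong\gr_F^p H^k(X_t)$ vanishes for $p\notin[\max\{0,k-n\},\min\{k,n\}]$ (symmetrically in $\bar F$, $H^k_{\lm}$ being $p\leftrightarrow q$-symmetric); $\gr_F^p H^m(X_0)=0$ for $p\notin[0,\min\{m,n\}]$ as $\dim X_0=n$, while $\sp$ is injective on the top Hodge piece $\gr_F^{\min\{m,n\}}H^m(X_0)$ (by strictness of $\sp$ and Clemens--Schmid, equivalently because $N$ annihilates the top $\gr_F$ of the LMHS); and $H^k_{\van}(X_t)^u$ (resp.\ ${}^{n}$) is built of $N$-strings about $p+q=k+1$ (resp.\ $k$). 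Reading off the $u$-extension with the $(1,1)$-shift carried by $N\,H^k_{\lm}(X_t)^u$, and the $n$-isomorphism, already pins the Hodge types of $H^k_{\van}(X_t)^u$ into $[\max\{1,k-n+1\},\min\{k,n\}]^{\times 2}$ up to possibly $\gr_F^0,\gr_{\bar F}^0$, and those of $H^k_{\van}(X_t)^n$ into $[\max\{0,k-n\},\min\{k,n\}]^{\times 2}$.

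The du Bois hypothesis enters via Theorem \textbf{I.9.3}: $\sp$ is a $\gr_F^0$-isomorphism in every degree, hence $\gr_F^0 H^k_{\van}(X_t)=0$ for all $k$, and by $p\leftrightarrow q$-symmetry also $\gr_{\bar F}^0=0$. Together with Poincaré--Verdier self-duality of $\pphi_f\QQ_{\cx}[n+1]$ and properness of $X_0$ --- which dualizes the coarse upper bound $p\le\min\{m+1,n\}$ into the lower bound $p\ge\max\{1,m-n+1\}$ --- this trims the $n$-range to the stated $[\max\{1,k-n+1\},\min\{k-1,n-1\}]^{\times 2}$ and confirms the $u$-range. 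For rational singularities one argues identically with Theorem \textbf{I.9.11} in place of \textbf{I.9.3}: its ``frontier'' isomorphism additionally forces $\gr_F^1 H^k_{\van}(X_t)=0$, hence $\gr_{\bar F}^1=0$ and dually $\gr_F^{n-1}=\gr_{\bar F}^{n-1}=0$, shrinking the $u$-range by one further step on each side (the $n$-range remaining the du Bois one). Finally, since $H^\bullet(X_0)$ is $T^{\text{ss}}$-trivial the connecting map $\delta$ kills $H^\bullet_{\van}(X_t)^n$, so $H^k_{\pha}(X_0):=\ker\!\bigl(\sp\colon H^k(X_0)\to H^k_{\lm}(X_t)\bigr)=\mathrm{im}(\delta)$ is a quotient of $H^{k-1}_{\van}(X_t)^u$; substituting $k-1$ for $k$ in the $u$-range just proved and taking $\max\{|p-q|\}$ over the resulting square gives level $\le\min\{k-2,2n-k\}$ (resp.\ $\min\{k-4,2n-k-2\}$).

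The step I expect to be the real work is the boundary bookkeeping at the two ends of the Hodge range: confirming that the $\gr_F^0$- (resp.\ frontier-) control genuinely propagates to the claimed one-step-smaller squares --- in particular the vanishing of the extremal $\gr_F$ of $H^k_{\lm}(X_t)^n$ and the injectivity of $\sp$ on the top Hodge piece of $H^\bullet(X_0)$, as opposed to the coarse bounds --- and keeping the $k\le n$ and $k\ge n$ regimes in step, which is cleanest via the duality $H^k_{\van}(X_t)\cong H^{2n-k}_{\van}(X_t)^{\vee}(-n-1)$.
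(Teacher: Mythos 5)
Your outline is, in substance, the argument the paper sketches in the paragraphs immediately preceding the theorem: split \eqref{eq7.0} into $T^{\text{ss}}$-parts, identify $\mathrm{coker}(\sp)$ with $N H^k_{\lm}(X_t)^u$ via Clemens--Schmid, and feed in Theorems \textbf{I.9.3}/\textbf{I.9.11} together with the $N$-string and $p\leftrightarrow q$ symmetries; that bookkeeping does pin down the stated ranges when $k\le n$. Where you genuinely depart from the paper is in how the full statement is reached: the authors note that this argument only gives the result when $\cx$ and $f$ admit algebraic extensions and when $k\le n$, and their proof is simply to invoke \cite[(2.4.5) and (2.5.9)]{KLS}, which covers all degrees and the analytic setting (algebraicity is only assumed \emph{after} this theorem). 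Your substitute for the range $k>n$ --- self-duality of $\pphi_f\QQ_{\cx}[n+1]$ plus properness of $X_0$ --- is a legitimate, more self-contained route and does reproduce the claimed ranges; note only that the twist is $(-n-1)$ on the unipotent part but $(-n)$ on the non-unipotent part (matching the strings centered at $k+1$ resp.\ $k$), so a single duality $H^k_{\van}(X_t)\cong H^{2n-k}_{\van}(X_t)^{\vee}(-n-1)$ is off by a Tate twist on the non-unipotent summand. Two caveats: your proof inherits whatever hypotheses \textbf{I.9.3}/\textbf{I.9.11} carry (this is precisely the algebraicity issue the authors flag, and the reason they outsource the proof to \cite{KLS}), and in the rational case you need the ``frontier'' isomorphism to kill $\gr_F^1$ of $H^*_{\van}$, which you assert rather than verify. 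On the other hand, the injectivity of $\sp$ on the top Hodge piece of $H^\bullet(X_0)$, which you list as needed, is not: the absence of $q=0$ (hence, by conjugation of the rational sub-MHS $\ker(\sp)$, of $p=0$) types already follows from the $\gr_F^0$-statement alone, and the upper bounds come from the weight bound on $H^{k+1}(X_0)$ and the string structure.
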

\[\includegraphics[scale=0.7]{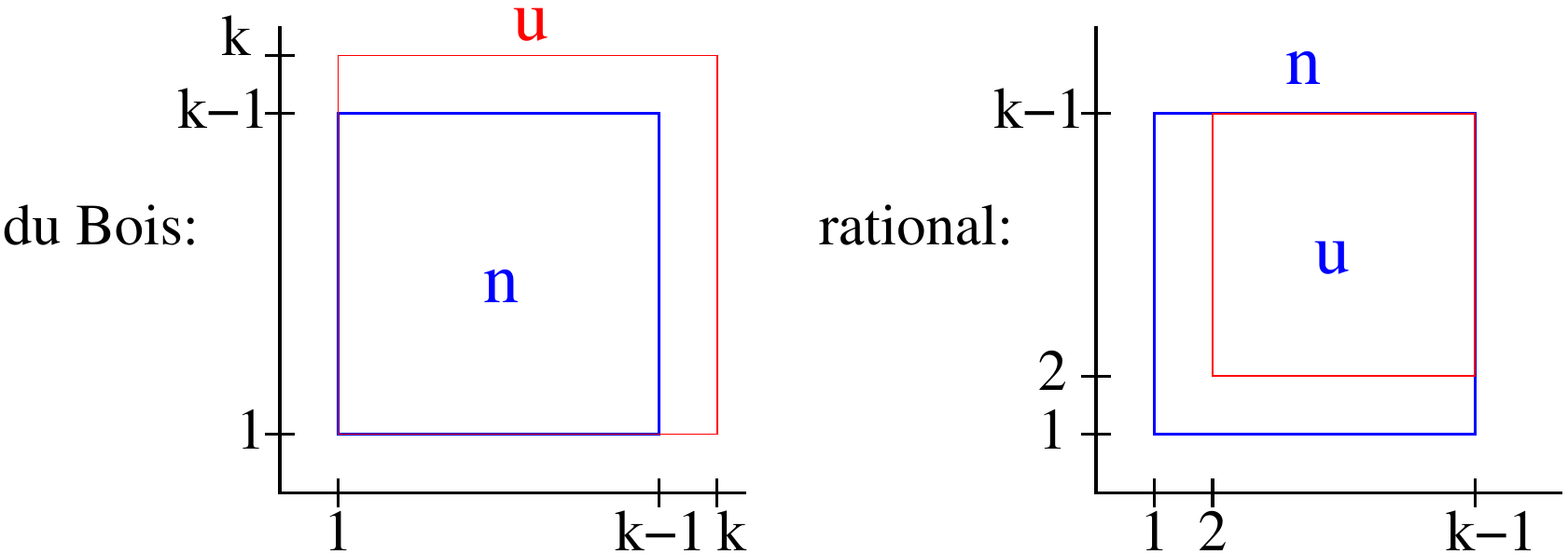} \]
\begin{proof}
More precisely, the above argument only gives the result when $\cx$ and $f$ admit algebraic extensions, and when $k\leq n$. The full statement follows immediately from \cite[(2.4.5) and (2.5.9)]{KLS}.
\end{proof}

\begin{rem}
(i) Note that in the rational case, the space\footnote{Recall that the \emph{phantom cohomology} $H^k_{\pha}(X_0)$ is the kernel of $\sp$ (or equivalently, image of $\delta$) in \eqref{eq7.0}.} $H^{1,k}_{\pha}(X_0)$ considered in \S\textbf{II.4} is outside the ``$u$'' range hence vanishes.

(ii) The result actually holds (by the proof of \cite[Thm.~3]{KLS}) as long as $\cx$ is an intersection homology manifold, i.e. $\IC_{\cx}=\QQ_{\cx}[n+1]$.
\end{rem}

Henceforth we assume that $f$ extends to a projective morphism of quasi-projective algebraic varieties.

\subsection*{Normal-crossing singularities and semistable reduction}
Though a normal-crossing variety has du Bois singularities, the term ``du Bois special fiber'' is typically used in the context where $X_0=(f)$, since this is when it implies ``cohomological insignificance'' of the degeneration (in the sense that $\gr^0_F H^k(X_0)\cong \gr_F^0 H_{\lm}^k(X_t)$). Suppose instead that $X_0 = \cup Y_i \subset \cx$ is a NCD with $\cx$ and $\{Y_i\}$ smooth, and $\text{ord}_{Y_i}(f)=a_i \in \mathbb{N}$ ($\implies \text{sing}(X_0)=\cup_{a_i >1} Y_i$ in Thm.~\textbf{I.5.5}). Calculating $\psi_f \QQ_{\cx}$ yields a short proof of the well-known

\begin{prop}[Clemens \cite{Clemens}]
The order of $T^{\text{ss}}$ on $H^*_{\lm}$ \textup{(}or $H^*_{\van}$\textup{)} divides $\text{lcm}\{a_i\}$.
\end{prop}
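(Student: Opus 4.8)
The plan is to compute the nearby-cycle complex $\psi_f \QQ_{\cx}$ locally on $X_0$ and read off the order of the semisimple monodromy $T^{\text{ss}}$ from the local picture. Near a point $p$ lying on exactly the components $Y_{i_1},\dots,Y_{i_r}$ (and no others), choose local analytic coordinates $z_1,\dots,z_{n+1}$ so that $Y_{i_j} = \{z_j = 0\}$ for $1\le j\le r$; then, up to a unit, $f = z_1^{a_{i_1}}\cdots z_r^{a_{i_r}}$. The Milnor fiber $\mathfrak{F}_{f,p}$ of such a monomial map is a classical computation: its reduced cohomology is concentrated in a range of degrees, and the monodromy acts through the diagonal $\mu_{a_{i_1}}\times\cdots\times\mu_{a_{i_r}}$ action on the torus $\{z_j \neq 0\}$, so every eigenvalue of $T^{\text{ss}}$ on $\tilde H^*(\mathfrak{F}_{f,p})$ is an $\operatorname{lcm}\{a_{i_j}\}$-th root of unity. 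Hence the stalk cohomology sheaves $\mathcal{H}^k(\psi_f \QQ_{\cx})$ carry a $T^{\text{ss}}$ whose order at $p$ divides $\operatorname{lcm}\{a_{i_j} : p \in Y_{i_j}\}$, which in turn divides $\operatorname{lcm}\{a_i\}$.

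From the local statement I would pass to the global one via the hypercohomology spectral sequence
\[
E_2^{s,t} = H^s(X_0, \mathcal{H}^t(\psi_f \QQ_{\cx})) \Rightarrow H^{s+t}_{\lm}(X_t),
\]
which is $T^{\text{ss}}$-equivariant. Since $T^{\text{ss}}$ acts on each $E_2$ term through the (finite, as $X_0$ is compact) set of stalkwise eigenvalues, its order on the abutment $H^*_{\lm}(X_t)$ divides $\operatorname{lcm}\{a_i\}$; the $H^*_{\van}$ case follows from the vanishing-cycle sequence \eqref{eq7.0} (equivalently, $\phi_f = \operatorname{cone}(\QQ_{X_0}\to\psi_f\QQ_{\cx})[-1]$ is built from the same stalks), so $T^{\text{ss}}$ has the same constraint there.

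The only real point requiring care—the "main obstacle"—is the monomial Milnor-fiber computation: one must check that the $T^{\text{ss}}$-action on $\tilde H^*(\mathfrak F_{f,p})$ really does factor through $\mu_{\operatorname{lcm}\{a_{i_j}\}}$ and that no larger-order eigenvalues sneak in. This is standard (it is, for instance, implicit in A'Campo–Steenbrink's description of nearby cycles for normal crossings, and is exactly the content of the description of $\psi_f\QQ_{\cx}$ alluded to in Thm.~\textbf{I.5.5}), so in the write-up I would simply invoke that description: the nearby-cycle sheaf decomposes into $T^{\text{ss}}$-eigen-summands indexed by $\tfrac{1}{\operatorname{lcm}\{a_i\}}\ZZ/\ZZ$, each a constructible complex, and the claim is immediate. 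Everything else is formal.
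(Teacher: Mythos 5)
Your proposal is correct, and its overall architecture (compute the stalks of $\psi_f\QQ_{\cx}$ at a point of the stratum $Y_{i_1}\cap\cdots\cap Y_{i_r}$, bound the $T^{\text{ss}}$-eigenvalues there, then globalize via $H^*_{\lm}=\HH^*(X_0,\psi_f\QQ_{\cx})$ and the vanishing-cycle triangle) is the same as the paper's. The difference lies in the local step: you invoke the classical monomial Milnor-fiber picture, choosing a geometric monodromy inside $\mu_{a_{i_1}}\times\cdots\times\mu_{a_{i_r}}$, which gives that the stalk eigenvalues are $\text{lcm}\{a_{i_j}\}$-th roots of unity --- enough for the statement. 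The paper instead factors $\ff=z_1^{a_1}\cdots z_r^{a_r}$ through the finite map $\rho(z)=(z_1^{a_1},\ldots,z_r^{a_r},z_{r+1},\ldots,z_{n+1})$ and a reduced normal-crossing function $\fg$, so that $\imath_p^*\psi_{\ff}\QQ_U$ is expressed via $\psi_{\fg}\rho_*\QQ_U$ with unipotent $\fg$-monodromy; this yields the sharper fact that the stalk eigenvalues are $N_p$-th roots of unity with $N_p=\gcd\{a_{i_j}\}$ (so in fact the order of $T^{\text{ss}}$ divides the lcm of these gcds), and it also sets up the explicit description of $H^k(\mathfrak{F}_{\fg,\uo})$ used in the subsequent remark. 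One small point to tighten in your globalization: triviality of $(T^{\text{ss}})^M$ on every $E_2$ term only gives a priori that $(T^{\text{ss}})^M$ is unipotent on the abutment; you should either add that $T^{\text{ss}}$ is semisimple on $H^*_{\lm}$ (so a unipotent power is the identity), or, cleaner and closer to the paper's implicit argument, use the eigenvalue decomposition $\psi_f\QQ_{\cx}\otimes\CC=\oplus_{\lambda}\psi_{f,\lambda}$, whose nonzero summands are detected stalkwise, so that the eigenvalues on hypercohomology are among the stalkwise ones.
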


\begin{proof}
Let $z_1,\ldots,z_{n+1}$ be holomorphic coordinates on an open ball $U\subset\cx$ about $p\in X_0$, so that (renumbering the $V_i = U\cap Y_i$) $\ff:=f|_U = z_1^{a_1}\cdots z_r^{a_r}$. The Milnor fiber $\mathfrak{F}_{f,p}$ at $p$ has $N_p := \gcd\{a_1,\ldots,a_r\}$ connected components, which are cyclically permuted by $T$. Indeed, writing $U\overset{\ff}{\to}\Delta$ as a composition $U\overset{\rho}{\to}V\overset{\fg}{\to}\Delta$ where $\fg(x_1,\ldots,x_{n+1})=x_1\cdots x_k$ and $\rho(z_1,\ldots, z_{n+1})=(z_1^{a_1},\ldots,z_r^{a_r},z_{r+1},\ldots,z_{n+1})$, finiteness of $\rho$ gives $\imath_p^*\psi_{\ff}\QQ_U=\imath_{\uo}^*\psi_{\fg} \rho_* \QQ_U = \oplus_{j=0}^{N_p -1}\imath_{\uo}^* \psi_{\fg} \QQ_V$ (with $T^{\text{ss}}$ permuting factors) hence $H^k(\mathfrak{F}_{f,p})\otimes \CC = \oplus_{\ell=0}^{N_p -1}H^k(\mathfrak{F}_{\fg,\uo})\otimes \CC_{\zeta_{N_p}^{\ell}}$ (with $T^{\text{ss}}$ multiplying $\CC_{\chi}$ by $\chi$). Since $\HH^*_{\lm}=\HH^*(X_0,\psi_f\QQ_{\cx})$ and each $N_p \mid \text{lcm}\{a_i\}$, we are done.	
\end{proof}

\begin{rem}
To see what the $H^k(\mathfrak{F}_{\fg,\uo})= H^{k-n}(\imath^*_{\uo}\ppsi_{\fg}\QQ_V [n+1])$ look like for the semistable degeneration $\fg\colon V\to \Delta$ above, let $\tilde{V}\overset{\beta}{\to}V$ be the blow-up along $x_1 = \cdots = x_r =0$ and $D:=\beta^{-1}(\uo)\cong \PP^{n-r}$. Writing $((\CC^*)^{n-r}\cong)\,D^*\overset{\jmath}{\hookrightarrow}D$ for the complement of the proper transform of $\cup_{i=1}^r \{x_i =0\}$, we have\footnote{Here we use that $\psi_{\fg}$ depends only on the restriction of its argument to $g\neq 0$. Note that $\fg\circ \beta$ has order $r$ on $D^*$, so $\psi_{\fg\circ \beta}\QQ_V$ has rank $r$ there; but the unipotent part $\psi^u_{\fg\circ \beta}\QQ_V$ only has rank $1$.} 
\begin{equation*}
	\begin{split}
	\imath^*_{\uo}\ppsi_{\fg}\QQ_V[n+1] &= \imath^*_{\uo}\ppsi_{\fg}^u \QQ_V[n+1] \cong \imath_{\uo}^* \ppsi_{\fg}^u R\beta_* \QQ_{\tilde{V}}[n+1]\\
	&\cong \imath^*_{\uo}R\beta_* \ppsi^u_{\fg\circ\beta}\QQ_{\tilde{V}}[n+1]\cong R\Gamma_D \imath_D^* \ppsi^u_{\fg\circ \beta}\QQ_{\tilde{V}}[n+1]\\
	& \cong R\Gamma_D R\jmath_* \jmath^* \imath^*_D \ppsi^u_{\fg\circ \beta}\QQ_{\tilde{V}}[n+1] \cong R\Gamma_{D^*}\QQ_{D^*}[n]
	\end{split}
\end{equation*}
where we used \cite[(4.7)]{Sa-steen} for the penultimate isomorphism. Consequently
\begin{equation}\label{eq7.1}
H^k(\mathfrak{F}_{x_1\cdots x_r,\uo} )\cong H^k((\CC^*)^{n-r})\cong \QQ(-k)^{\oplus \binom{n-r}{k}}	
\end{equation}
as MHS's. If our original $f\colon \cx \to \Delta$ was semistable (locally $U=V$; all $a_i =1$), one deduces that $\ppsi_f\QQ_{\cx}[n+1]$ is a cosimplicial complex with term $\mathcal{I}^{[r]}_* \QQ_{X_0^{[r]}}\otimes H^*(\PP^r)$ in degree $r-n$, where $\mathcal{I}^{[r]}\colon X_0^{[r]}\,(=\amalg_{|I|=r+1}Y_I)\longrightarrow X_0$ denotes the normalization of the $(n-r)$-dimensional stratum of $X_0$. Replacing $H^*(\PP^r)$ by $\tilde{H}^*(\PP^r)$ yields $\pphi_f \QQ_{\cx}[n+1]$, as claimed in the proof of Theorem \textbf{I.6.4}.
\end{rem}

\begin{rem}\label{R7.1}
Suppose $\cy \overset{g}{\to}\Delta$ and $\cx \overset{f}{\to}\Delta$ are normal-crossing degenerations, with the first obtained from the second via base-change (by $\rho\colon t\mapsto t^{\ell}$) and birational modifications over $0$.  The \emph{torsion exponents} $\ell_j \in \mathbb{N}$ associated to this scenario are defined by the exact sequence
\begin{equation}\label{r1}
0\to \rho^* \RR^k f_* \Omega^{\bullet}_{\cx/\Delta}(\log X_0)\to \RR^k g_* \Omega^{\bullet}_{\cy/\Delta}(\log Y_0)\to \oplus_j \co/t^{\ell_j}\co \to 0.
\end{equation}
In this remark we explain how these relate to $\cv$-filtrations and (in some cases) spectra.

Writing $\HH:=R^k f^{\times}_*\CC_{\cx\setminus X_0}$ [resp. $\tilde{\HH}:=R^k g^{\times}_* \CC_{\cy\setminus Y_0} = \rho^* \HH$] for the local systems over $\Delta^*$, we may define lattices $\cv^{\beta} = \oplus_{\alpha\in[\beta,\beta+1)}\CC\{t\} C^{\alpha}\subset (\jmath_* (\HH\otimes \co_{\Delta^*}))_0$ [resp. $\tilde{\cv}^{\beta},\tilde{C}^{\alpha}$] as in the first two paragraphs of $\S$\textbf{II.5.2}.  By \cite[Proposition 2.20]{St2}, the stalks of the locally free sheaves $\RR^k f_* \Omega^{\bullet}_{\cx/\Delta}(\log X_0)$ and $\RR^k g_* \Omega_{\cy/\Delta}(\log Y_0)$ at $0$ identify with $\cv^0$ resp. $\tilde{\cv}^0$.  Since $\delta_{t^{\ell}}=\tfrac{1}{\ell}\delta_t$, base-change yields isomorphisms $\rho^* C^{\alpha}\overset{\cong}{\to} \tilde{C}^{\ell\alpha} = t^{\lfloor \ell\alpha\rfloor}\tilde{C}^{\{\ell\alpha\}}$ for each $\alpha \in [0,1)$.  Further, by [op. cit., (2.16-21)], we have a (non-canonical) isomorphism of $H^k_{\lm}(X_t)$ with
\begin{equation}\label{r2}
\RR^k f_* \Omega_{\cx/\Delta}(\log X_0)\otimes_{\co_{\Delta}}\co_{\{0\}}\cong \cv^0/t \cv^0 \cong \cv^0/\cv^1 = \oplus_{\alpha \in [0,1)} C^{\alpha}
\end{equation}
that identifies the $\mathbf{e}(-\alpha)$-eigenspace of $T^{\text{ss}}$ on $H^k_{\lm}$ with $C^{\alpha}$.  The upshot is that 
\begin{equation}\label{r3}
\oplus_j \co/t^{\ell_j}\co = \oplus_{\alpha\in [0,1)} \left( \co/t^{\lfloor \ell\alpha \rfloor}\co\right)^{\oplus \dim (H^k_{\lm}(X_t)_{\mathbf{e}(-\alpha)})};
\end{equation} 
in particular, if $\cy$ is semistable, then (for all $\alpha$ appearing) $\{\ell\alpha\}=0 \implies \lfloor \ell\alpha \rfloor =\ell\alpha$.  Refining this observation with respect to the Hodge filtration (viz., $\gr_F^p H^k_{\lm}(X_t)_{\mathbf{e}(-\alpha)}=\gr_F^p C^{\alpha}$) yields a short proof of \cite[Thm. D]{EFM}.

Now consider the special case where $(\cx,X_0)$ (with semistable reduction $\cy$) arises as a log-resolution of $(\fx,D)$, where $\fx\overset{F}{\to}\Delta$ also has smooth total space, and $D=F^{-1}(0)$ is reduced with a single isolated singularity at $x$ with spectrum $\sum_{\beta \in \QQ}m_{\beta}[\beta]$.  The nonzero eigenvalues $\alpha$ of $\delta_t$ appearing in \eqref{r2} [resp. torsion exponents in \eqref{r3}] are then the fractional parts $\{-\beta\}$ [resp. $\ell\{-\beta\}$] for those $\beta \in \QQ\setminus \ZZ$ with $m_{\beta}\neq 0$.  More precisely, we have (essentially by [loc. cit.]) that 
\begin{equation}\label{r4}
\frac{R^{n-p}g_* \Omega^p_{\cy/\Delta}(\log Y_0)}{\rho^* R^{n-p}f_* \Omega^p_{\cx/\Delta}(\log X_0)} \cong \oplus _{\beta \in (p,p+1)\cap \QQ} \left( \co /t^{\ell\{-\beta\}}\co\right)^{\oplus m_{\beta}}.
\end{equation}
To give a simple example, if $\fx$ is a degeneration of elliptic curves with type IV singular fiber (hence $D_4$ singularity at $x$), we can take $\cx$ to be its blowup at $x$, and $Y_0$ to be the (smooth) CM elliptic tail.  Since the spectrum of $D_4$ is $[\tfrac{2}{3}]+2[1]+[\tfrac{4}{3}]$, taking $n=p=1$ and $\ell=3$ gives $\text{RHS}\eqref{r4}=\co/t^2\co$.  Indeed, in local coordinates at a node of $X_0$ we have $f\sim u^3 v$, and $\omega\sim u^2 dv$ for the generator of $(R^0 f_*\Omega_{\cx/\Delta}^1 (\log X_0))_0$; clearly then $\rho^* \omega$ vanishes to order $2$ along $Y_0$, verifying \eqref{r4} in this case.
\end{rem}

\subsection*{$\mathbf{k}$-log-canonical singularities}
Returning to the reduced singular fiber setting, we address how Corollary \textbf{II.4.2} generalizes to the setting of non-isolated singularities.

\begin{thm}\label{th-klc}
If $X_0$ has $k$-log-canonical singularities \textup{(}$\mathcal{I}_r(X_0)=\co_{\cx}$ $\forall r\leq k$\textup{),} then\footnote{Note:  superscript $*$ indicates that the statements hold in all degrees of cohomology.} $\tilde{H}^*(\mathfrak{F}_{f,x})\subseteq F^{k+1}\tilde{H}^*(\mathfrak{F}_{f,x})$ \textup{(}$\forall x\in\mathrm{sing}(X_0)$\textup{)} and 
\begin{equation}\label{eq-th1.6}
\gr^i_F H^*(X_0)\cong \gr_F^i H_{\lm}^*(X_t) \textup{($0\leq i\leq k$)}.
\end{equation}
\end{thm}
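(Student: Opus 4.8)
The plan is to deduce the statement from a local-to-global argument built on the mixed Hodge module $\phi_f\QQ_{\cx}[n+1]$, combined with the vanishing-cycle sequence \eqref{eq7.0}. First I would establish the local claim: for each $x\in\sing(X_0)$ the reduced Milnor fiber cohomology $\tilde H^*(\fF_{f,x})$ lives in $F^{k+1}$. By definition of $k$-log-canonical singularities, the Hodge ideals (or equivalently the higher Du Bois ideals $\mathcal{I}_r(X_0)$) are trivial for all $r\le k$; by the now-standard comparison between higher Du Bois and higher log-canonicity in the hypersurface case, this forces the graded pieces $\gr^i_F$ of the vanishing-cycle module $\pphi_f\QQ_{\cx}[n+1]$ supported on $\sing(X_0)$ to vanish for $i\le k$. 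Translating back through the identification $\tilde H^k(\imath_x^*\phi_f\QQ_{\cx})\cong \tilde H^k(\fF_{f,x})$ in Definition \ref{def-spectrum} and the fact that $\phi_f=\pphi_f$ here (the sheaf is concentrated on the singular locus), one gets that the Hodge filtration on $\tilde H^*(\fF_{f,x})$ starts in degree $k+1$. This is really the content of Corollary \textbf{II.4.2} in the isolated case; the point is that the $k$-log-canonical hypothesis is local and the argument is insensitive to the dimension of $\sing(X_0)$.

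Next I would globalize. The sheaf $\phi_f\QQ_{\cx}[n+1]$ is supported on $\sing(X_0)$, and its hypercohomology computes $H^*_{\van}(X_t)$ (up to the usual shift), so the local bound $\tilde H^*(\fF_{f,x})\subseteq F^{k+1}$ propagates — via the spectral sequence computing hypercohomology from the stalks, all of whose $E_1$-terms sit in $F^{\ge k+1}$, together with strictness of morphisms of MHS — to give $H^*_{\van}(X_t)\subseteq F^{k+1}H^*_{\van}(X_t)$. (One should be slightly careful that $H^k_{\van}$ here means $H^k$ of the complex $\phi_f\QQ_{\cx}[n]$, i.e.\ reduced vanishing cohomology; the degree shift is harmless.) Now feed this into \eqref{eq7.0}: for $i\le k$ the map $\delta\colon H^{k}_{\van}(X_t)\to H^{k+1}(X_0)$ lands, on $\gr^i_F$, in a source that vanishes, so $\gr^i_F\delta=0$; by strictness this means $\gr^i_F\sp\colon \gr^i_F H^*(X_0)\to \gr^i_F H^*_{\lm}(X_t)$ is injective. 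Similarly $\gr^i_F$ of the preceding $H^{k}_{\lm}(X_t)\to H^{k}_{\van}(X_t)$ is zero for $i\le k$ (target vanishes in those Hodge degrees), so $\gr^i_F\sp$ is also surjective. Hence $\gr^i_F\sp$ is an isomorphism for $0\le i\le k$, which is \eqref{eq-th1.6}.

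The one genuine subtlety — the step I expect to be the main obstacle — is the precise translation of "$\mathcal{I}_r(X_0)=\co_{\cx}$ for all $r\le k$" into the statement that $\gr^i_F\pphi_f\QQ_{\cx}[n+1]=0$ for $i\le k$, i.e.\ connecting the higher Du Bois/higher log-canonical ideal-theoretic condition to the Hodge filtration on the vanishing cycle module. In the isolated case this was handled in \cite{KL} (Corollary \textbf{II.4.2}); in the non-isolated case one must use the reduced-fiber hypothesis $(f)=X_0$ together with the characterization of higher log-canonicity via the $V$-filtration (Mustață–Popa, and the references cited in the text: \cite{JKSY-DuBois}, \cite{MOPW}, \cite{FL-DuBois}, \cite{MP-rat}), and check that the relevant microlocal/$V$-filtration statement is purely local along $\sing(X_0)$ so that no new global input is needed. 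Everything else — strictness in the category of MHS, the support of $\phi_f$, the hypercohomology spectral sequence, and the bookkeeping in \eqref{eq7.0} — is routine once this translation is in hand. I would also remark that the same argument gives the stated containment $\tilde H^*(\fF_{f,x})\subseteq F^{k+1}\tilde H^*(\fF_{f,x})$ as an immediate by-product, so the two assertions of the theorem are proved in one stroke.
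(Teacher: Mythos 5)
Your skeleton coincides with the paper's (establish an $F^{k+1}$-bound on vanishing cycles locally and globally, then obtain \eqref{eq-th1.6} from \eqref{eq7.0} by strictness), but the step you defer as ``the main obstacle'' is the entire substance of the paper's proof, and the shortcut you propose for it does not work as stated. You claim that triviality of $\mathcal{I}_r(X_0)$ for $r\le k$ forces $\gr^i_F$ of $\pphi_f\QQ_{\cx}[n+1]$ to vanish for $i\le k$, and that this ``translates back'' to $\tilde H^*(\fF_{f,x})\subseteq F^{k+1}$ because the isolated-case argument of Corollary \textbf{II.4.2} is ``insensitive to the dimension of $\sing(X_0)$.'' It is not: a bound on the Hodge filtration of the underlying filtered $D$-module does not pass through $\imath_x^*$ (nor through the hypercohomology of the cohomology sheaves $\ch^j\phi_f\QQ_{\cx}$) without an argument, since the Hodge filtration on stalk cohomology of a mixed Hodge module is computed via $V$-filtrations with shifts, and the MHSs on $H^i(X_0,\ch^j\phi_f\QQ_{\cx})$ depend on how the variations on the open strata degenerate at the points of $\mathsf{S}_0$. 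This is precisely why the paper routes the argument through the weight graded pieces: by semisimplicity, $\gr^W(\pphi_f\QQ_{\cx}[n+1])\simeq\oplus_\ell\,\IC_{\overline{\mathsf{S}_\ell}}(\cv_\ell)$ with $\cv_\ell$ polarizable VHSs on strata of $\sing(X_0)$; the $k$-log-canonicity hypothesis is converted into $\tilde\alpha_{f,x}\ge k+1$ (Saito; Musta\c{t}\u{a}--Popa), then into the containment $\cv_\ell=F^{k+1}\cv_\ell$ --- at points of the minimal stratum via Saito's bound on the support of the spectrum of the point-supported summand, and along positive-dimensional strata by cutting with transverse hyperplane sections and invoking the restriction theorem for Hodge ideals; finally, a separate log-resolution/logarithmic de Rham argument (the footnote in the paper) shows that stalks and hypercohomology of $\IC_{\overline{\mathsf{S}_\ell}}(\cv_\ell)$ stay in $F^{k+1}$, so all terms of the weight spectral sequences converging to $\tilde H^*(\fF_{f,x})$ and $H^*_{\van}(X_t)$ lie in $F^{k+1}$. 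None of these three ingredients --- the stratified decomposition, the slicing plus restriction theorem, and the control of $F$ on stalks and hypercohomology of the pure pieces --- appears in your proposal, and your appeal to the literature does not supply them in the non-isolated setting.

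The same issue infects your globalization: even granting the stalkwise containment, the $E_2$-terms $H^i(\ch^j\phi_f\QQ_{\cx})$ of the spectral sequence you invoke are not automatically contained in $F^{k+1}$ just because the stalks are; one needs the containment for the global (intersection) cohomology of the strata-wise variations, which is again the footnote argument. The last step --- deducing \eqref{eq-th1.6} from $H^*_{\van}(X_t)\subseteq F^{k+1}$ and the exactness and strictness of \eqref{eq7.0} --- is correct and agrees with the paper.
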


\begin{proof}
Begin by observing that the associated weight-graded of the vanishing cycles MHM is semisimple perverse, so that 
\begin{equation}\label{eq-klc}
\gr^W(\pphi_f \QQ_{\cx}[n+1])\simeq \oplus_{\ell=0}^d \IC_{\overline{\mathsf{S}_{\ell}}}(\cv_{\ell}),
\end{equation}
where $\{\mathsf{S}_{\ell}\}$ is a stratification of $\text{sing}(X_0)$ (with $\ell=\dim(\mathsf{S}_{\ell})$, $d=\dim[\text{sing}(X_0)]\leq n-2$, and $\mathsf{S}_{\ell}$ smooth) and $\cv_{\ell}\to\mathsf{S}_{\ell}$ are some polarizable VHSs.  We claim that they satisfy $\cv_{\ell}=F^{k+1}\cv_{\ell}$; we say that $\cv_{\ell}$ \emph{is contained in} $F^{k+1}$.

Assuming this claim, the MHSs $H^r\imath_x^* \IC_{\overline{\mathsf{S}_{\ell}}}(\cv_{\ell})$ and $\HH^r(X_0,  \IC_{\overline{\mathsf{S}_{\ell}}}(\cv_{\ell}))=\IH^{r+\ell}(\overline{\mathsf{S}_{\ell}},\VV_{\ell})$ are contained in $F^{k+1}$ ($\forall r\in \ZZ$, $x\in X_0$).\footnote{In more detail: if $(\widetilde{\mathsf{S}}, D)\overset{\beta}{\twoheadrightarrow}(\overline{\mathsf{S}_{\ell}},\overline{\mathsf{S}_{\ell}}\setminus \mathsf{S}_{\ell})$ is a log-resolution, and $\widetilde{\mathsf{M}},\mathsf{M}$ are the ($F^{\bullet}$-filtered) $D$-modules underlying $\IC_{\widetilde{\mathsf{S}}}(\cv_{\ell}),\IC_{\overline{\mathsf{S}_{\ell}}}(\cv_{\ell})$, then $\mathrm{DR}^{\bullet}_{\widetilde{\mathsf{S}}}\widetilde{\mathsf{M}}$ is filtered quasi-isomorphic to $(\mathrm{DR}_{\widetilde{\mathsf{S}}}^{\bullet}\widetilde{\mathsf{M}})_{\log}:= \mathrm{DR}_{\widetilde{\mathsf{S}}}^{\bullet}\widetilde{\mathsf{M}} \cap (\omega^{\bullet}_{\widetilde{\mathsf{S}}}(\log D)\otimes \cv_{\ell,e})$ (with $\cv_{\ell,e}$ the canonical extension), cf.~\cite[p.~159]{Sa5}.  Since $\IC_{\overline{\mathsf{S}_{\ell}}}(\cv_{\ell})\subseteq R\beta_* \IC_{\widetilde{\mathsf{S}}}(\cv_{\ell})$ is a direct factor by the Decomposition Theorem, and $(\mathrm{DR}^{\bullet}_{\widetilde{\mathsf{S}}}\widetilde{\mathsf{M}})_{\log} =F^{k+1} (\mathrm{DR}^{\bullet}_{\widetilde{\mathsf{S}}}\widetilde{\mathsf{M}})_{\log}$, the assertions follow. (Alternatively, assuming wolog $\cv_{\ell}$ pure of weight $w_{\ell}$, one can use duality and $F^{w_{\ell}-k}\mathrm{DR}^{\bullet}_{\widetilde{\mathsf{S}}}\widetilde{\mathsf{M}}=\{0\}$.)}  Hence so are all the terms of the spectral sequences ${}^x E_1^{i,j}=H^{i+j} \imath^*_x \gr^W_{-i}\pphi_f \QQ_{\cx}[n+1]$ and  $E_1^{i,j}=\HH^{i+j}(X_0,\gr^W_{-i}\pphi_f\QQ_{\cx}[n+1])$, which converge to $H^{i+j}\imath_x^* \pphi_f \QQ_{\cx}[n+1]\cong \tilde{H}^{i+j+n}(\mathfrak{F}_{f,x})$ and $\HH^{i+j}(X_0,\pphi_f\QQ_{\cx}[n+1])\cong H_{\van}^{i+j+n}(X_t)$ respectively. Conclude that all $\tilde{H}^*(\mathfrak{F}_{f,x})\subset F^{k+1}$ and $H^*_{\van}(X_t)\subset F^{k+1}$, whence \eqref{eq-th1.6} follows from \eqref{eq7.0}.

To prove the claim, let $\tilde{\alpha}_f=\underset{x\in X_0}{\text{min}}\tilde{\alpha}_{f,x}$ denote the (global and local) microlocal log-canonical thresholds of $f$.\footnote{We do not discuss this here; see 
\cite[$\S$1.4]{KLS}.} By \cite[Cor.~2]{Sai16} (see also \cite[Cor.~C]{MP18ii}), the $k$-log-canonicity assumption implies that $k=\lfloor\tilde{\alpha}_f\rfloor -1$, hence that $\tilde{\alpha}_{f,x}\geq k+1$ $\forall x\in X_0$.  Consider any $x\in \mathsf{S}_0$, so that the only subobject of \eqref{eq-klc} supported at $x$ is the $\ell=0$ term $\imath^x_*\cv_{0,x}$ (where $\cv_{0,x}$ is a Hodge structure with $T^{\text{ss}}$-action).  In the the proof of \cite[Prop.~2.3]{Sai17}, Saito shows that its spectrum is supported in $[\tilde{\alpha}_{f,x},n+1-\tilde{\alpha}_{f,x}]$; in particular, $\cv_{0,x}=F^{k+1}\cv_{0,x}$.

Next consider a point $x\in \mathsf{S}_{\ell}$. Taking $\ell$ hyperplane sections of $\cx$ through $x$ (transverse to $\mathsf{S}_{\ell}$), the slice of $X_0$ still has $k$-log-canonical singularities at $x$ by \cite[(0.2)]{MP18i}.  The argument of the last paragraph applied to the slice yields that the spectrum of $\cv_{\ell,x}$ (as a Hodge structure with $T^{\text{ss}}$-action) lies in $[\tilde{\alpha}_{f,x},n+1-\ell-\tilde{\alpha}_{f,x}]$; in particular, we have $\cv_{\ell,x}=F^{k+1}\cv_{\ell,x}$.  So the $\{\cv_{\ell}\}$ are contained in $F^{k+1}$ and the claim is proved.
\end{proof}

\section{The Saito-Siersma-Steenbrink formula}\label{S7.2}

Consider as above a projective morphism $f\colon \cx \to \Delta$ from a smooth total space to the $t$-disk, which is smooth over $\Delta^*$, with $\dim(\text{sing}(X_0))=1$.  (That is, if $X_0$ is nonreduced, then $n=1$.) For any point $p\in\text{sing}(X_0)$, let $V_{f,p}^k :=H^k(\imath_p^* \phi_f \QQ_{\cx})$ be the $k^{\text{th}}$ (reduced) cohomology of the Milnor fiber, which has a $T^{\text{ss}}$-action and MHS (by \cite{Sai90b}) hence (weighted) spectra $\sigma_{f,p}^k , \tilde{\sigma}_{f,p}^k$ as in Definition \ref{def-spectrum}. On the open stratum $\sfs_1\subset\text{sing}(X_0)$,\footnote{This may be smaller than the smooth part of $\text{sing}(X_0)$; for instance, the pinch points studied below belong to $\sfs_0$ but also the smooth part of $\text{sing}(X_0)$.} these are nonzero only for $k=n-1$, and yield a VMHS $\cv_f^{n-1}$. The formula we now describe concerns the finite set $\sfs_0 = \text{sing}(X_0)\setminus \sfs_1$ of ``bad'' points, at which $V_{f,p}^{n-1}$ and $V_{f,p}^n$ may both be nonzero.

Restricting to a neighborhood $U\subset \cx$ of $p$, which (by choosing holomorphic coordinates $z_1,\ldots,z_{n+1}$ at $p$) we regard as an open ball in $\CC^{n+1}$, denote the restriction of $f$ by $\ff\colon U\to \Delta_t$. (In this context, we shall also denote $p$ by $\uo$.). Writing $Z=U\cap \text{sing}(X_0)=\cup_i Z_i$ as a union of (analytic) irreducible components, we assume that $U$ is chosen sufficiently small that (for each $i$) $Z_i \cap \sfs_0 = \{\uo\}$ and $Z_i^* :=Z_i\setminus \{\uo\}$ [resp. the normalization $\tilde{Z}_i$] is a punctured disk [resp. disk]. Let $\fg$ be the restriction to $U$ of a linear form on $\CC^{n+1}$ whose further restriction to each $Z_i$ is finite --- yielding a diagram
$$
\xymatrix{Z_i \ar [r]^{\fg_i} & \Delta_s \\ \tilde{Z}_i \ar [r]^{\tilde{\fg}_i}_{\cong} \ar @{->>} [u] & \Delta_{\tilde{s}_i} \ar [u]_{(\cdot)^{\mu_i}}}
$$
--- and assume in addition that the critical locus $\text{sing}(f|_{\fg^{-1}(0)})=\{\uo\}$.\footnote{Equivalently, the critical locus of $\pi$ intersects $\pi^{-1}(\Delta_t\times\{0\})$ only in $\{\uo\}$.} Together with $\ff$, this yields a holomorphic map $\pi=(\ff,\fg)\colon U\to \Delta^2_{t,s}$ to the bi-disk with discriminant locus $\Lambda = \cup_s \pi\{\text{sing}(\ff|_{\fg^{-1}(s)})\}$.  Each irreducible curve $C\subset \Lambda^{\circ}:=\overline{\Lambda \cap (\Delta^*)^2}$ has a Puiseux expansion $t=\gamma_C \cdot s^{\mathfrak{r}_C} + \{\text{higher-order terms}\}$, and we set $$\mathfrak{r}:=\max_{C\subset \Lambda^{\circ}}\{0,\mathfrak{r}_C\}\in \QQ_{\geq 0}.$$ For every $r\in \mathbb{N}$ with $r>\mathfrak{r}$, the Yomdin deformation $\ff+\fg^r$ has an \emph{isolated} singularity at $\uo$; in fact, we can even take $r=\mathfrak{r}$ provided none of the curves $C$ have $\mathfrak{r}_C=\mathfrak{r}$ and $\gamma_C = -1$ \cite{Sa-steen}.
\begin{figure}
\center{\includegraphics[scale=0.6]{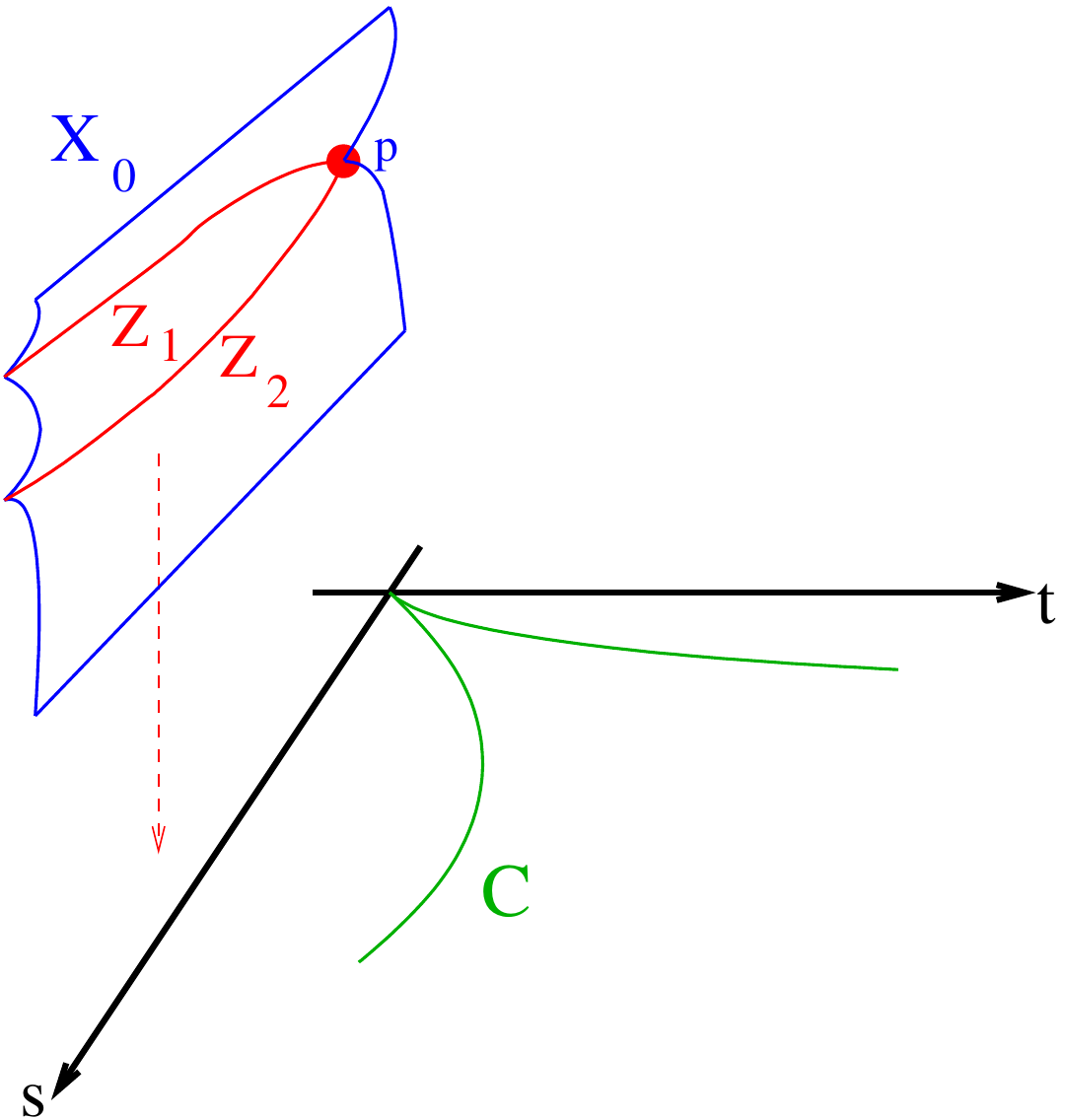}}
\caption{Each $\fg_i$ has finite degree $\mu_i$.}
\end{figure}

Writing $\cv_i$ for the restriction of $\cv_f^{n-1}$ to $Z_i^* \cong \tilde{Z}_i \setminus \{\uo\}$, its LMHS 
\begin{equation}\label{eq-vertlim}
\cv_i^{\lm}:=\psi_{\tilde{\fg}_i}\cv_i = \psi_{\tilde{\fg}_i}\ch^{-1}(\pphi_{\ff}\QQ_U [n+1])|_{Z_i^*}
\end{equation}
has commuting actions of ``horizontal'' monodromy $T_i$ (associated to $\phi_{\ff}$) and ``vertical'' mondromy $\tau_i$ (associated to $\psi_{\tilde{\fg}_i}$). Let $\tilde{\sigma}_{\lm,i}^{n-1} = \sum_j [(\alpha_{ij},w_{ij})]$ be the weighted spectrum of $(\cv^{\lm}_i,T_i^{\text{ss}})$; that is, we have a basis $\{v_{ij}\}\subset \cv^{\lm}_i \otimes \CC$ such that $v_{ij}\in (\cv^{\lm}_i)^{\lfloor\alpha_{ij}\rfloor,w_{ij}-\lfloor \alpha_{ij}\rfloor}$ and $T^{\text{ss}}_i v_{ij} = e^{2\pi\sqrt{-1}\alpha_{ij}}v_{ij}$. Further, we may choose these $\{ v_{ij}\}$ to be a simultaneous eigenbasis for $\tau_i^{\text{ss}}$, with eigenvalues $e^{2\pi\sqrt{-1}\beta_{ij}}$ for some $\{\beta_{ij}\}\subset [0,1)$.  Write formally $$\tilde{\tau}^0_{\lm,i}:=\sum_j \sum_{k=0}^{\mu_i r -1} [( \tfrac{\beta_{ij}+k}{\mu_i r},0 )]$$ for the corresponding ``spectra''; and define spectral ``convolutions'' by 
\begin{multline*}
\tilde{\sigma}_{\lm,i}^{n-1} \circledast \tilde{\tau}^0_{\lm,i} := \textstyle\sum_{j,k} [(\alpha_{ij},w_{ij})*(\tfrac{\beta_{ij}+k}{\mu_i r},0)] 
\\
\mspace{50mu} \text{resp.}\mspace{50mu}\sigma_{\lm,i}^{n-1}\circledast \tau_{\lm,i}^0 := \textstyle\sum_{j,k}[\alpha_{ij}+\tfrac{\beta_{ij}+k}{\mu_i r}],
\end{multline*}
where ``$*$'' is given\footnote{This is the same operation on weighted spectra as in \S\textbf{II.6} on the Sebastiani-Thom formula.} by $(\alpha,w)*(\beta,\omega):=(\alpha+\beta,w+\omega+\langle \alpha\mid\beta\rangle)$ with \begin{equation*}
\langle\alpha\mid\beta\rangle := \left\{ \begin{array}{cc} 0 & \alpha\text{ or }\beta\in \ZZ \\ 1 & \alpha,\beta,\alpha+\beta\notin \ZZ \\ 2 & \alpha,\beta\notin\ZZ,\, \alpha+\beta\in\ZZ. \end{array} \right.
\end{equation*}

\begin{thm}[SSS formula]\label{th-sss}
\textup{(i)}  With notations and assumptions as above, we have the equality of spectra
\begin{equation}\label{eqsss1}
\sigma_{f,p}^n - \sigma_{f,p}^{n-1} = \sigma_{\ff+\fg^r,\uo}^n - \sum_i \sigma_{\lm,i}^{n-1}\circledast \tau_{\lm,i}^0 .
\end{equation}
\textup{(ii)} If $r>\mathfrak{r}$ then the corresponding equality $\widetilde{\eqref{eqsss1}}$ of \emph{weighted} spectra holds.
\end{thm}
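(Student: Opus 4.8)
The plan is to reduce the global statement to the Yomdin–Lê fibration setup and the Thom–Sebastiani type formula from \S\textbf{II.6}, by comparing $\ff$ with its Yomdin deformation $\ff_r:=\ff+\fg^r$ along the family $\ff+u\fg^r$ ($u\in\Delta$). First I would set up the two-parameter picture: consider $F:=(\ff+u\fg^r,u)\colon U\times\Delta_u\to \Delta_t\times\Delta_u$, a $\mu$-constant-type deformation away from $u=0$, so that for $u\neq 0$ the total Milnor number (counted with multiplicity) of the central fiber is that of the isolated singularity $\ff_r$ at $\uo$. The difference between $u\neq 0$ and $u=0$ is concentrated along the curves of $\Lambda$ which "escape to the boundary'' — precisely those $C$ with $\mathfrak r_C\leq r$ — and at $\uo$ along each branch $Z_i$ the vanishing cohomology $\cv_i$ gets absorbed. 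The mechanism for this is the Iomdine–Lê / Siersma formula realized at the level of vanishing-cycle mixed Hodge modules: on a small disk transverse to $Z_i^*$, the local system $\cv_i$ is "rotated'' $\mu_i r$ times (the degree of $\fg^r$ restricted to $\tilde Z_i$) and glued in, which on spectra is exactly the operation $\circledast$ with $\tilde\tau^0_{\lm,i}$ — the $k$-sum over $k=0,\dots,\mu_i r-1$ records the $\mu_i r$ sheets and the shift by $\tfrac{\beta_{ij}+k}{\mu_i r}$ records the fractional twist of the vertical monodromy eigenvalue. This is the content I would attribute to \cite{Sa-steen} and \cite{Si90}, repackaged on weighted spectra using Definition \ref{def-spectrum}.

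The key steps, in order: (1) Express $\sigma^n_{f,p}-\sigma^{n-1}_{f,p}$ as the Euler-characteristic-type spectrum $\sum_k(-1)^{k-n+1}\sigma^k_{f,p}$ (only $k=n-1,n$ contribute, by Theorem \textbf{I.5.5(ii)}), and similarly $\sigma^n_{\ff_r,\uo}$ for the isolated deformation; this turns the claimed identity into an equality of "motivic'' spectra, which is additive in distinguished triangles of MHM. (2) Run the nearby/vanishing cycle functor in the $u$-direction for $F$: write $\psi_u\phi_{\ff+u\fg^r}\QQ$ as an extension built from $\phi_{\ff_r}\QQ$ at $\uo$ and the contributions along each $Z_i^*$; the latter is $\psi_{\tilde\fg_i}$ applied to $\cv_i$ pulled back under the $\mu_i r$-fold cover $\tilde s_i\mapsto\tilde s_i^{\mu_i r}$ induced by $\fg^r|_{\tilde Z_i}$, whence the commuting actions $T_i,\tau_i$ and the appearance of $\cv_i^{\lm}$ as in \eqref{eq-vertlim}. (3) Translate the finite-cover pullback into the $\circledast$-convolution on weighted spectra, using the same computation as the Sebastiani–Thom formula in \S\textbf{II.6}: a degree-$m$ cyclic cover sends a spectral datum $(\alpha,w)$ with vertical eigenvalue $e^{2\pi i\beta}$ to $\sum_{k=0}^{m-1}(\alpha,w)*(\tfrac{\beta+k}{m},0)$, the pairing $\langle\cdot\mid\cdot\rangle$ accounting for weight jumps when eigenvalues collide at integers. (4) Assemble (2)+(3), take weighted-spectrum Euler characteristics, and cancel the $\psi_u$-vs-$\phi_{\ff}$ terms to obtain $\widetilde{\eqref{eqsss1}}$; then specialize ($w$-forget) to get (i), where the hypothesis $r>\mathfrak r$ is not needed because the un-weighted identity is insensitive to the weight-filtration subtleties that force $r>\mathfrak r$ in (ii).

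The main obstacle I expect is step (2)–(3): making the mixed-Hodge-module identity precise enough that the weight filtration is tracked correctly through the $u$-direction nearby-cycle functor and the finite base change. Concretely, one must show that $\psi_{\tilde\fg_i}$ of the $\mu_i r$-fold pullback of $\cv_i$ carries exactly the weighted spectrum $\tilde\sigma^{n-1}_{\lm,i}\circledast\tilde\tau^0_{\lm,i}$ — i.e. that the vertical monodromy $\tau_i$ and the covering interact as a genuine (shifted) Sebastiani–Thom situation and not merely up to a filtered quasi-isomorphism. This is where the condition $r>\mathfrak r$ (equivalently: the Yomdin deformation is genuinely isolated and the curves $C$ meet the axes transversally enough) is used: it guarantees that no curve $C$ contributes a "resonant'' branch with $\mathfrak r_C=\mathfrak r$, $\gamma_C=-1$, which would otherwise produce an extra extension in the weight filtration and break the clean convolution formula. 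The un-weighted assertion (i) should then follow by forgetting $w$, since dimensions of $\gr_F$-eigenspaces are additive regardless of these extensions. I would lean on \cite{Sa-steen} for the hard analytic input (that $F$ has the asserted vanishing-cycle structure) and supply the Hodge-theoretic bookkeeping via the $\circledast$-calculus already developed in Part II.
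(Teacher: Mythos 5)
Your strategy (deform via $\ff+u\fg^r$, compare $u=0$ with $u\neq 0$ by nearby cycles in the auxiliary parameter $u$, and convert the contributions along the $Z_i^*$ into the convolutions) is a reasonable heuristic, but it has a genuine gap at exactly the point you flag and then defer. The assertion in your step (2) --- that $\psi_u\phi_{\ff+u\fg^r}\QQ$ decomposes, \emph{with its weight filtration}, into $\phi_{\ff+\fg^r}\QQ$ at $\uo$ plus pieces along each $Z_i^*$ whose weighted spectra are $\tilde{\sigma}^{n-1}_{\lm,i}\circledast\tilde{\tau}^0_{\lm,i}$ --- is essentially the theorem itself in sheaf-theoretic form, and you cannot quote \cite{Sa-steen} for it: Saito does not study your two-parameter family $F=(\ff+u\fg^r,u)$. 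His argument (which is the one the paper sketches) compactifies $\pi=(\ff,\fg)$ fiberwise to $\bar\pi\colon\bar\cx\to\Delta^2$, composes with $\rho(s,t)=(s^r,t)$, sets $\mathcal M=\pch^0R(\rho\circ\bar\pi)_*\QQ_{\cx}[n+1]$, and applies a key lemma on the bidisk comparing $\sigma(\imath^*_{\uo}\pphi_{s+t}\mathcal M)$ with $\sigma(\imath^*_{\uo}\pphi_t\mathcal M)$ when the discriminant is tangent to the $t$-axis; the convolution terms arise because, away from the origin of the $s$-axis, $\pphi_t\mathcal M$ is the finite \emph{direct image} of the $\cv_i$ under the degree-$\mu_ir$ maps $Z_i^*\to\Delta^*$ (direct image, not pullback as you write: it is pushforward under $z\mapsto z^{m}$ that replaces an eigenvalue $\mathbf{e}(\beta)$ by the $m$ values $\mathbf{e}(\tfrac{\beta+k}{m})$). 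So either you must reprove the analogue of that lemma for your $u$-family, including the weight bookkeeping that you yourself identify as the hard part, or you must translate your picture into Saito's; as written, the citation does not fill the hole.

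Two further points. Your account of where $r>\mathfrak r$ enters (ii) is off: the resonance condition ($\mathfrak r_C=\mathfrak r$, $\gamma_C=-1$) governs whether $\ff+\fg^{\mathfrak r}$ has an isolated singularity at all, i.e.\ whether the formula even makes sense at $r=\mathfrak r$, and is already relevant for (i); the weighted statement instead fails at $r=\mathfrak r$ because of a limiting step in Saito's proof (just after (4.6.2) of \cite{Sa-steen}) which preserves unweighted but not weighted spectra --- geometrically, $r>\mathfrak r$ is what makes the transformed discriminant genuinely tangent to the $t$-axis so the bidisk lemma applies on the nose. Relatedly, deducing (i) by ``forgetting $w$'' from (ii) only gives (i) for $r>\mathfrak r$, whereas (i) also covers the non-resonant $r=\mathfrak r$ case, which requires that separate limit argument. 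Finally, when you do run your step (3), it must be with the limit variation $\cv_i^{\lm}$ (including any shift of weights by $N_{\tau}$), not the fiberwise $\cv_i$; you say this in passing, but it is precisely where naive bookkeeping gives the wrong answer, cf.\ Example \ref{ex-key}.
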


\begin{proof}[Sketch]
Though only (i) is stated in \cite{Sa-steen}, Saito's proof actually establishes (ii) as well (which only fails for $r=\mathfrak{r}$ due to the limit taken just after [op. cit.,(4.6.2)]).  The main step in his proof is to establish the following
\begin{lem}[\cite{Sa-steen}, Thm 2.5]
Given $\mathcal{M}\in\mathrm{MHM}(\Delta^2_{s,t})$ smooth over $(\Delta^*)^2\setminus \Lambda^{\circ}$, with $\Lambda^{\circ}$ tangent to the $t$-axis at $(0,0)$.  Then we have the equality of spectra\footnote{For $V^{\bullet}\in D^b\mathrm{MHS}$ with an action by $T^{\text{ss}}$, $\sigma(V^{\bullet})\in \ZZ[\QQ]$ means the alternating sum $\sum_m (-1)^m \sigma(V^m)$.}
\begin{equation}\label{eq!L}
\sigma(\imath^*_{\uo}\pphi_{s+t}\mathcal{M}) - \sigma(\imath^*_{\uo}\pphi_t \mathcal{M})= \sum_{\ell}[\alpha_{\ell}^{\mathcal{M}}+\beta_{\ell}^{\mathcal{M}}]	
\end{equation}
for the monodromy $T$ about $t=0$, where $\sum_{\ell}[\alpha^{\mathcal{M}}_{\ell}]=\sigma(\ppsi_s \pphi_t\mathcal{M})$ and the $\beta_{\ell}^{\mathcal{M}}\in [0,1)$ are $\tfrac{\log(\cdot)}{2\pi\sqrt{-1}}$ of the eigenvalues of monodromy $\tau$ about $s=0$.
\end{lem}
\noindent (The argument is similar to our proof of Theorem \textbf{II.6.1}.)  With this in hand, the assumption that $\text{sing}(\fg^{-1}_i(0))=\{\uo\}$ guarantees a suitable projective fiberwise compactification of $\pi$ to 
$\bar{\pi}\colon \bar{\cx}\to \Delta^2$, and defining $\rho\colon \Delta^2 \to \Delta^2$ by $(s,t)\mapsto (s^r,t)$, we set 
$$\mathcal{M}\,:=\, \pch^0 R(\rho\circ \bar{\pi})_* \QQ_{\cx}[n+1].$$
By finiteness of $Z$ over the $s$-axis, we then have (for $\mathsf{h}=s+t$ resp. $t$ and $\mathsf{H}=\fg^r + \ff$ resp. $\ff$) the middle equality of
\begin{equation*}
     \begin{split}
     \imath^*_{(0,0)} \pphi_{\mathsf{h}}\mathcal{M} 
     &= \imath^*_{(0,0)}\pch^0 R(\rho\circ \bar{\pi})_* \pphi_{\mathsf{H}}\QQ_{\cx}[n+1] 
     \\
     &= \imath^*_{(0,0)}R(\rho\circ\bar{\pi})_* \pphi_{\mathsf{H}}\QQ_{\bar{\cx}}[n+1] = \imath^*_{\uo}\pphi_{\mathsf{H}}\QQ_{\bar{\cx}}[n+1].
     \end{split}
\end{equation*}
Plugging this in to the LHS of \eqref{eq!L} gives $\sigma^n_{\ff+\fg^r,\uo}-(\sigma_{f,p}^n - \sigma_{f,p}^{n-1})$. The $\{\alpha_{\ell}^{\mathcal{M}},\beta_{\ell}^{\mathcal{M}}\}$ come from taking push-forwards of the various $\cv_i$ under the finite maps $\Delta^*\cong Z^*_i \overset{\rho\circ\bar{\pi}}{\longrightarrow}\Delta^*$ of degree $\mu_i r$, whereupon each pair $\{\alpha_{ij},\beta_{ij}\}$ becomes $\bigcup_{k=0}^{\mu_i r -1} \{\alpha_{ij},\tfrac{\beta_{ij}+k}{\mu_i r}\}$.
\end{proof}

\begin{rem}
Combining Theorem \ref{th-sss}(i) with toric geometry techniques ($\S$\textbf{II.5}) yields a combinatorial formula for LHS\eqref{eqsss1} in the (nonisolated) Newton-nondegenerate simplicial case; cf. \cite{JKSY2} ($n=3$) and \cite{Sai20} ($n$ arbitrary), which appeared while an earlier version of this paper was being prepared.  While this formula applies in principle to some of the examples below, it does not yield the weighted spectra or appear that it would simplify the computations.
\end{rem}

Applying part (ii) of Theorem \ref{th-sss} can be a little tricky in practice, as one needs to correctly compute the ``vertical'' LMHSs \eqref{eq-vertlim}.  Here is an instructive

\begin{example}\label{ex-key}
Suppose $f$ has local form $\ff=x^2y^2+(x^4+y^4)z+x^5+y^5$, where we have relabeled $(z_1,z_2,z_3)=:(x,y,z)$.  Then $Z$ is the portion of the $z$-axis inside the small ball $U$ about $\uo$, and taking $\fg=z$ yields $\mathfrak{r}=5$; so $r=7$ will work.\footnote{Since there is only one component of $Z$ and it is normal, the sum over $i$ is dropped, and $\mu_1=1$. Taking $r$ odd (we could have taken $r=6$) simplifies the computation.}  The VMHS $\cv$ on $Z^* \underset{\fg}{\overset{\cong}\to} \Delta^*$ is given, at each $s\in \Delta^*$, by the vanishing-cycles MHS on $H^1$ of the Milnor fiber of $\ff_s=x^2y^2+(x^4+y^4)s+x^5+y^5$, which is a curve singularity of ordinary quadruple-point type.  The weighted spectrum and Hodge-Deligne diagram are thus
\begin{equation}\label{eq-Vfs}
\pgfmathsetmacro\MathAxis{height("$\vcenter{}$")}
\begin{matrix}\tilde{\sigma}^1_{\ff_s}=[(\tfrac{1}{2},1)]+2[(\tfrac{3}{4},1)]+3[(1,2)]\\[0.5em] \mspace{40mu}+2[(\tfrac{5}{4},1)]+[(\tfrac{3}{2},1)]\end{matrix}
\mspace{50mu}
\begin{tikzpicture}[scale=0.8,baseline={(0, 0.5cm+\MathAxis pt)}]
\draw [thick,gray,<->] (0,2) -- (0,0) -- (2,0);
\node at (2.1,0) {\tiny $p$};
\node at (0.2,1.9) {\tiny $q$};
\filldraw (1.3,1.3) circle (3pt);
\filldraw (0,1.3) circle (3pt);
\filldraw (1.3,0) circle (3pt);
\node at (1.6,1.3) {\tiny $3$};
\node at (0.3,1.3) {\tiny $3$};
\node at (1.6,0.2) {\tiny $3$};
\end{tikzpicture}
\cv
\mspace{50mu}
\end{equation}
Note that the fractional parts of the first entries tell us the eigenvalues of the \emph{horizontal} monodromy $T^{\text{ss}}$.

To compute its LMHS $\cv^{\lim}$ at $s=0$, we must identify the action of vertical monodromy $\tau=\tau^{\text{ss}}e^{N_{\tau}}$ on $\cv$.  For this purpose, we can drop the $x^5+y^5$ part of $\ff_s$ and look at the ``tail'', consisting of $\cE_s:=\{(x^4+y^4)s+x^2y^2=w^4\}\subset \PP^2$ minus the 4 points $E_s:=\cE_s\cap \{w=0\}$; recall from \S\textbf{II.2} that $H^1(\cE_s\setminus E_s)\cong \cv_s$.  The three $(1,1)$-classes in \eqref{eq-Vfs} correspond to degree-0 divisors supported on $E_s$, on which $\tau$ has eigenvalues $-1$ (mult.~2) and $1$ (mult.~1).  The compact part $\cE_s$ splits into two irreducible components meeting in two tacnodes at $s=0$, which allows to calculate its LMHS using the vanishing-cycle sequence.  In fact, since the action of the automorphism $\mu\colon w\mapsto \mathbf{i}w$ on $H^1(\cE_s)$ encodes $T^{\text{ss}}$, we can use the ``eigenspectrum'' calculus of \cite{CDKP} to find a simultaneous Jordan basis for $T$ and $\tau$.  This yields, for comparison to \eqref{eq-Vfs},
\begin{equation}\label{eq-Vflim}
\pgfmathsetmacro\MathAxis{height("$\vcenter{}$")}
\begin{matrix}\tilde{\sigma}^1_{\lim}={\color{blue}[(\tfrac{1}{2},0)]}+2[(\tfrac{3}{4},1)]+3[(1,2)]\\[0.5em] \mspace{40mu}+2[(\tfrac{5}{4},1)]+{\color{blue}[(\tfrac{3}{2},2)]}\end{matrix}
\mspace{30mu}
\begin{tikzpicture}[scale=0.8,baseline={(0, 0.5cm+\MathAxis pt)}]
\draw [thick,gray,<->] (0,2) -- (0,0) -- (2,0);
\node at (2.1,0) {\tiny $p$};
\node at (0.2,1.9) {\tiny $q$};
\filldraw (1.45,1.15) circle (3pt);
\filldraw (0,1.3) circle (3pt);
\filldraw (1.3,0) circle (3pt);
\filldraw [blue] (0,0) circle (3pt);
\filldraw [blue] (1.3,1.3) circle (3pt);
\node at (1.7,1.15) {\tiny $3$};
\node at (0.3,1.3) {\tiny $2$};
\node at (1.6,0.2) {\tiny $2$};
\node [blue] at (1,1.5) {\tiny $1$};
\node [blue] at (-0.3,0.2) {\tiny $1$};
\draw [thick,blue,->] (1.1,1.1) -- (0.2,0.2);
\node [blue] at (0.9,0.5) {\tiny $N_{\tau}$};
\end{tikzpicture}
\cv^{\lim}
\mspace{30mu}
\end{equation}
More precisely, the full ``eigenspectrum'' $\sum_{i=1}^9 [(\alpha_j,w_j,\beta_j)]$ (notation defined after \eqref{eq-vertlim}) is
\begin{equation*}
[(\tfrac{1}{2},0,0)]+2[(\tfrac{3}{4},1,\tfrac{3}{4})]+[(1,2,0)]+2[(1,2,\tfrac{1}{2})]\\ +2[(\tfrac{5}{4},1,\tfrac{1}{4})]+[(\tfrac{3}{2},2,0)],
\end{equation*}
in which $N_{\tau}$ ``connects'' the first and last terms.  From this, we can read off the spectral convolution $\tilde{\sigma}^1_{\lim}\circledast \tilde{\tau}^0_{\lim}=$
\begin{equation}\label{eq-spcon}
\begin{split}
[(\tfrac{1}{2},0)]&+ {\color{blue}[(\tfrac{9}{14},1)]+[(\tfrac{11}{14},1)]}+2[(\tfrac{6}{7},2)]+{\color{blue}[(\tfrac{13}{14},1)]}+[(1,2)]\\
&+2[(1,3)]+{\color{blue}[(\tfrac{15}{14},1)]}+2[(\tfrac{15}{14},2)]+3[(\tfrac{8}{7},2)]+{\color{blue}[(\tfrac{17}{14},1)]}\\
&+2[(\tfrac{17}{14},2)]+5[(\tfrac{9}{7},2)]+{\color{blue}[(\tfrac{19}{14},1)]}+2[(\tfrac{19}{14},2)]+5[(\tfrac{10}{7},2)]\\
&+3[(\tfrac{3}{2},2)]+5[(\tfrac{11}{7},2)]+2[(\tfrac{23}{14},2)]+{\color{blue}[(\tfrac{23}{14},3)]}+5[(\tfrac{12}{7},2)]\\
&+2[(\tfrac{25}{14},2)]+{\color{blue}[(\tfrac{25}{14},3)]}+3[(\tfrac{13}{7},2)]+2[\tfrac{27}{14},2)]+{\color{blue}[(\tfrac{27}{14},3)]}
\\
&+2[(2,3)]+{\color{blue}[(\tfrac{29}{14},3)]}+2[(\tfrac{15}{7},2)]+{\color{blue}[(\tfrac{31}{14},3)]+[(\tfrac{33}{14},3)]}.
\end{split}	
\end{equation}
Had we (incorrectly) used \eqref{eq-Vfs} instead of \eqref{eq-Vflim}, the weights in the blue terms of \eqref{eq-spcon} would all have been $2$.\footnote{In addition, $[(\tfrac{1}{2},0)]$ would have been $[(\tfrac{1}{2},1)]$, and one of the $[(\tfrac{3}{2},2)]$'s would have been $[(\tfrac{3}{2},1)]$.}

The weighted spectrum of the Yomdin deformation $\ff+\fg^7=x^2y^2+(x^4+y^4)z+x^5+y^5+z^7$ can be computed by the combinatorial methods of \S\textbf{II.5.1}:\footnote{In particular, Theorem \textbf{II.5.4} yields $N$-strings in its vanishing cohomology from $(p,q)=(1,2)$ to $(0,1)$ with $T^{\text{ss}}$-eigenvalues $\{e^{2\pi\mathbf{i}(\frac{1}{2}+\frac{k}{7})}\}_{k=1}^6$, arising from the edge in the Newton polygon from $(0,0,7)$ to $(2,2,0)$; notice that the weights are $1$ and $3$.  These and their conjugates are what will cancel with the blue terms of \eqref{eq-spcon}.}
we have $\tilde{\sigma}^2_{\ff+\fg^7,\uo}=$
\begin{equation*}
\begin{split}
&\phantom{+\;}[(\tfrac{9}{14},1)]+[(\tfrac{11}{14},1)]+2[(\tfrac{6}{7},2)]+[(\tfrac{13}{14},1)]\\
&+2[(1,3)]+[(\tfrac{15}{14},1)]+2[(\tfrac{15}{14},2)]+3[(\tfrac{8}{7},2)]+[(\tfrac{17}{14},1)]\\
&+2[(\tfrac{17}{14},2)]+5[(\tfrac{9}{7},2)]+[(\tfrac{19}{14},1)]+2[(\tfrac{19}{14},2)]+5[(\tfrac{10}{7},2)]\\
&+{\color{red}4}[(\tfrac{3}{2},2)]+5[(\tfrac{11}{7},2)]+2[(\tfrac{23}{14},2)]+[(\tfrac{23}{14},3)]+5[(\tfrac{12}{7},2)]\\
&+2[(\tfrac{25}{14},2)]+[(\tfrac{25}{14},3)]+3[(\tfrac{13}{7},2)]+2[\tfrac{27}{14},2)]+[(\tfrac{27}{14},3)]
\\
&+2[(2,3)]+[(\tfrac{29}{14},3)]+2[(\tfrac{15}{7},2)]+[(\tfrac{31}{14},3)]+[(\tfrac{33}{14},3)].
\end{split}	
\end{equation*}
Applying the SSS formula, we get
\begin{equation}\label{eq-saitoex}
\begin{split}
\tilde{\sigma}^2_{f,\uo}-\tilde{\sigma}^1_{f,\uo}&=\tilde{\sigma}^2_{\ff+\fg^7}-\tilde{\sigma}^1_{\lim}\circledast \tilde{\tau}^0_{\lim}\\
&=[(\tfrac{3}{2},2)]-[(\tfrac{1}{2},0)]-[(1,2)],
\end{split}
\end{equation}
in which taking the limit in \eqref{eq-Vflim} was evidently crucial in canceling the fractional terms. (After all, the spectrum of $f$ should be independent of the choice of $r$!)  With a bit more work, one can show that $\text{rk}(H^1(\mathfrak{F}_{f,\uo}))=2$ so that there is no cancelation in RHS\eqref{eq-saitoex}.
\end{example}

Before turning to further computations, we should mention one other tool which is sometimes useful in determining whether there is cancellation on the LHS of \eqref{eqsss1}.  If $\mathscr{F}$ resp. $\mathscr{G}$ are germs of analytic functions on $\CC^{m+1}$ resp. $\CC^{n+1}$, with Milnor fibers $\mathfrak{F}_{\mathscr{F}}, \mathfrak{F}_{\mathscr{G}}$ and join $\mathscr{F}\oplus\mathscr{G}$, then we have (as vector spaces)
\begin{equation}\label{eq-ST*}
H^{k+1}(\mathfrak{F}_{\mathscr{F}\oplus\mathscr{G}})\cong \oplus_{i+j=k}\tilde{H}^i (\mathfrak{F}_{\mathscr{F}})\otimes \tilde{H}^j(\mathfrak{F}_{\mathscr{G}})	
\end{equation}
with monodromy $T$ on the left induced by $T_1\otimes T_2$ on the right. See \cite{Sakamoto74} and \cite{Nemethi91}.

\section{Some non-isolated slc surface singularities}\label{S7.3}

One of the main motivations of our study is to use Hodge theory to understand KSBA compactifications (see \cite{Kollar-book}) for surfaces of general type as proposed by Griffiths and his collaborators (e.g. \cite{Griffiths}). The singularities occurring for varieties at the boundary of the KSBA compactification are semi-log-canonical (slc) and consequently Du Bois (cf. \cite{KK}). Thus, as discussed above, there is a tight connection between the frontier of the Hodge diamond of $X_0$ and that of the LMHS.  

By \eqref{eq7.0}, the remaining ingredient for determining the form of $H^*_{\lim}$ is the vanishing cohomology.  Here one needs more detailed input from the singularity types of $X_0$.  Fortunately, in the case of 2-dimensional slc hypersurface singularities, an explicit classification exists (cf. \cite{ksb}, \cite{LR}), and we can use Theorem \ref{th-sss} (with $n=2$) to determine $\sigma_{f,p}^*$ and thus $H^*_{\van}(X_t)$.  The spectra are summarized in Table \ref{T-X}.  We shall first describe briefly how they are computed, then how to get from them to $H^*_{\van}$, and finally apply the results to degenerations of $K3$ surfaces.


All of the singularities in Table \ref{T-X} have in common that on the open stratum $\mathsf{S}_1 \subset \text{sing}(X_0)$, the singularity type is $A_{\infty}$ (locally analytically, two components crossing normally along a curve).  Hence $\cv^1_f \cong \cl(-1)$ for $\cl$ a rank-one, type $(0,0)$ isotrivial VHS on $\mathsf{S}_1$ with (vertical) monodromies $\pm 1$. It follows that all sums over $j$ (and in most cases, over $i$) disappear, all $(\alpha_i,w_i)=(1,2)$, and all $\beta_i = 0$ or $\tfrac{1}{2}$; in Table \ref{T-X}, $\beta=\tfrac{1}{2}$ occurs only for the pinch point $D_{\infty}$.  Moreover, the local form of $f$ at each $p\in \mathsf{S}_0$ is such that $Z$ is a union of coordinate axes, and so all $\mu_i =1$.

Accordingly, the SSS formula reads (for $r>\mathfrak{r}$)
\begin{equation}\label{eqsss2}
\tilde{\sigma}_{f,p}^2 = \tilde{\sigma}_{f,p}^1 + \tilde{\sigma}_{\ff+\fg^r,\uo}-\sum_i\sum_{k=0}^{r-1}[(1+\tfrac{\beta_i + k}{r},2)].\end{equation}
One can readily compute $\tilde{\sigma}_{\ff+\fg^r,\uo}^2$ with SINGULAR (which we used for some entries in Table \ref{T-X}), or by hand (see the $J_{k,\infty}$ example below). So on the RHS of \eqref{eqsss2}, this leaves $\tilde{\sigma}^1_{f,p}$, i.e. the computation of $V^1_{f,p}\cong H^1(\mathfrak{F}_{f,p})$ as a MHS and $T^{\text{ss}}$-module. For $T_{\infty,\infty,\infty}$, $\S$\ref{S7.1} gives $V^1_{f,p}\cong \QQ(-1)^{\oplus 2}$; and we can directly show $\text{rk}(V^1_{f,p})=1$ [resp. $0$] for $T_{p,\infty,\infty}$ [resp. $T_{p,q,\infty}$] by fibering $\mathfrak{F}_{f,p}$ over the $x$-coordinate.  In other cases, $f=x^2 + F(y,z)$ is a suspension and \eqref{eq-ST*} yields $V^1_{f,p} = \tilde{H}^0 (\mathfrak{F}_{x^2})\otimes \tilde{H}^0(\mathfrak{F}_{F,\uo})$. If $\mathfrak{F}_{F,\uo}$ is connected ($D_{\infty},T_{2,q,\infty},J_{k,\infty}$), this is zero; while for $T_{2,\infty,\infty}$, $\mathfrak{F}_{F,\uo}$ has 2 components so that $V^1_{f,p}$ has rank $1$, and $T=T_1\otimes T_2$ acts by $(-1)^2=1$.\footnote{For this and the other rank 1 case ($T_{p,\infty,\infty}$), an easy way to deduce that $\tilde{\sigma}^1_{f,p}=[(1,2)]$ ($\implies V^1_{f,p}\cong \QQ(-1)$) is from the need to cancel a $-[(1,2)]$ in the third term of RHS\eqref{eqsss2} (that is not cancelled by the second term).}

\small
\begin{table}[ht]\SMALL
\caption{Nonisolated slc hypersurface singularities ($n=2$)}
\centering
\begin{tabular}{c|c|c|c|c}
symbol & local form of $f$ & $\fg,\mathfrak{r},N$ & $\tilde{\sigma}^1_{f,p}$ &  $\tilde{\sigma}^2_{f,p}$ \\ [0.5ex]
\hline 
&&&& \\
$A_{\infty}$ & $x^2+y^2$ & $z,0,1$ & $[(1,2)]$ & $0$ \\ &&&& \\
$D_{\infty}$ & $x^2 + y^2 z$ & $z-y,3,1$ & $0$ & $[(\tfrac{3}{2},2)]$ \\ &&&& \\
$T_{2,\infty,\infty}$ & $x^2+y^2z^2$ & $z-y,4,2$ & $[(1,2)]$ & $[(\tfrac{3}{2},2)]+[(2,4)]$ \\ &&&& \\
$T_{2,q,\infty}$ & $x^2+y^2z^2+y^q$ & $z,\tfrac{2q}{q-2},1$ & $0$ & $[(\tfrac{3}{2},2)]+[(2,4)]$ \\ & \;\;\;\;\;\;\;\;\;\;($q\geq 3$)&&&\;\;\;\;\;\;\;\;\;\;$+\sum_{\ell=1}^{q-1}[(1+\tfrac{\ell}{q},2)]$\\
$T_{\infty,\infty,\infty}$ & $xyz$ & $x+y+z,3,3$ & $2[(1,2)]$ & $[(2,4)]$ \\ &&&& \\
$T_{p,\infty,\infty}$ & $xyz+x^p$ & $y+z,\tfrac{2p}{p-1},2$ & $[(1,2)]$ & $\sum_{\ell=1}^{p-1}[(1+\tfrac{\ell}{p},2)]+[(2,4)]$ \\ &\;\;\;\;\;\;\;\;\;\;($p\geq 3$)&&& \\
$T_{p,q,\infty}$ & $xyz+x^p+y^q$ & $z,\tfrac{pq}{pq-p-q},1$ & $0$ & $\sum_{\ell=1}^{p-1}[(1+\tfrac{\ell}{p},2)]+[(2,4)]$ \\ &\;\;\;\;($q\geq p\geq 3$)&&& \;\;\;\;\;\;\;\;\;\;$+\sum_{\ell=1}^{q-1}[(1+\tfrac{\ell}{q},2)]$ \\
[1ex]
\end{tabular}
\label{T-X}
\end{table}\normalsize

As previously mentioned, Table \ref{T-X} makes use of a classification by Liu and Rollenske \cite{LR}. The point $p$ is simply $(0,0,0)$ in the coordinates used there, and $\tilde{\sigma}_{f,p}^2$ is calculated using \eqref{eqsss2}; $N$ denotes the number of components of $Z$. For the computation of $\mathfrak{r}$, consider for instance $D_{\infty}$: although $Z=\{x=y=0\}$, we cannot take $\fg=z$ (since then $\mathrm{sing}(\pi)\cap \pi^{-1}(\{ s=0\})=\{x=0=z\}\neq \{\uo\}$).  But $\fg=z-y$ works, and the critical locus of $\pi=(x^2+y^2z,z-y)$ is $\{x=y=0\}\cup\{x=2z+y=0\}$, with image $\{t=0\}\cup\{t=\tfrac{4}{27}s^3\}$. So $\mathfrak{r}=3$, and taking $r=4$ (and $\beta=\tfrac{1}{2}$) the RHS of \eqref{eqsss2} reads
$$0+\{[\tfrac{9}{8}]+[\tfrac{11}{8}]+[\tfrac{3}{2}]+[\tfrac{13}{8}]+[\tfrac{15}{8}]\}-\{[\tfrac{9}{8}]+[\tfrac{11}{8}]+[\tfrac{13}{8}]+[\tfrac{15}{8}]\}$$
(with all weights $=2$), giving the result in the table.

The results in the table allow us to compute the vanishing cohomology via the hypercohomology spectral sequence
\begin{equation}\label{eq-SS*}
E_2^{i,j}=H^i(\ch^j \phi_f\QQ_{\cx})\;\underset{i+j=*}{\implies}\;\HH^*(\phi_f\QQ_{\cx})=H^*_{\van}(X_t),	
\end{equation}
where the (non-perverse) cohomology sheaves record the reduced cohomologies of Milnor fibers
\begin{equation*}
\imath_p^* \ch^j\phi_f \QQ_{\cx}\cong H^j\imath_p^*\phi_f\QQ_{\cx} \cong H^j(\mathfrak{F}_{f,p})\cong V^j_{f,p}.
\end{equation*}
Recall that $\ch^2\phi_f\QQ_{\cx}$ is supported on the finite set $\mathsf{S}_0$, $\ch^1\phi_f\QQ_{\cx}$ is supported on $\bar{\mathsf{S}}_1 = \text{sing}(X_0)$, while the other cohomology sheaves vanish.  It follows that terms of \eqref{eq-SS*} other than $E_2^{0,1}$, $E_2^{1,1}$, $E_2^{2,1}$, and $E_2^{0,2}$ vanish, and the only possibly nonzero differential is $d_2\colon E_2^{0,2}\to E_2^{2,1}$. To describe $\ch^1\phi_f\QQ_{\cx}$ as a sheaf, note that $\left.(\ch^1\phi_f\QQ_{\cx})\right|_{\mathsf{S}_1} = \cv^1_f$ (viewed as a local system) and $\left.(\ch^1\phi_f\QQ_{\cx})\right|_{\mathsf{S}_0}=\oplus_{p\in\mathsf{S}_0}V_{f,p}^1$. If the $\{V_{f,p}^1\}$ vanish, then $\ch^1\phi_f\QQ_{\cx}\cong \jmath_! \cv_f^1$ under the inclusion $\jmath\colon \mathsf{S}_1\hookrightarrow \bar{\mathsf{S}}_1$; while in other cases the $\{\cv_{f,p}^1\}$ glue together with $\cv_f^1$ to yield the constant sheaf $\QQ_{\bar{\mathsf{S}}_1}(-1)$ ($T_{2,\infty,\infty},T_{p,\infty,\infty}$) or something more exotic ($T_{\infty,\infty,\infty}$; see Example \ref{ex7(1)} below).

We now turn to several examples involving degenerations of $K3$ surfaces. It is instructive to begin by looking at something familiar in this context.

\begin{example}\label{ex7(1)}
Suppose $\cx$ is a Kulikov type III semistable degeneration of $K3$'s with $\mathtt{F}$ components	, while $\bar{\mathsf{S}}_1$ consists of $\mathtt{E}$ $\PP^1$'s and $\mathsf{S}_0$ of $\mathtt{V}$ points (of type $T_{\infty,\infty,\infty}$).  As the dual graph is a triangulation of $S^2$, $\mathtt{F}=\mathtt{E}-\mathtt{V}+2$. Write $\mathcal{I}\colon \widetilde{\bar{\mathsf{S}}_1}\to X_0$ for the normalization of $\bar{\mathsf{S}}_1$ and $\imath\colon \mathsf{S}_0\hookrightarrow X_0$; then from the table, $\ch^2\phi_f \QQ_{\cx}\cong \imath_* \QQ_{\mathsf{S}_0}(-2)$ and $\ch^1\phi_f\QQ_{\cx}\cong\ker\{\mathcal{I}_*\QQ_{\widetilde{\bar{\mathsf{S}}_1}}(-1)\twoheadrightarrow\imath_*\QQ_{\mathsf{S}_0}(-1)\}$.  This yields
$$\xymatrix{\ch^2 & \QQ(-2)^{\oplus\mathtt{V}}\ar [rrd]^{d_2} \\ \ch^1 & \QQ(-1)^{\oplus \mathtt{F}-1}& \QQ(-1)& \QQ(-2)^{\oplus \mathtt{E}} \\ & H^0 & H^1 & H^2 \ar@{-}(10,-25);(90,-25)
\ar@{-}(10,-25);(10,8)}$$
for \eqref{eq-SS*}; since $H^3_{\van}=\ker\{H^4(X_0)\twoheadrightarrow H^4_{\lm}\}$ has rank $\mathtt{F}-1=\mathtt{E}-(\mathtt{V}-1)$, we must have\footnote{One can avoid a nontrivial $d_2$ by using the alternate hypercohomology spectral sequence ${}'E_1^{i,j}=H^i(X_0,\mathcal{K}^j) \implies H^*_{\van}(X_0)$ with $\phi_f\QQ_{\cx}\simeq\mathcal{K}^\bullet := \{\imath_*\QQ_{\mathsf{S}_0}(-2)[-2]\to \mathcal{I}_* R\Gamma\QQ_{\widetilde{\bar{\mathsf{S}}_1}}(-1)\to \imath_*\QQ_{\mathsf{S}_0}(-1)\}$. (Note that $\psi_f\QQ_{\cx}$ is \emph{not} quasi-isomorphic to $\oplus_j \ch^j\phi_f\QQ_{\cx}[-j]$; otherwise $d_2$ in \eqref{eq-SS*} would indeed be zero.)} 
$\mathrm{rk}(d_2)=\mathtt{V}-1$.   Conclude that $H^1_{\van}\cong \QQ(-1)^{\oplus\mathtt{F}-1}$ and $H^2_{\van}$ is an extension of $\QQ(-2)$ by $\QQ(-1)$, which via the vanishing cycle sequence
$$0\to H^1_{\van}\to H^2(X_0)\to H^2_{\lm} \to H^2_{\van} \to 0$$
yields that $H^2(X_0)$ is an extension of $\QQ(-1)^{\oplus 18+\mathtt{F}}$ by $\QQ(0)$.  For instance, this gives $h^2(X_0)=23$ if $\mathtt{F}=4$, which is borne out by applying Mayer--Vietoris to (say) the minimal semistable reduction of the standard tetrahedral degeneration of quartic $K3$'s.
\end{example}

\begin{example} \label{ex7(2)}
Next we consider a degeneration of $K3$'s with $\bar{\mathsf{S}}_1\cong \PP^1$ and $\mathsf{S}_0 = 4$ pinch points ($D_{\infty}$), about each of which the rank-one local system $\cl$ has monodromy $(-1)$. In particular, this gives $\ch^1\phi_f\QQ_{\cx}\cong \jmath_!\cl(-1)\cong \jmath_* \cl(-1)$, whose $H^1$ is $\IH^1(\cl)(-1)\cong H^1(E)(-1)$ with $E\overset{2:1}{\twoheadrightarrow} \bar{\mathsf{S}}_1$ the elliptic curve branched over $\mathsf{S}_0$. The spectral sequence \eqref{eq-SS*} is thus simply
$$\xymatrix{\ch^2 & \QQ(-1)^{\oplus4}_{-1} \\ \ch^1 & 0 & H^1(E)(-1) & 0  \\ & H^0 & H^1 & H^2 \ar@{-}(10,-25);(90,-25)
\ar@{-}(10,-25);(10,8)} $$	
where the subscript ``$-1$'' refers to the action of $T^{\text{ss}}$, and $H^2_{\van}$ is the direct sum\footnote{As usual, the extension is split by the action of $T^{\text{ss}}$.} of the two nonzero terms.

Now in the vanishing cycle sequence
\begin{equation}\label{eq-7E2}
0\to H^2(X_0)\to H^2_{\lm} \to H^2_{\van}\to H^3(X_0)\to 0
\end{equation}
we have two possible scenarios, depending on whether $H^3(X_0)$ is \emph{(a)} $0$ or \emph{(b)} $H^1(E)(-1)$. In case \emph{(a)}, the degeneration has type II (in the sense of Definition \textbf{II.5.4}) and the nonzero terms of \eqref{eq-7E2} have Hodge--Deligne diagrams
\[
\includegraphics[scale=0.65]{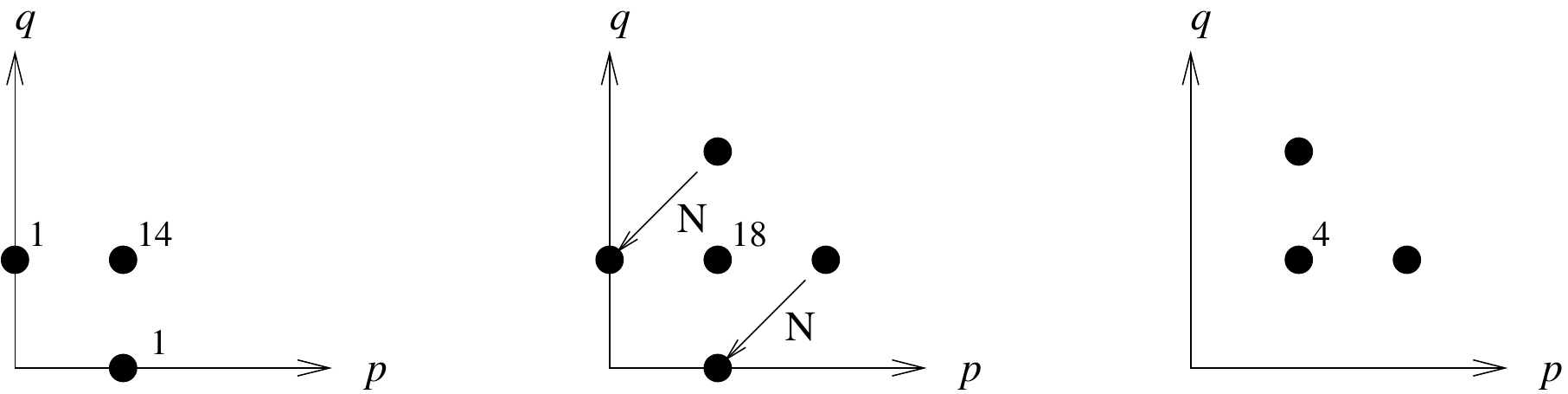}
\]
In case \emph{(b)}, the degeneration would be of type I, with the first two terms of \eqref{eq-7E2} pure. In fact, case \emph{(b)} cannot occur:  since $X_0$ is irreducible and the smooth fibers $X_{t\neq 0}$ are $K$-trivial, we have $K_{\cx}=0$ with nowhere-vanishing section $\Omega$.   Writing $\omega_0 :=\mathrm{Res}(\tfrac{\Omega}{f})$ and $\eta\colon \tilde{X}_0\to X_0$ for the normalization, $\eta^*\omega_0$ is a section of $\Omega_{\tilde{X}_0}^2\langle\log\,E\rangle$ with nonvanishing residue $\omega_E \in \Omega^1(E)$.  Therefore $H^2(X_0\setminus \bar{\mathsf{S}}_1)\cong H^2(\tilde{X}_0\setminus E)$ has a nontrivial $(1,2)+(2,1)$ part, which must come from $H_2(X_0)(-2)$ in the localization sequence since $H_1(\bar{\mathsf{S}}_1)=\{0\}$.\footnote{The localization sequence is a bit subtle in the singular case:  we need the setting where $X$ is a singular surface containing a curve $Y$, with $X\setminus Y$ smooth.  Then the homology sequence $\to H_2(X)\to H_2(X,Y)\to H_1(Y)$ twisted by $\QQ(-2)$ is $\to H_2(X)(-2)\to H^2(X\setminus Y)\to H_1(Y)(-2)$.} That is, $H^2(X_0)$ has a $(1,0)+(0,1)$ part, as claimed.

The main point is of course that we can say all this \emph{without} resolving $X_0$, let alone performing semistable reduction (or similar).  This will be refined by Example \ref{ex-degk3} below, in the setting of degenerating quartics in $\PP^3$.
\end{example}

\section{The $J_{k,\infty}$ series}\label{S7.4}

We continue our study of the effect of non-isolated surface singularities on the Hodge theory of degenerations, by considering some non-slc examples. Namely, 
one of the simplest series of \emph{non}-slc non-isolated surface singularities are the 
$$J_{\kappa,\infty}:\;\;\;\;f_{\kappa}\;\underset{\text{loc}}{\sim} \; x^2 + y^3 + y^2 z^{\kappa}$$
for $\kappa\geq 3$.  (Note that $J_{1,\infty}$ resp. $J_{2,\infty}$ are the singularities $D_{\infty}$ and $T_{2,3,\infty}$ considered above; however, the formulas derived below apply to them too.) Our interest in this class of singularities is partially motivated by their occurrence in the study of projective degenerations of $K3$ surfaces (e.g. see \cite{log2}; see also \cite[\S3]{KL} for further related discussion in the isolated singularity case). In this context, we note that an interesting byproduct of our study (Theorem \ref{th7.4} and Remark \ref{rem-jk} below) is a conceptual explanation of the fact (observed heuristically for quartics in \cite{log2}) that that the worst $J_{\kappa,\infty}$ that can occur in a degeneration of $K3$ surfaces is $J_{4,\infty}$.

 Locally, for $J_{\kappa,\infty}$,   the singular locus $Z$ is just the $z$-disk $\{x=y=0\}$, and taking $\fg=z$ gives $\Lambda^{\circ}=\{t=\tfrac{4}{27}s^{3\kappa}\}$ so that $\mathfrak{r}=3\kappa$. Away from $\uo$ (on $Z^*$) we have $A_{\infty}$ as above, with the branches exchanged about $\uo$ (i.e. $\beta=\tfrac{1}{2}$) $\iff$ $\kappa$ is odd. By connectedness of $\mathfrak{F}_{y^3+y^2 z^{\kappa}}$ and \eqref{eq-ST*}, $\sigma^1_{f_{\kappa},\uo}=0$. Applying \eqref{eqsss2} with $r=3k$ (without the tildes) and taking the $\mu$-constant deformation $f+z^{3\kappa} \rightsquigarrow x^2 + y^3 + z^{3\kappa}$ gives
\begin{equation}\label{eq-sk*}
\begin{split}
\sigma_{f_{\kappa},\uo}^2 &= \sigma_{x^2 + y^3 + z^{3\kappa}}-\sum_{k=0}^{3\kappa-1} [1+\tfrac{\beta+k}{3\kappa}]\\
&= 	\{\tfrac{5}{6},\tfrac{7}{6}\}*\{\tfrac{1}{3\kappa},\tfrac{2}{3\kappa},\ldots,\tfrac{3\kappa-1}{3\kappa}\}-\{1\text{ resp.~}\tfrac{1+6\kappa}{6\kappa}\}*\{0,\tfrac{1}{3\kappa},\tfrac{2}{3\kappa},\ldots,\tfrac{3\kappa-1}{3\kappa}\}
\end{split}
\end{equation}
by $\S$\textbf{II.2.1} (where $\{A\}*\{B\}:=\sum_{\alpha\in A,\beta\in B}[\alpha+\beta]$). One easily sees that all the negative terms are cancelled, and that the only remaining integer term is $[2]$ is $\kappa$ is even (and nothing if $\kappa$ is odd). To show that the accompanying weight is $4$ (whereas all the others are $2$), we must use \eqref{eqsss2} with $r=3\kappa+1$; rather than doing this in full, one just needs that $f+z^{3\kappa+1}$ has $h^{2,2}_{\van,\uo}=1$ for $\kappa$ even (and $0$ for $\kappa$ odd). This follows at once from Theorem \textbf{II.5.4}, allowing us to conclude for $\kappa$ odd
\begin{equation}\label{eq-sk2*}
\tilde{\sigma}_{f_{\kappa},\uo}^2 = \sum_{m=1}^{\frac{\kappa-1}{2}}\{[(\tfrac{5\kappa+2m}{6\kappa},2)]+[(\tfrac{13\kappa-2m}{6\kappa},2)]\}+\sum_{m'=1}^{2\kappa-1}[(\tfrac{7\kappa+2m}{6\kappa},2)]
\end{equation}
resp. for $\kappa$ even
\begin{equation}\label{eq-sk3*}
\tilde{\sigma}_{f_{\kappa},\uo}^2 = \sum_{m=1}^{\frac{\kappa}{2}-1}\{[(\tfrac{\frac{5\kappa}{2}+m}{3\kappa},2)]+[(\tfrac{\frac{13\kappa}{2}-m}{3\kappa},2)]\}+\sum_{m'=1}^{2\kappa-1}[(\tfrac{\frac{7\kappa}{2}+m}{3\kappa},2)]+[(2,4)].	
\end{equation}
In particular, $T^{\text{ss}}$ has order $6\kappa$ resp. $3\kappa$, and $N$ is trivial on $H^2(\mathfrak{F}_{f_{\kappa}})$.

\begin{example}\label{ex7(3)}
Continuing where Example \ref{ex7(2)} left off, we can look at various ``degenerations'' of the singular configuration there by colliding pinch points:  from (\emph{i}) four $J_{1,\infty}=D_{\infty}$'s (Ex. \ref{ex7(2)}), to (\emph{ii}) two $J_{1,\infty}$'s and one $J_{2,\infty}=T_{2,3,\infty}$, to (\emph{iii}) one $J_{1,\infty}$ and one $J_{3,\infty}$ or (\emph{iii}$'$) two $J_{2,\infty}$'s, to (\emph{iv}) one $J_{4,\infty}$, all on a smooth $\PP^1$ of $A_{\infty}$'s.  For $p$ of type $J_{\kappa,\infty}$, the MHS on $V_{f_{\kappa},\uo}^2$ takes the form
\[
\includegraphics[scale=0.5]{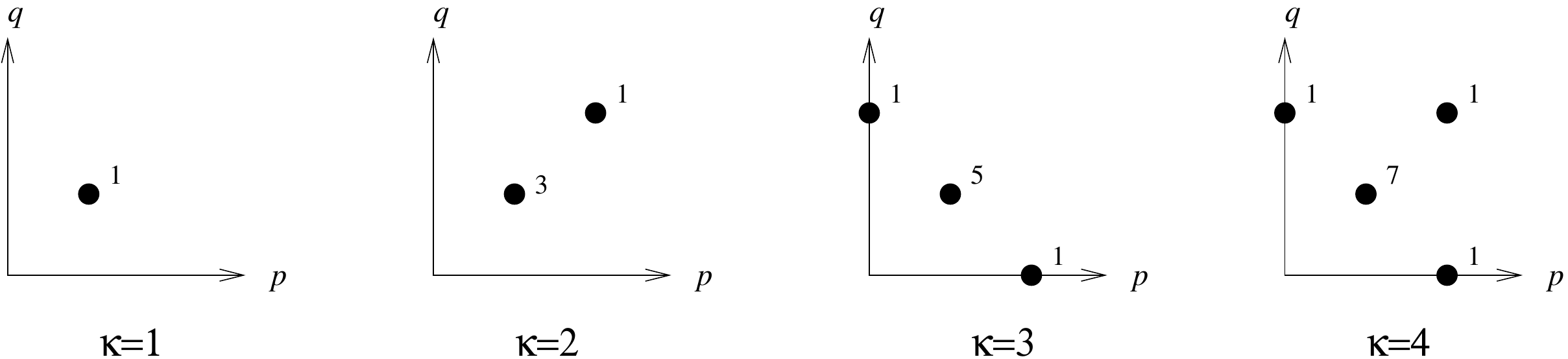}
\]
where only the $(2,2)$ classes are $T^{\text{ss}}$-invariant (and $T^{\text{ss}}$ has order $18$ resp. $12$ on the $(2,0)$ part for $\kappa=3$ resp. $4$).

To compute $H^2_{\van}(X_t)$, we use \eqref{eq-SS*} and $\ch^1\phi_f\QQ_{\cx}\cong \jmath_! \cl(-1)$, where $\cl$ has monodromy (of $-1$) about only the $J_{1,\infty}$ and $J_{3,\infty}$ points. Moreover, the fact that the general fibers of $f\colon \cx\to \Delta$ are $K3$ implies that for (\emph{iii}$'$), (\emph{iv}) a $(2,2)$ class in $E_2^{0,2}$ must cancel with $E_2^{2,1}$. (Otherwise we would have $\text{rk}\,\gr_F^2 H^2_{\van}(X_t)=2$.) This yields for $H^2_{\van}$
\[
\includegraphics[scale=0.5]{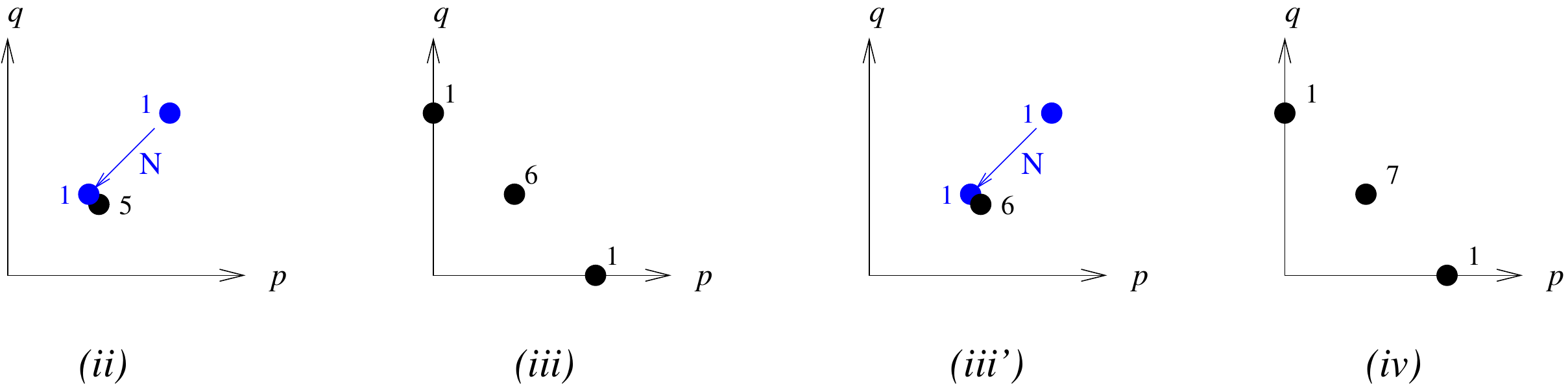} 
\]
so that (\emph{ii}), (\emph{iii}$'$) [resp. (\emph{iii}), (\emph{iv})] are degenerations of type III [resp. I].  Only the classes in blue are $T^{\text{ss}}$-invariant.
\end{example}

Here is an intriguing consequence of \eqref{eq-sk2*}-\eqref{eq-sk3*}, which will be further strengthened in the next subsection (Cor. \ref{cor-dis}):
\begin{thm}\label{th7.4}
Let $\cx\overset{f}{\to}\Delta$ be a degeneration of surfaces with smooth total space, generic geometric genus $p_g=h^{2,0}(X_t)$, reduced special  fiber $X_0$, and $p\in X_0$ of type $J_{\kappa,\infty}$. Then $p_g \geq \lfloor \tfrac{\kappa-1}{2}\rfloor$.
\end{thm}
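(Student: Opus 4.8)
The plan is to follow the Hodge type $(2,0)$ classes of the local Milnor cohomology $V^2_{f,p}$ at $p$ through the vanishing-cycle calculus and show that they force $F^2 H^2_{\lm}(X_t)$ --- hence $p_g$ --- to be at least $\lfloor\tfrac{\kappa-1}{2}\rfloor$. Two standard facts take care of the bookkeeping at the two ends. First, the dimension of the Hodge filtration is preserved under the limit, so $\dim F^2 H^2_{\lm}(X_t)=\dim F^2 H^2(X_t)=h^{2,0}(X_t)=p_g$. Second, by $\S\ref{S7.1}$ the map $\can$ of \eqref{eq7.0} restricts to an isomorphism of MHS $H^2_{\lm}(X_t)^n \overset{\sim}{\to} H^2_{\van}(X_t)^n$ on non-unipotent parts (there $\sp$ and $\delta$ vanish for eigenvalue reasons). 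So it suffices to produce a $\lfloor\tfrac{\kappa-1}{2}\rfloor$-dimensional subspace of $F^2 H^2_{\van}(X_t)^n$.

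Next I would read off the type $(2,0)$ part of $V^2_{f,p}\cong V^2_{f_\kappa,\uo}$ from \eqref{eq-sk2*}--\eqref{eq-sk3*}: the weighted-spectrum entries $(\alpha,2)$ with $\lfloor\alpha\rfloor=2$ are precisely the $[(\tfrac{13\kappa-2m}{6\kappa},2)]$, $1\le m\le\tfrac{\kappa-1}{2}$ (for $\kappa$ odd), resp. the $[(\tfrac{13\kappa/2-m}{3\kappa},2)]$, $1\le m\le\tfrac{\kappa}{2}-1$ (for $\kappa$ even) --- in both cases exactly $\lfloor\tfrac{\kappa-1}{2}\rfloor$ of them, the extra class $[(2,4)]$ in the even case being of type $(2,2)$. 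These span a subspace $W\subseteq F^2 V^2_{f_\kappa,\uo}$ of dimension $\lfloor\tfrac{\kappa-1}{2}\rfloor$; a one-line check on fractional parts shows their $T^{\text{ss}}$-eigenvalues are pairwise distinct and lie outside $\{1,-1\}$, so $W$ is non-unipotent and equals $\bigoplus_\zeta E_\zeta(V^2_{f_\kappa,\uo})$ over those $\lfloor\tfrac{\kappa-1}{2}\rfloor$ eigenvalues $\zeta$, each summand one-dimensional of type $(2,0)$.

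Then I would transport $W$ into $H^2_{\van}(X_t)$ using the spectral sequence \eqref{eq-SS*}, whose only nonzero differential is $d_2\colon\bigoplus_{p'}V^2_{f,p'}=E_2^{0,2}\to E_2^{2,1}=H^2(X_0,\ch^1\phi_f\QQ_{\cx})$ and whose $E_\infty^{0,2}=\ker(d_2)$ is a quotient MHS of $H^2_{\van}(X_t)$. The $p$-component of $d_2$ factors through the local cohomology $H^2_{\{p\}}(X_0,\ch^1\phi_f\QQ_{\cx})$; since $X_0$ is transversally of type $A_\infty$ along $\mathrm{sing}(X_0)$ near $p$ and $V^1_{f_\kappa,\uo}=0$ (so near $p$ one has $\ch^1\phi_f\QQ_{\cx}\cong\jmath_!\cv^1_f$ with $\cv^1_f$ of rank one and monodromy $\pm1$), this local cohomology carries only the $T^{\text{ss}}$-eigenvalues $\pm1$. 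Hence $d_2$ annihilates $W$, so $W$ embeds into $\ker(d_2)$, and by strictness of MHS-morphisms with respect to $F^\bullet$, $\dim F^2 E_\zeta(H^2_{\van}(X_t))\ge\dim F^2 E_\zeta(\ker d_2)\ge\dim F^2 E_\zeta(W)=1$ for each of the $\lfloor\tfrac{\kappa-1}{2}\rfloor$ distinct eigenvalues $\zeta\ne1$. Summing over these gives $\dim F^2 H^2_{\van}(X_t)^n\ge\lfloor\tfrac{\kappa-1}{2}\rfloor$, and combined with the first paragraph, $p_g\ge\lfloor\tfrac{\kappa-1}{2}\rfloor$.

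The one step I expect to be genuinely delicate is this ``no cancellation'' point --- that the $(2,0)$-classes of $V^2_{f,p}$ survive the differential $d_2$ of \eqref{eq-SS*}. It is handled by the eigenvalue bookkeeping: the target of the $p$-local part of $d_2$ carries only the $T^{\text{ss}}$-eigenvalues $\pm1$ (as $X_0$ is transversally $A_\infty$ along its singular locus at $p$), whereas those $(2,0)$-classes have eigenvalues $\ne\pm1$. The remaining ingredients --- invariance of $\dim F^2$ under the limit, the $H_{\lm}^n\cong H_{\van}^n$ comparison, and strictness --- are routine.
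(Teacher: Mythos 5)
Your proposal is correct, and its skeleton is the same as the paper's: read off $h^{2,0}(V^2_{f,p})=\lfloor\tfrac{\kappa-1}{2}\rfloor$ from \eqref{eq-sk2*}--\eqref{eq-sk3*}, feed this into \eqref{eq-SS*}, show the $(2,0)$ classes survive the only differential $d_2\colon E_2^{0,2}\to E_2^{2,1}$, and then pass to $H^2_{\van}\subset$-wise bounds on $H^2_{\lm}$ and $p_g$. The genuine difference is in how the no-cancellation step is justified. The paper argues by Hodge type: $E_2^{2,1}=H^2(\ch^1\phi_f\QQ_{\cx})$ has no $(2,0)$ part (it is of type $(2,2)$ in this setting), and since $d_2$ is a morphism of MHS it cannot touch the $(2,0)$ classes --- one line, with no need to localize the differential. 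You argue instead by $T^{\mathrm{ss}}$-eigenvalues, which works but requires exactly the extra ingredient you supplied: the localization of the $p$-component of $d_2$ through $H^2_{\{p\}}$ (maps out of a skyscraper summand are local, via $R\mathcal{H}om(\imath_{p*}V^2_{f,p},\ch^1\phi_f\QQ_{\cx})\simeq\imath_{p*}\mathcal{H}om(V^2_{f,p},\imath_p^!\ch^1\phi_f\QQ_{\cx})$). This is not cosmetic: the theorem allows other components of $\mathrm{sing}(X_0)$ with arbitrary transverse types, whose Milnor $H^1$'s could contribute eigenvalues $\neq\pm1$ to the \emph{global} $E_2^{2,1}$, so a purely global eigenvalue count would not suffice; your local factorization closes that gap. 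Two small touch-ups: for $n=2$ the horizontal $T^{\mathrm{ss}}$-action on $\cl(-1)$ along the $A_\infty$ locus is trivial (eigenvalue $+1$, the vertical monodromy being the $\pm1$), which only strengthens your claim; and your detour through the isomorphism $H^2_{\lm}(X_t)^n\cong H^2_{\van}(X_t)^n$ of non-unipotent parts is fine but unnecessary --- the paper's direct chain $h^{2,0}(X_t)\geq h^{2,0}(H^2_{\lm})\geq h^{2,0}(H^2_{\van})\geq\lfloor\tfrac{\kappa-1}{2}\rfloor$ does the same job, while your route has the mild virtue of making the monodromy eigenvalues of the surviving classes explicit.
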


\begin{proof}
The formulas show that $h^{2,0}(V_{f,p}^2)=\lfloor \tfrac{\kappa-1}{2}\rfloor$ and $h^{2,2}(V_{f,p}^2)=0$ resp. $1$ (for $\kappa$ odd resp. even) for the summand $V_{f,p}^2$ contributed by $p$ to $E_2^{0,2}$. On the other hand, $E_2^{2,1}=H^2(\ch^1\phi_f\QQ_{\cx})$ can only have Hodge type $(2,2)$, so cannot cancel the $(2,0)$ part of $E_2^{0,2}$ under $d_2$. Hence $h^{2,0}(X_t)\geq h^{2,0}(H_{\lm}^2)\geq h^{2,0}(H_{\van}^2)\geq \lfloor \tfrac{\kappa-1}{2}\rfloor$.	
\end{proof}

\begin{rem} \label{rem-jk}
Of course, the same proof shows that if $\{p_j\}\subset \text{sing}(X_0)$ are of type $J_{\kappa_j,\infty}$, then $p_g(X_t)\geq \sum_j \lfloor \tfrac{\kappa_j -1}{2}\rfloor$.	 In particular, when $p_g(X_t)=1$ (e.g. $K3$ surfaces), the only non-slc singularities of type $J_{\kappa,\infty}$ that can occur are $J_{3,\infty}$ and $J_{4,\infty}$, and at most one such singularity can occur. This matches with the detailed analysis of the degenerations of quartic $K3$ surfaces of \cite{log2}. Of course, the point is that Theorem \ref{th7.4} recovers this in much more generality and more conceptually. 
\end{rem}

\section{Clemens-Schmid discrepancies for nodal total spaces}\label{S7.5}

One of the new phenomena that occurs in the presence of non-isolated singularities for $X_0$ is the fact that the total space $\cx$ is typically singular. In this section we comment on this situation, under the genericity assumption that $\cx$ has only ordinary double points. This occurs for instance when considering generic smoothings of hypersurfaces $X_0$ with $1$-dimensional singular locus (and $A_\infty$ generic singularities). These types of examples occur frequently in the study of compactifications of $K3$ surfaces or surfaces of general type.

In general, when $\cx$ is singular along $X_0$, the Clemens-Schmid sequence and its two main corollaries
\begin{enumerate}[label=\bf(\Alph*),leftmargin=1cm]
\item $H^k(X_0)$ surjects onto the $T$-invariants $H^k_{\lm}(X_t)^T$ and
\item $H^{k+1}_{\pha}(X_0)=\text{im}\{H^k_{\van}(X_t)\to H^{k+1}(X_0)\}$ is pure of weight $k+1$
\end{enumerate}
fail.  Various approaches to quantifying or bounding this failure were described in \S\textbf{I.8-9}.  For the non-isolated singularities considered in this section, with $\dim(\text{sing}(X_0))=1$, most often one has type $A_{\infty}$ (transverse $A_1$) on the ``generic stratum'' $\mathsf{S}_1 \subseteq \text{sing}(X_0)$.  When $X_0 = \{F(\ux)=0\}$ is a hypersurface of degree $d$, and $\cx=\{ F(\ux)+tG(\ux)=0\mid t\in\Delta\}$ a generic smoothing (with $\deg(G)=d$), $\text{sing}(\cx)$ consists of $d\cdot \deg(\text{sing}(X_0))$ nodes contained in $\mathsf{S}_1$. In this case we can say much more about (A) and (B).  This is crucial, for instance, in the Hodge-theoretic analysis of how the $\{J_{k,\infty}\}$ singularities actually arise in GIT.

\begin{thm}\label{th-csd}
Suppose the total space of $f\colon \cx \to \Delta$ has $\rka$ nodes on $X_0$, and no other singularities. (The singular fiber itself may have arbitrary singularities and nonreduced components.) Recall that $n+1=\dim(\cx)$.
\begin{enumerate}[label=\textup{({\roman*})},leftmargin=1cm]
\item If $n$ is odd, then \textup{(A)} and \textup{(B)} hold.
\item If $n=2m$ is even, \textup{(A)} and \textup{(B)} can only fail when $k=2m$, and then only in type $(m,m)$.  Put
 \begin{equation*}
   \begin{split}
      \ra &:= \dim(\textup{coker}\{H^{2m}(X_0)^{m,m}\overset{\sp^{2m}}{\to} (H^{2m}_{\lm}(X_t)^T)^{m,m}\} ) \;\;\;\;\;\text{and}\\
      \rb &:= \dim\left(H_{\pha}^{2m+1}(X_0)^{m,m}\right) = \textup{rank}\{(H^{2m}_{\van}(X_t))^{m,m}\overset{\delta}{\to}H^{2m+1}(X_0)\}
   \end{split}
 \end{equation*}
for the corresponding ``Clemens--Schmid discrepancies''; then we have $\ra+\rb\leq\rka$.  In particular, if $m=1$ and $X_0$ is irreducible, then $\ra+\rb=\rka$.
\end{enumerate}
\end{thm}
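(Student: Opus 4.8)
The plan is to compute both discrepancies by localizing the effect of the nodes on the vanishing-cycle sequence \eqref{eq7.0} and comparing with the smooth-total-space situation, where (A) and (B) hold by Clemens--Schmid. Since $\cx$ has isolated singularities (the $\rka$ nodes $p_1,\dots,p_{\rka}$, all lying on $X_0$), let $\nu\colon \widehat{\cx}\to\cx$ be a small resolution or, more robustly, the blow-up of the nodes, with exceptional divisors $E_j$ (each a quadric of dimension $n-1$). The key input is that $\QQ_{\cx}[n+1]$ differs from $IC_{\cx}=\pr\nu_*\QQ_{\widehat\cx}[n+1]$ only by the contribution of the nodes: on the level of nearby/vanishing cycles one has a distinguished triangle relating $\psi_f\QQ_{\cx}$, $\psi_{f\circ\nu}\QQ_{\widehat\cx}$ and a skyscraper-type complex supported on $\{p_j\}$ whose stalk cohomology is governed by $\widetilde H^*$ of the local Milnor fiber of the node and the vanishing cohomology of a smooth quadric. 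Concretely, the local Milnor fiber of $f$ at a node $p_j$ (where $f\sim z_0^2+\dots+z_n^2$) has reduced cohomology $\QQ(-\tfrac{n}{2})$ concentrated in degree $n$ when $n$ is even, and is acyclic when $n$ is odd. This already gives part (i): when $n$ is odd the node contributes nothing to $\phi_f\QQ_{\cx}$ beyond what the resolved family sees, so \eqref{eq7.0} agrees (as MHS with $T^{\mathrm{ss}}$-action) with the one for a smooth total space, and (A), (B) hold verbatim.

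For part (ii), with $n=2m$, the first step is to pin down in which degree and Hodge type the discrepancy can live. The difference between the Clemens--Schmid sequence for $\cx$ and for $\widehat\cx$ is supported at the nodes and, by the local computation above, contributes only a sum of $\rka$ copies of $\QQ(-m)$ sitting in cohomological degree $2m$ — equivalently, it perturbs \eqref{eq7.0} only around $H^{2m}$ and only in type $(m,m)$. This forces the failure of (A) to be measured by $\ra$ and the failure of (B) by $\rb$ exactly as defined, and shows both vanish outside $k=2m$ and type $(m,m)$. The second step is the inequality $\ra+\rb\le\rka$: I would set up the commutative ladder comparing the vanishing-cycle sequences of $\cx$ and $\widehat\cx$, whose connecting map is a $\rka$-dimensional space $\bigoplus_j\QQ(-m)$ coming from the classes $[E_j]$; a diagram chase (snake lemma) identifies the combined defect in (A) and (B) with a subquotient of this $\rka$-dimensional space, bounding $\ra+\rb$ by $\rka$. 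One clean way to phrase it: the cokernel in $\ra$ and the image in $\rb$ inject, respectively, into the ``new'' $(m,m)$-classes and into their annihilator under the polarization pairing on $H^{2m}_{\lm}$ restricted to the span of the $[E_j]$, and these two spaces together have dimension $\le\rka$.

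The final step is the sharpening when $m=1$ and $X_0$ is irreducible: here I would show the $\rka$ vanishing-cycle classes attached to the nodes are actually independent in the relevant quotient, so no slack is lost and $\ra+\rb=\rka$. Irreducibility of $X_0$ and $n=2$ mean the transverse $A_1$ locus $\mathsf S_1$ is connected-in-the-right-sense and $H^0(X_0)$, $H^4(X_0)$ are one-dimensional, which kills the potential overlap between the two discrepancies: the span of the $[E_j]$ pairs nondegenerately with the image of $H^2_{\van}$ under $\delta$ complementarily to how it maps into $\mathrm{coker}(\sp)$. Concretely I expect to exhibit, via the residue/localization argument already used in Example \ref{ex7(2)}, that each node contributes exactly one unit to $\ra+\rb$ and none can be absorbed. \textbf{The main obstacle} I anticipate is precisely this last exactness claim — controlling the interaction between the two discrepancies, i.e. ruling out cancellation between the cokernel in (A) and the phantom classes in (B). The inequality only needs a bound on the total defect, but the equality for $m=1$ requires knowing the $\rka$ node-classes remain linearly independent after passing to the appropriate subquotient of $H^2_{\lm}$, which is where the irreducibility hypothesis and a careful analysis of the polarization form on the span of $\{[E_j]\}$ will have to be used; I would likely model this on the local-invariant-cycle/residue computations in \S\ref{S7.3} rather than a purely formal diagram chase.
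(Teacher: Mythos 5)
Your overall strategy (resolve the nodes, compare with the intersection-cohomology situation, and bound the total defect by an $\rka$-dimensional space of node classes) is in the right spirit, but the proposal has a false key input and leaves the decisive steps unproven. The local claim on which you base part (i) is incorrect: the Milnor fiber of $z_0^2+\cdots+z_n^2$ is homotopy equivalent to an $n$-sphere, so its reduced cohomology is $\QQ$ in degree $n$ for \emph{every} $n$ and is never acyclic; moreover, at a node of the \emph{total space} the local form of $f$ is not a nondegenerate quadratic form on a smooth ambient space in the first place (for $n=1$, $\cx=\{xy=t^2\}$ with $f=t$ has Milnor fiber $\simeq S^1$, so nodes can contribute to $\phi_f\QQ_{\cx}$ even when $n$ is odd). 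Hence your argument that ``the node contributes nothing to $\phi_f\QQ_{\cx}$'' collapses. What actually drives the parity dichotomy is the comparison of $\QQ_{\cx}[n+1]$ with $\IC_{\cx}$: blowing up a node of the $(n+1)$-fold $\cx$ gives an exceptional quadric $Q_i$ of dimension $n$ (not $n-1$), and the Decomposition Theorem together with perversity of $\QQ_{\cx}[n+1]$ yield a short exact sequence $0\to\oplus_i\,\imath^{q_i}_*V_i\to\QQ_{\cx}[n+1]\to\IC_{\cx}\to 0$ with $V_i\cong H^n_{\mathrm{pr}}(Q_i)$. For $n$ odd this vanishes, so $\cx$ is an intersection homology manifold and (A), (B) hold because the Clemens--Schmid arguments of \S\textbf{I.5} apply verbatim --- not because the nodes are invisible to vanishing cycles.

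For part (ii) the two essential steps are asserted rather than derived. The confinement of the failure to $k=2m$ and Hodge type $(m,m)$ requires weight information: from the decomposition of $H^*(\tilde{\cx})$ and $H^*_c(\tilde{\cx})$ one gets that $\IH^k(\cx)$ has weights $\le k$ and $\IH^k_c(\cx)$ has weights $\ge k$, and the $\IH$ Clemens--Schmid sequence then shows $\ker(\sp^k_{\IH})$ is pure of weight $k$; your snake-lemma sketch never produces these statements. The inequality itself then falls out of the exact sequence $0\to H^{2m}(\cx)\to\IH^{2m}(\cx)\overset{\alpha}{\to}\QQ(-m)^{\oplus\rka}\overset{\beta}{\to}H^{2m+1}(\cx)\to\IH^{2m+1}(\cx)\to 0$, giving $\ra\le\mathrm{rk}(\alpha)$, $\rb=\mathrm{rk}(\beta)$ and $\mathrm{rk}(\alpha)+\mathrm{rk}(\beta)=\rka$; no polarization/annihilator argument is needed, and the one you sketch is too vague to check. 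Most importantly, the equality case $m=1$, $X_0$ irreducible --- which you yourself flag as the main obstacle --- is exactly where the proposal stops. It is settled not by proving linear independence of node classes under a pairing, but by a component count: Clemens--Schmid for $\tilde{\cx}$ and the quadric contributions give $\mathrm{rk}(\IH^2_c(\cx)\to\IH^2(\cx))=\rg-1$, with $\rg$ the number of components of $X_0$, so irreducibility makes $\sp^2_{\IH}$ an isomorphism and forces $\ra=\mathrm{rk}(\alpha)$, whence $\ra+\rb=\rka$. Without the IC comparison with its weight consequences, the exact sequence bounding the defect, and this component count, the proposal does not yet amount to a proof.
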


\begin{rem}
Taking $T$-invariants of type $(m,m)$ in \eqref{eq7.0} yields
\begin{equation}\label{eq-csd}
0\to \frac{[H^{2m}_{\lm}(X_t)^{m,m}]^T}{\sp(H^{2m}(X_0)^{m,m})}\to \frac{[H^{2m}_{\van}(X_t)^{m,m}]^T}{\mathrm{im}(N)}\to H^{2m+1}_{\pha}(X_0)^{m,m}\to 0\,,
\end{equation}
in which the end terms have ranks $\ra$ and $\rb$. So one way to view the Theorem is as saying that $\rka$ is an upper bound on the middle term (which would be zero were $\cx$ smooth).
\end{rem}

\begin{proof}[Proof of Theorem]
Consider a resolution $\pi\colon \tilde{\cx}\to \cx$ with exceptional quadrics $Q_i :=\pi^{-1}(q_i)$ ($i=1,\ldots,\rka$) and associated long-exact sequence
\begin{equation}\label{eqp-1}
\cdots \to H^k(\cx)\to H^k(\tilde{\cx})\to \oplus_{i=1}^{\rka}\tilde{H}^k(Q_i)\to \cdots
\end{equation}
of MHS.\footnote{As usual, $H^*(\cx)$, $H^*(\tilde{\cx})$, $H^*_c(\tilde{\cx})$, $\IH^*(\cx)$, $\IH^*_c(\cx)$ acquire their MHSs via identification with $H^*(X_0)$, $H^*(\pi^{-1}(X_0))$, $H_{2n+2-*}(\pi^{-1}(X_0))$, $\HH^*(\imath_{X_0}^*\IC_{\cx}[-n-1])$, and $\HH^*(\imath^!_{X_0}\IC_{\cx}[-n-1])$.} Applying the Decomposition Theorem to $\pi$ (cf.~\textbf{(I.7.12)}) gives
$$ R\pi_*\QQ_{\tilde{\cx}}[n+1]\simeq \IC_{\cx}\oplus \bigoplus_{i,j} \imath^{q_i}_* \left\{ \begin{matrix} H^{n+1+j}(Q_i)[-j], & j\geq 0 \\ H_{n+1-j}(Q_i)(-n-1)[-j], & j<0 \end{matrix}\right.$$
hence
\begin{equation}\label{eqp-2}
H^k(\tilde{\cx})\cong \IH^k(\cx)\oplus \bigoplus_i \left\{ \begin{matrix} H^k(Q_i),& k>n\\ H^{k-2}(Q_i)(-1),& k\leq n\end{matrix}\right.
\end{equation}
and
\begin{equation}\label{eqp-3}
H_c^k(\tilde{\cx})\cong \IH_c^k(\cx)\oplus \bigoplus_i \left\{ \begin{matrix} H^k(Q_i),& k>n\\ H^{k-2}(Q_i)(-1),& k\leq n\end{matrix}\right. ,
\end{equation}
while perversity of $\QQ_{\cx}[n+1]$ yields the s.e.s.
\begin{equation}\label{eqp-4}
0\to \oplus_i \imath^{q_i}_* V_i \to \QQ_{\cx}[n+1] \to \IC_{\cx}\to 0
\end{equation}
in $\mathrm{MHM}(\cx)$ (for some MHSs $\{V_i\}$).  From \eqref{eqp-1}-\eqref{eqp-2} we deduce that $V_i\cong H^n_{\text{pr}}(Q_i)$, which is zero for $n$ odd, making $\cx$ an intersection homology manifold; in this case the results of \S\textbf{I.5} for $\cx$ smooth (including Clemens-Schmid) go through.

When $n=2m$ is even, one has $V_i\cong \QQ(-m)$, and \eqref{eqp-2}-\eqref{eqp-3} imply that the weights of $\IH^k(\tilde{\cx})$ resp. $\IH^k_c(\tilde{\cx})$ are $\leq k$ resp. $\geq k$.  This from the $\IH$ C-S sequence \textbf{(I.5.7)}
\begin{equation}\label{eqp-5}
\xymatrix{\IH^k_c(\cx) \ar [r] & \IH^k(\cx) \ar [r]^{\sp_{\IH}^k} & H^k_{\lm}(X_t)^T \ar [r] & 0 \\ & H^k(\cx) \ar [u]^{\sigma^k} \ar [ur]^{\sp^k}}
\end{equation}
we get that $\ker(\sp^k_{\IH})$ has pure weight $k$.  By \eqref{eqp-4}, \small
\begin{equation}\label{eqp-6}
0\to H^{2m}(\cx)\overset{\sigma^{2m}}{\to}\IH^{2m}(\cx)\overset{\alpha}{\to}\QQ(-m)^{\oplus \rka}\overset{\beta}{\to}H^{2m+1}(\cx)\overset{\sigma^{2m+1}}{\to}\IH^{2m+1}(\cx)\to 0
\end{equation}\normalsize
is exact, and $\sigma^k$ is an isomorphism for $k\neq 2m,2m+1$. Conclude that $\mathrm{coker}(\sp^k)=\{0\}$ for $k\neq 2m$, $\ker(\sp^k)$ has weight $k$ for $k\neq 2m+1$, while $\mathrm{coker}(\sp^{2m})\hookrightarrow \QQ(-m)^{\oplus\mathrm{rk}(\alpha)}$ ($\implies \ra\leq\mathrm{rk}(\alpha)$) and
\begin{equation}\label{eqp-7}
0\to \QQ(-m)^{\oplus \mathrm{rk}(\beta)} \to \ker(\sp^{2m+1})\to \ker (\sp_{\IH}^{2m+1})\to 0
\end{equation}
is exact ($\implies \rb=\mathrm{rk}(\beta)$).

Finally, if $m=1$ and $\rg$ [resp. $\rg+\rka$] is the number of components of $X_0$ [resp. $\pi^{-1}(X_0)$], then by \eqref{eqp-2}-\eqref{eqp-3} and C-S for $\tilde{\cx}$,
\begin{equation*}
\begin{split}
\mathrm{rk}(\IH^2_c(\cx)\to \IH^2(\cx))&= \mathrm{rk}(H^2_c(\tilde{\cx})\to H^2(\tilde{\cx}))-\rka\\
&=\mathrm{cork}\{H^0_{\lm}(X_t)\to H_4(\pi^{-1}(X_0))(-2)\}-\rka\\
&=(\rg+\rka-1)-\rka = \rg-1.
\end{split}
\end{equation*}
So when $\rg=1$, $\sp^2_{\IH}$ is am isomorphism and $\ra=\mathrm{rk}(\alpha)$.
\end{proof}

In order to make use of this Theorem, we need a complementary result on how to modify the computations of $H_{\van}^*$ in previous sections.

\begin{prop}\label{prop-dis}
Let $Z$ be a component of $\mathrm{sing}(X_0)$, $Z_1=Z\cap \mathsf{S}_1$ the $A_{\infty}$ locus, and $Z_1^*\underset{\jmath}{\hookrightarrow}Z_1$ the complement of the nodes of $\cx$. Then $\ch^j\phi_f\QQ_{\cx}|_{Z_1}=0$ for $j\neq n-1$, and $\ch^{n-1}\phi_f\QQ_{\cx}|_{Z_1}\cong\jmath_! \mathcal{L}(-\lfloor\tfrac{n}{2}\rfloor)_{(-1)^n}$, where the subscript indicates the action of $T^{\mathrm{ss}}$, and $\mathcal{L}$ is a local system \textup{(}pointwise $\cong \QQ(0)$\textup{)} with monodromy $(-1)^n$ about each point of $Z_1\setminus Z_1^*$. \textup{(}Its monodromy about $Z\setminus Z_1$ depends on the singularity types there.\textup{)}
\end{prop}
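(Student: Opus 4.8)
The plan is to work analytically-locally on $Z$ and reduce to two model computations. Over $Z_1^*$ (away from the nodes of $\cx$ and the bad points $\mathsf S_0$) the pair $(\cx,X_0)$ is, since $\mathsf S_1$ is the open stratum along which $X_0$ has transverse $A_1$, locally a product of a disc in $Z_1$ with a transverse slice on which $X_0$ acquires an $A_1$-singularity; near a node $q\in Z_1\setminus Z_1^*$, $\cx$ has an ordinary double point and I will write down an explicit normal form. Concretely the three facts to establish are: (i) near $p\in Z_1^*$ one has $\ff=z_1^2+\cdots+z_n^2$ in coordinates $(z_1,\dots,z_{n+1})$ with $Z=\{z_1=\cdots=z_n=0\}$, so that $\ch^j\phi_f\QQ_{\cx}$ vanishes near $p$ for $j\neq n-1$ and $\ch^{n-1}\phi_f\QQ_{\cx}|_{Z_1^*}$ is a rank-one local system with stalk $V^{n-1}_{f,p}$; (ii) $V^{n-1}_{f,p}\cong\QQ(-\lfloor\tfrac n2\rfloor)$ as a MHS, with $T^{\text{ss}}$ acting by $(-1)^n$; and (iii) at a node $q$, the Milnor fibre $\mathfrak F_{f,q}$ is contractible, so $\imath_q^*\phi_f\QQ_{\cx}=0$. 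Granting these, $\phi_f\QQ_{\cx}|_{Z_1}$ is concentrated in cohomological degree $n-1$, and $\ch^{n-1}\phi_f\QQ_{\cx}|_{Z_1}$ is then a constructible sheaf on the smooth curve $Z_1$ which restricts on $Z_1^*$ to a local system $\mathcal L(-\lfloor\tfrac n2\rfloor)_{(-1)^n}$ (with $\mathcal L$ the underlying $\QQ(0)$-local system) and has vanishing stalks at the punctures; the unique such sheaf is $\jmath_!\mathcal L(-\lfloor\tfrac n2\rfloor)_{(-1)^n}$, which is the assertion.

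For (i)--(ii): the local product structure along $\mathsf S_1$ (Thom's isotopy theorem, or the explicit equisingular coordinates) gives the stated normal form and makes $\phi_f\QQ_{\cx}$ locally constant near $p\in Z_1^*$. Since $z_1^2+\cdots+z_n^2$ involves only $n$ of the $n+1$ coordinates, $\mathfrak F_{f,p}$ is homotopy equivalent to the Milnor fibre of the $A_1$-singularity in $n$ variables, i.e.\ to $S^{n-1}$ (this can also be read off from \eqref{eq-ST*}); hence $\tilde H^j(\mathfrak F_{f,p})=0$ for $j\neq n-1$ and is one-dimensional for $j=n-1$. To pin down the MHS and $T^{\text{ss}}$-action I will iterate the Sebastiani--Thom formula for weighted spectra ($\S$\textbf{II.6}, with the convolution $*$ of $\S$\ref{S7.2}): from $\tilde\sigma(z^2)=[(\tfrac12,0)]$ one gets $\tilde\sigma(z_1^2+\cdots+z_n^2)=[(\tfrac12,0)]^{*n}=[(\tfrac n2,\,2\lfloor\tfrac n2\rfloor)]$, so $V^{n-1}_{f,p}$ is one-dimensional of Hodge type $(\lfloor\tfrac n2\rfloor,\lfloor\tfrac n2\rfloor)$ with $T^{\text{ss}}$-eigenvalue $\mathbf e(\tfrac n2)=(-1)^n$; that is, $V^{n-1}_{f,p}\cong\QQ(-\lfloor\tfrac n2\rfloor)_{(-1)^n}$.

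For (iii) and the monodromy of $\mathcal L$ about the nodes, I first record the local model of $(\cx,\ff)$ at a node $q\in Z_1$. Since $X_0$ still has a transverse $A_1$ at $q$ (embedding dimension $n+1$) whereas $\cx$ has an ordinary double point there (embedding dimension $n+2$), comparison of Zariski tangent spaces forces $d\ff_q\neq 0$ with $\ker(d\ff_q)=T_qX_0$ tangent to the tangent cone of $\cx$ at $q$; hence in suitable local analytic coordinates
\[
\cx=\{ab+w_2^2+\cdots+w_{n+1}^2=0\},\qquad Z=\{a=w_2=\cdots=w_{n+1}=0\},\qquad \ff=a
\]
(the higher-order terms of the defining equation being absorbable into $b$ and the $w_j$). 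Then $\mathfrak F_{f,q}=\{\ff=\ve\}\cap\cx\cap B$ is, via $b=-\ve^{-1}(w_2^2+\cdots+w_{n+1}^2)$, the graph of a holomorphic function over a small ball in $(w_2,\dots,w_{n+1})$-space, hence contractible, giving $\imath_q^*\phi_f\QQ_{\cx}=0$. For the monodromy I pass to a nearby point $(0,z_0,0,\dots,0)\in Z_1^*$ with $z_0\neq0$ small: solving for $a$ identifies a neighbourhood of it in $\cx$ with a polydisc in $(b,w_2,\dots,w_{n+1})$ on which $\ff=-b^{-1}(w_2^2+\cdots+w_{n+1}^2)=u(b)\cdot(w_2^2+\cdots+w_{n+1}^2)$ with $u(b)=-b^{-1}$ a unit; the substitution $w_j':=\sqrt{u(b)}\,w_j$ trivializes $\ff$ to the standard $A_1$-singularity in $n$ variables, and as $b$ traverses a small loop around $0$ the chosen branch of $\sqrt{u(b)}$ changes sign, so the trivialization is twisted by $(w')\mapsto(-w')$. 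This acts on $\tilde H^{n-1}(\mathfrak F_{A_1})\cong H^{n-1}(S^{n-1})$ by the degree of the antipodal map, namely $(-1)^n$, which is the monodromy of $\mathcal L$ about $q$. (The monodromy of $\mathcal L$ about a point of $Z\setminus Z_1$ involves the singularity type there and is not asserted uniformly.)

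I expect the main obstacle to be the normal-form step of (iii): showing rigorously that the linear part of $\ff$ at the node is forced to be tangent to the tangent cone of $\cx$, and checking that the higher-order terms in the equation of $\cx$ affect neither the contractibility of $\mathfrak F_{f,q}$ nor the sign in the monodromy computation. Once the normal form is secured, the Milnor-fibre computation at generic points, the weighted-spectrum bookkeeping in (ii), and the gluing of sheaf data described in the first paragraph are all routine.
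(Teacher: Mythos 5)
Your proposal is correct and follows essentially the same route as the paper: the key points there are likewise the local normal form at a node (written as $\sum_{\ell=1}^n z_\ell^2 = t\,z_{n+1}$ with $f=t$, which is your $ab+\sum w_j^2$ with $\ff=a$ up to relabeling), the contractibility of the Milnor fibre at the node, and the identification of the vertical monodromy about the node with the horizontal ($T^{\mathrm{ss}}$) monodromy $(-1)^n$. Your more intrinsic derivation of the normal form (nondegeneracy of the Hessian of the equation of $\cx$ together with corank one of its restriction to $\{\ff=0\}$ forces the cross-term, after which $b$ absorbs the higher-order terms) and your direct antipodal-map computation of the sign are sound and merely replace the paper's appeal to the explicit $F+tG$ coordinates and the $t\leftrightarrow z_{n+1}$ symmetry.
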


\begin{proof}
We may choose local coordinates $(z_1,\ldots,z_{n+1})$ in which $Z_1 =\{z_1=\cdots =z_n=0\}$ and $q\in Z_1\setminus Z_1^*$ is $\uo$, and $F+tG=0$ is $\sum_{\ell=1}^n z_{\ell}^2 = tz_{n+1}$.  Hence the Milnor fiber at $q$ is contractible, and $\tilde{H}^k(\mathfrak{F}_{f,q})=0$ ($\forall k$). It is also clear from the equation that the ``vertical'' monodromy (in $z_{n+1}$) equals the ``horizontal'' monodromy (in $t$) on $\tilde{H}^{n-1}(\mathfrak{F}_{f,p})$ for $p\in Z_1^*$ near $q$.
\end{proof}

\begin{example}\label{ex-degk3}
Let $\cx$ be a degeneration of quartic $K3$ surfaces, with $X_0$ as in Examples \ref{ex7(2)}-\ref{ex7(3)}, $\mathrm{sing}(X_0)=\bar{\mathsf{S}}_1=Z$ a smooth conic curve, and $\mathrm{sing}(\cx)=$ 8 points on $\mathsf{S}_1=Z_1$.  In the computation of $H^2_{\van}(X_t)$, only $H^1(\ch^1\phi_f\QQ_{\cx})$ ($=H^1(\bar{\mathsf{S}}_1,\jmath'_!\mathcal{L}(-1))$, where $\jmath'\colon Z_1^*\hookrightarrow Z$) changes.  In each of the five cases (\emph{i})-(\emph{iv}), the effect is simply to add $8$ $T$-invariant $(1,1)$-classes to $H^2_{\van}(X_t)$.  So the middle term of \eqref{eq-csd} (with $m=1$) has rank $8$; indeed, by Theorem \ref{th-csd} we must have $\ra+\rb=8$.

This determines the MHS types for $H^2_{\lm}$ and $H^2_{\van}$ in each case,\footnote{It should be noted that in case (\emph{i}) the LMHS still must have type II. Blowing up $\cx$ along the base locus $F=G=0$ in $\PP^3$ gives a crepant resolution $\hat{\cx}$ (with exceptional $\PP^1$'s $\{L_i\}$) to which we may apply the argument from Example \ref{ex7(2)}. The exact sequence $(\{0\}=)\oplus H^1(L_i)\to H^2(\hat{X}_0)\to H^2(X_0)$ then passes the nontrivial weight-1 part of $H^2(X_0)$ to $H^2(\hat{X}_0)$ hence to $H^2_{\lm}$.} while leaving an apparent ambiguity in $H^2(X_0)$:
\[\begin{tikzpicture}[scale=0.85]
\foreach \j in {0,...,4} {\draw [thick,gray,<->] (\j*3,2) -- (\j*3,0) -- (2+\j*3,0);}
\foreach \j in {0,...,4} {\node at (\j*3+2.1,0) {\tiny $p$};}
\foreach \j in {0,...,4} {\node at (0.1+\j*3,2.1) {\tiny $q$};}
\node at (1,-0.6) {$(i)$};
\node at (4,-0.6) {$(ii)$};
\node at (7,-0.6) {$(iii)$};
\node at (10,-0.6) {$(iii')$};
\node at (13,-0.6) {$(iv)$};
\foreach \j in {0,...,4} {\filldraw (1+3*\j,1) circle (3pt);}
\foreach \j in {0,1,2} {\node at (1.5+3*\j,1.2) {\tiny $6+\rb$};}
\foreach \j in {3,4} {\node at (1.5+3*\j,1.2) {\tiny $5+\rb$};}
\foreach \j in {1,3} {\filldraw (3*\j,0) circle (3pt);}
\foreach \j in {1,3} {\node at (3*\j+0.2,0.2) {\tiny $1$};}
\foreach \j in {0,1} {\filldraw (0+\j,1-\j) circle (3pt);}
\foreach \j in {0,1} {\node at (0.2+\j,1.2-\j) {\tiny $1$};}
\end{tikzpicture}\]
We claim that, in fact, $\rb=0$ in all cases.  Let $\hat{X}_0\overset{\eta}{\twoheadrightarrow}X_0$ be the normalization; this is a smooth $\mathrm{dP}_4$, or a $\mathrm{dP}_4$ with one node which is \emph{not} on $\hat{Z}:=\eta^{-1}(Z)$. Here $\hat{Z}\twoheadrightarrow Z$ is the double cover branched at the $J_{k,\infty}$ points:  $\hat{Z}$ is (\emph{i}) smooth elliptic; (\emph{ii}) nodal rational; (\emph{iii}) cuspidal rational; (\emph{iii}$'$) 2 rational curves meeting in a pair of nodes; or (\emph{iv}) 2 rational curves meeting in a tacnode.  Via the exact sequence $H^2(Z)\oplus H^2(\hat{X}_0)\to H^2(\hat{Z})\to H^3(X_0)\to H^3(\hat{X}_0)$ we therefore have that $H^3(X_0)=\{0\}$, hence in particular that $H^3_{\pha}(X_0)^{1,1}=\{0\}$, as desired.\footnote{The main point is to show that, in cases (\emph{iii}$'$) and (\emph{iv}), the two components of $\hat{Z}$ have intersection matrix $\tiny \begin{pmatrix}0&2\\2&0\end{pmatrix}$, so that $H^2(\hat{X}_0)$ surjects onto $H^2(\hat{Z})$.}
\end{example}

\begin{cor}\label{cor-dis}
Theorem \ref{th7.4} and Remark \ref{rem-jk} remain true for degenerations of surfaces whose total spaces admit nodes on the $A_{\infty}$ locus of $\mathrm{sing}(X_0)$.
\end{cor}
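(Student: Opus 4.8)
The plan is to verify that the proof of Theorem~\ref{th7.4} survives the passage to a total space $\cx$ carrying nodes along the $A_\infty$ locus $\mathsf{S}_1$; the only genuinely non-local ingredient in that proof is the sheaf $\ch^1\phi_f\QQ_\cx$ feeding the spectral sequence~\eqref{eq-SS*}, and this is precisely what Proposition~\ref{prop-dis} describes. First I would record the three facts that proof uses: \emph{(a)} the local weighted spectra \eqref{eq-sk2*}--\eqref{eq-sk3*}, giving $h^{2,0}(V^2_{f,p})=\lfloor\tfrac{\kappa-1}{2}\rfloor$ at a point $p$ of type $J_{\kappa,\infty}$; \emph{(b)} that $E_2^{2,1}=H^2(\ch^1\phi_f\QQ_\cx)$ is of pure Hodge type $(2,2)$, so that the only differential $d_2\colon E_2^{0,2}\to E_2^{2,1}$ out of $E_2^{0,2}=\bigoplus_{p\in\mathsf{S}_0}V^2_{f,p}$ kills the $(2,0)$-part of the latter; and \emph{(c)} the chain $p_g\geq h^{2,0}(H^2_{\lm}(X_t))\geq h^{2,0}(H^2_{\van}(X_t))\geq\lfloor\tfrac{\kappa-1}{2}\rfloor$, whose middle inequality follows from \eqref{eq7.0} and the vanishing of $H^3(X_0)^{2,0}$ for a reduced projective surface $X_0$ --- indeed $\gr^W_2 H^3(X_0)$ is a subquotient of $H^2$ of the normalized double curves, hence of type $(1,1)$. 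Since by hypothesis the nodes of $\cx$ all lie on $\mathsf{S}_1$, the germ of $f$ at each $J_{\kappa,\infty}$ point is untouched ($\cx$ being smooth there), so \emph{(a)} is unaffected; and because the Milnor fibers at the new nodes are contractible (Proposition~\ref{prop-dis}), the sheaf $\ch^2\phi_f\QQ_\cx$ is still supported on $\mathsf{S}_0$, so that $E_2^{0,2}$ --- and in particular its $(2,0)$-part at the $J_{\kappa,\infty}$ point --- is unchanged. Finally \eqref{eq7.0}, being the hypercohomology long exact sequence of the standard triangle relating $\imath^*\QQ_\cx$, $\psi_f\QQ_\cx$, $\phi_f\QQ_\cx$, holds for arbitrary $\cx$, and $X_0$ is still a reduced surface, so \emph{(c)} goes through verbatim.

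It thus remains to recheck \emph{(b)} for the modified $\ch^1\phi_f\QQ_\cx$. By Proposition~\ref{prop-dis} with $n=2$, $\ch^1\phi_f\QQ_\cx|_{Z_1}\cong\jmath_!\mathcal{L}(-1)$ for a rank-one, weight-zero local system $\mathcal{L}$; so over all of $\mathrm{sing}(X_0)$ the sheaf $\ch^1\phi_f\QQ_\cx$ is the extension by zero, Tate-twisted by $\QQ(-1)$, of a rank-one weight-zero local system on the dense open $Z_1^*$ (glued along the finite complement with the stalks $V^1_{f,p}$, which vanish at the $J_{\kappa,\infty}$ points). Applying the recollement triangle $\jmath_!\jmath^*\to\mathrm{id}\to\imath_*\imath^*\overset{+1}{\longrightarrow}$ for $\jmath\colon Z_1^*\hookrightarrow\mathrm{sing}(X_0)$, and using that a skyscraper sheaf has vanishing $H^{\geq1}$, one obtains $H^2(\ch^1\phi_f\QQ_\cx)\cong H^2_c(Z_1^*,\mathcal{L})(-1)$; as the top cohomology of a curve with a weight-zero coefficient system, Tate-twisted, this is pure of weight $4$ and Hodge type $(2,2)$, exactly as in the smooth case. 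The computation does not see the stalks along the finite complement, so it simultaneously covers the situation of Remark~\ref{rem-jk} with several $J$-points (or other singularities) present.

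With \emph{(a)}--\emph{(c)} re-established, the proof of Theorem~\ref{th7.4} applies verbatim, yielding $p_g\geq\lfloor\tfrac{\kappa-1}{2}\rfloor$, and Remark~\ref{rem-jk} follows by the same summation over the $\{p_j\}$. The step deserving genuine care --- and the main obstacle --- is \emph{(b)} together with the constancy of $E_2^{0,2}$: a priori the nodes of $\cx$ could either enlarge $\ch^2\phi_f\QQ_\cx$ (altering $E_2^{0,2}$ and its $(2,0)$-part) or inject a weight-$3$, type $(2,1)+(1,2)$ class into $H^2(\ch^1\phi_f\QQ_\cx)$ that $d_2$ could use to cancel part of the $(2,0)$-piece; it is exactly the contractibility of the Milnor fibers at the new nodes, via Proposition~\ref{prop-dis}, that rules out both possibilities.
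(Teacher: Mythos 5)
Your proposal is correct, and for the sheaf-theoretic half it runs along the same lines as the paper: Proposition~\ref{prop-dis} is exactly what guarantees that the nodes of $\cx$ (having contractible Milnor fibers) neither enlarge $\ch^2\phi_f\QQ_{\cx}$ nor put anything of type $(2,0)$ into $E_2^{2,1}=H^2(\ch^1\phi_f\QQ_{\cx})$, so the $(2,0)$-classes contributed by the $J_{\kappa,\infty}$ point to $H^2_{\van}$ survive. Where you genuinely diverge is in the remaining inequality $h^{2,0}(H^2_{\lm})\geq h^{2,0}(H^2_{\van})$: the paper gets this from Theorem~\ref{th-csd}, i.e.\ from the Decomposition-Theorem analysis of the nodal total space showing that the Clemens--Schmid properties (A),(B) can only fail in type $(1,1)$, whereas you argue directly that $H^3(X_0)$ of a reduced projective surface has no $I^{2,0}$ (since $\gr^W_2H^3$ is a subquotient of $H^2$ of smooth projective curves in a hyperresolution, hence of type $(1,1)$), so exactness of \eqref{eq7.0} as a sequence of MHS --- which holds for arbitrary, possibly singular, total space --- forces the $(2,0)$ part of $H^2_{\van}$ to lift to $H^2_{\lim}$. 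Your route is lighter (no appeal to \eqref{eqp-1}--\eqref{eqp-7}) and for this step does not even use that the singularities of $\cx$ are nodes on the $A_\infty$ locus; the paper's route, on the other hand, yields the finer quantitative information ($\ra+\rb\leq\rka$, failure only in type $(m,m)$) that is needed elsewhere (e.g.\ Example~\ref{ex-degk3}) but is overkill here. One small caveat: your identification $H^2(\ch^1\phi_f\QQ_{\cx})\cong H^2_c(Z_1^*,\mathcal{L})(-1)$ presumes the stalks of $\ch^1\phi_f\QQ_{\cx}$ vanish at \emph{all} points of $\mathsf{S}_0$, which holds at the $J_{\kappa,\infty}$ points and at the nodes but need not hold if other singularity types (e.g.\ the $T$-series of \S\ref{S7.3}) are present on $\mathrm{sing}(X_0)$; this is harmless, since all stalks of $\ch^1\phi_f\QQ_{\cx}$ have $F^2=0$, so $H^2$ can in any case acquire no $(2,0)$ part --- which is all the argument uses, and matches the level of detail in the paper's own proof of Theorem~\ref{th7.4}.
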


\begin{proof}
By Theorem \ref{th-csd}, we still have $h^{2,0}(H^2_{\lm})\geq h^{2,0}(H^2_{\van})$; and Prop. \ref{prop-dis} makes clear that $H^2(\ch^1\phi_f\QQ_{\cx})$ still has no $(2,0)$ part.
\end{proof}

\section{The double box}\label{S7.6}

As a final application, we discuss a higher dimensional example, where in particular the notion of higher rational singularities become relevant. Specifically, we determine what these tools can say about the cohomology of the cubic hypersurface $X_0\subset \PP^6$ cut out by the second Symanzik polynomial associated to the ``double box'' Feynman diagram with general masses and momenta (in physics ``dimension'' $D\geq 4$).  For $D=4$, Bloch \cite{Bl21} showed that $\gr^W_5 H^5(X_0)\cong H^1(E)(-2)$ for some elliptic curve $E$.  Subsequently, Doran, Harder and Vanhove \cite{DHV23} determined $E$ and discovered that when $D>4$ it is replaced by a curve of genus $2$. Our goal here is to compute the rest of $H^5(X_0)$ and to say more about the monodromy of a smoothing of $X_0$.  From our point of view, this is interesting as a nontrivial example of a degeneration with non-isolated 1-rational (hence also 1-log-canonical) singularities, cf.~Rem.~\ref{rem-1rat}.  We will also discuss the degeneration that arises from specializing momenta \emph{en route} from $D>4$ to $D=4$.

\begin{rem}\label{rem-1rat}
Using local Bernstein-Sato polynomials, one easily computes that the microlocal log-canonical threshold $\tilde{\alpha}_{X_0}=\frac{5}{2}$; that $X_0$ is 1-rational then follows from the appendix to \cite{FL-DuBois}.  This is not used in the remainder of the section, though the calculations do provide a nice illustration of \eqref{eq-th1.6} together with the additional isomorphism $\gr^2_F H^*(X_0)\cong \gr^2_F(H^*_{\lim}(X_t))^{T_{\text{ss}}}$ that comes from singularities being 1-rational.
\end{rem}

Thus we shall consider a degeneration $\cx \to \Delta$ of the form $0=F(\UZ)+tG(\UZ)$, where $G$ is a general cubic, and the $2^{\text{nd}}$ Symanzik polynomial $F$ takes the form
$$F(\UZ)=Z_{0123}Q(Z_4,Z_5,Z_6)+Z_{3456}Q'(Z_0,Z_1,Z_2)+Z_3 P(\UZ),$$
with $P(\UZ)=\sum_{i=0}^3\sum_{j=3}^6 P_{ij}Z_iZ_j$ (and $P_{33}=0$) and $Q,Q'$ quadrics, and where we denote $Z_0+\cdots+Z_3=:Z_{0123}$ etc.~for convenience. More precisely, let $\mo_1,\ldots,\mo_6\in \RR^{1,D-1}$ denote ``momenta'' summing to $\vec{0}$ and $\ms_0,\ldots,\ms_6\in \RR$ denote ``masses'', \emph{all assumed generic}; then writing
$$U(\UZ)=Z_{012}Z_{456}+Z_3Z_{012456}$$
for the $1^{\text{st}}$ Symanzik polynomial, we have 
\begin{align*}
Q'(\UZ)&=\mo_2^2 Z_0Z_1+\mo_{23}^2 Z_0Z_2+\mo_3^2Z_1Z_2+Z_{012}\textstyle\sum_{i=0}^2 \ms_i^2 Z_i,\\
Q(\UZ)&=\mo_6^2Z_4Z_5+\mo_{56}^2Z_4Z_6+\mo_5^2Z_5Z_6+Z_{456}\textstyle\sum_{i=4}^6 \ms_i^2 Z_i,
\end{align*}
and
\begin{align*}
\textstyle P(\UZ)=\sum_{i=0}^2\sum_{j=4}^6\left(\sum_{k=i+2}^{10-j}\mo_k\right)^2Z_iZ_j&\textstyle +Z_{012}\sum_{j=4}^6\ms_j^2Z_j\\ &\textstyle +Z_{456}\sum_{i=0}^2\ms_i^2Z_i+\ms_3^2U(\UZ).
\end{align*}
If $D>4$, then $\text{sing}(X_0)$ consists of the two disjoint conic curves 
\begin{align*}
C&=\{Z_0=Z_1=Z_2=Z_3=Q=0\}\;\;\;\text{and}\\
C'&=\{Q'=Z_3=Z_4=Z_5=Z_6=0\}.
\end{align*}
When $D=4$, $\text{sing}(X_0)=C\disj C'\disj \cs''$, where $\cs''$ consists of two nodes $p_1'',p_2''$ with nonzero $Z_3$-coordinates.\footnote{This is contrary to the claim in \cite[Prop.~2.1]{Bl21}. The equations of these nodes are quite ugly: write $\mo_4=\a_2\mo_2+\a_3\mo_3+\a_5\mo_5+\a_6\mo_6$ for the linear dependency forced by $D=4$ (of course also $\mo_1=-\mo_2-\mo_3-\mo_4-\mo_5-\mo_6$). Define quantities $\N'=\a_2(1+\a_2)\mo_2^2+2\a_2(1+\a_3)\mo_2\cdot\mo_3+\a_3(1+\a_3)\mo_3^2$, $\N=\a_5(1+\a_5)\mo_5^2+2\a_6(1+\a_5)\mo_5\cdot\mo_6+\a_6(1+\a_6)\mo_6^2$, $\M'=-\a_2\ms_0^2+(\a_2-\a_3)\ms_1^2+(1+\a_2)\ms_2^2$, and $\M=-\a_5\ms_4^2+(\a_6-\a_5)\ms_5^2+(1+\a_5)\ms_6^2$.  Let $\rho_1,\rho_2$ be the roots of the quadratic equation $$(\N'-\M')\rho^2+(\N'-\M'+\N-\M+\ms_3^2)\rho+(\N-\M)=0$$ (which arises from intersecting $U=0$ with five hyperplanes). Then $$p_i''=[-\a_2\rho_i:(\a_2-\a_3)\rho_i:(1+\a_3)\rho_i:\tfrac{(\N'-\M')\rho_i+(\N-\M)}{\ms_3^2}:-\a_6:(\a_6-\a_5):1+\a_5].$$} It will be important later that $U(p_i'')=0$.

The exceptional divisors of the blowup along $C\disj C'$ are quadric bundles with smooth total spaces \cite[Lem.~3.1]{Bl21}; in particular, their singular fibers have corank 1. Since the latter are determined by the intersections of $C,C'$ with a cubic discriminant locus, there are 6 on each bundle. Equivalently, $X_0$ has generically $A_{\infty}$ singularities on $C\disj C'$, with pinch points at $\cs=\{p_1,\ldots,p_6\}\subset C$ and $\cs'=\{p_1',\ldots,p_6'\}\subset C'$; the stratification of $\text{sing}(X_0)$ is thus given by $\str_1=C\stm \cs\disj C'\stm \cs'$ and $\str_0=\cs\disj \cs'\disj \cs''$.  Write $\rd:=|\cs''|=0$ ($D>4$) resp.~$2$ ($D=4$), so that $|\str_0|=12+\rd$.

We must also keep track of the $12$ points $\{q_1,\ldots,q_6\}\subset C$ and $\{q_1',\ldots,q_6'\}\subset C'$ given by intersecting these curves with $G=0$, which are the (nodal) singularities of the total space. Denote by $\str_1^*$ the complement of these points in $\str_1$. As $n=5$, we are in the milder case (i) of Theorem \ref{th-csd}, and they do not affect Clemens-Schmid.  Writing $\ch^k_{\van}:=\ch^k\phi_t\QQ_{\cx}$, we know that $\ch^k_{\van}=0$ for $k\neq 4,5$.

To evaluate $\ch^4_{\van}$ and $\ch^5_{\van}$ (stalkwise), we consider the local forms of $\cx$ at points of $\text{sing}(X_0)$:
\begin{itemize}[leftmargin=0.5cm]
\item at $p\in \cs''$ ($D=4$ only):  $\sum_{\ell=1}^6 x_{\ell}^2=t$ $\implies$ $\ch^5_{\van}\simeq \QQ(-3)$ (and $\ch^4=0$)
\item at $p\in \str_1^*\subset C\disj C'$: $\sum_{\ell=1}^5x_{\ell}^2=t$ ($x_6$ free) $\implies$ $\ch^4_{\van}\simeq \QQ(-2)_-$ (and $\ch_{\van}^5=0$), where the subscript ``$-$'' means that $T_{\text{ss}}$ acts by $-1$ (Prop.~\ref{prop-dis});
\item at $q_i,q_i'$: $\sum_{\ell=1}^5 x_{\ell}^2=tx_6$ $\implies$ $\ch^5_{\van}=\ch^4_{\van}=0$, and the generic $\ch_{\van}^4$ on $\str_1^*$ has vertical monodromy $-1$ as $p$ goes about these points (Prop.~\ref{prop-dis} again); and
\item at $p_i,p_i'$: $\sum_{\ell=1}^4x_{\ell}^2+x_6x_5^2=t$ $\implies$ $\ch_{\van}^5\simeq \QQ(-3)$ (and $\ch^4_{\van}=0$), and the generic $\ch^4_{\van}$ on $\str_1^*$ has vertical monodromy $-1$ as $p$ goes about these points.  (This is by repeating the analysis of pinch points in \S\ref{S7.3}, which yields $\tilde{\sigma}_{F,p_i}^4=0$ and $\tilde{\sigma}_{F,p_i}^5=[(3,6)]$.)
\end{itemize}
Thus $\ch^5\phi_t\QQ_{\cx}$ is a sum of $12+\rd$ skyscraper $\QQ(-3)$'s on $\str_0$; while $\ch^4\phi_t\QQ_{\cx}\simeq\kappa_!(\QQ(-2)_-\otimes \chi)$, where $\kappa\colon \str_1^*\hookrightarrow C\disj C'$, and $\chi$ is the rank-1 ``Kummer sheaf'' with monodromies of $-1$ about each of the points $p_i,p_i',q_i,q_i'$. Since there are $12$ such points on each conic curve, the double-covers $\tilde{C}$, $\tilde{C}'$ (of $C,C'$) branched along them are curves of of genus 5.  Writing $H:=H^1(\tilde{C})\oplus H^1(\tilde{C}')$, the spectral sequence \eqref{eq-SS*} becomes 
$$\xymatrix@C=1em@R=1em{\ch^5 & \QQ(-3)^{\oplus 12+\rd}  & 0 & 0 \\ \ch^4 & 0 & V(-2)_- & 0  \\ & H^0 & H^1 & H^2 \ar@{-}(8,-15);(70,-15)
\ar@{-}(8,-15);(8,6)} $$	
whence $H^5_{\van}(X_t)\cong V(-2)_-\oplus \QQ(-3)^{\oplus 12+\rd}$ (the subscript ``$-$'' again refers to the action of $T_{\text{ss}}$) and $H^k_{\van}(X_t)=0$ for $k\neq 5$.

In the terms of the vanishing-cycle sequence
$$0\to H^5(X_0)\to H^5_{\lim}(X_t)\to H^5_{\van}(X_t)\overset{\delta}{\to} H^6(X_0)\to H^6_{\lim}(X_t)\to 0$$
we thus have the following possibilities for Hodge-Deligne diagrams:
\begin{equation}\label{e76!!}
\begin{tikzpicture}[scale=0.85]
\foreach \j in {0,...,4} {\draw [thick,gray,<->] (\j*3,2) -- (\j*3,0) -- (2+\j*3,0);}
\foreach \j in {0,...,4} {\node at (\j*3+2.1,0) {\tiny $p$};}
\foreach \j in {0,...,4} {\node at (0.1+\j*3,2.1) {\tiny $q$};}
\foreach \j in {0,...,4} {\draw [gray] (\j*3-0.1,1.5) -- (\j*3+0.1,1.5);}
\foreach \j in {0,...,4} {\draw [gray] (\j*3-0.1,1) -- (\j*3+0.1,1);}
\foreach \j in {0,...,4} {\draw [gray] (\j*3-0.1,0.5) -- (\j*3+0.1,0.5);}
\foreach \j in {0,...,4} {\draw [gray] (\j*3+0.5,0.1) -- (\j*3+0.5,-0.1);}
\foreach \j in {0,...,4} {\draw [gray] (\j*3+1,0.1) -- (\j*3+1,-0.1);}
\foreach \j in {0,...,4} {\draw [gray] (\j*3+1.5,0.1) -- (\j*3+1.5,-0.1);}
\node at (1,-0.6) {$H^5(X_0)$};
\node at (4,-0.6) {$H^5_{\lim}(X_t)$};
\node at (7,-0.6) {$H^5_{\van}(X_t)$};
\node at (10,-0.6) {$H^6(X_0)$};
\node at (13,-0.6) {$H^6_{\lim}(X_t)$};
\foreach \j in {0,1} {\filldraw (1+3*\j,1) circle (2pt);}
\foreach \j in {0} {\filldraw (1.5+3.07*\j,1) circle (2pt);}
\foreach \j in {0} {\filldraw (1+2.93*\j,1.5) circle (2pt);}
\filldraw (3.93,1.5) circle (2pt);
\filldraw [red] (4.57,1) circle (2pt);
\foreach \j in {1,2,3,4} {\filldraw (1.5+3*\j,1.5) circle (2pt);}
\foreach \j in {0,1} {\node at (0.5+3*\j,0.8) {\Tiny $12+\rd-a$};}
\node at (5,1.75) {\Tiny $12+\rd-a$};
\node at (1.5+12,1.75) {\tiny $1$};
\node at (1.5+9,1.75) {\tiny $a+1$};
\draw [->] (4.4,1.4) -- (4.1,1.1);
\filldraw  (4.43,1) circle (2pt);
\filldraw [red]  (4.07,1.5) circle (2pt);
\filldraw [red] (7.5,1) circle (2pt);
\filldraw [red] (7,1.5) circle (2pt);
\node [red] at (7,1.75) {\tiny $10$};
\node [red] at (7.75,1) {\tiny $10$};
\node [red] at (4.8,1) {\tiny $10$};
\node at (4.4,0.8) {\tiny $b$};
\node at (3.75,1.5) {\tiny $b$};
\node [red] at (4.1,1.7) {\tiny $10$};
\node at (0.8,1.5) {\tiny $b$};
\node at (1.5,0.8) {\tiny $b$};
\node at (4.18,1.35) {\Tiny $N$};
\node at (7.8,1.75) {\Tiny $12+\rd$};
\end{tikzpicture}
\end{equation}
where $a=\mathrm{rk}(\delta)$ is unknown, red dots signify nontrivial $T_{\text{ss}}$-action, and
\begin{equation}\label{e76!!!}
b=h^{2,1}(X_t)-(12+\rd-a+10)=21-(22+\rd-a)=a-\rd-1.
\end{equation}
As in the case of nodes (\S\textbf{II.2.2}), more information is needed to resolve the ambiguity.

Clearly we have $1+\rd\leq a\leq 12+\rd$, hence $b\leq 11$.  We can do much better than this by generalizing the proof of Theorem \textbf{II.2.9} to arrive at the following

\begin{lem}\label{l76}
Let $I,\I$ denote the ideal sheaves of $C\disj C'$ resp.~$\cs''$ on $\PP^6$, and 
\begin{equation}\label{e76s}
\ev\colon H^0(\PP^6,\co_{\PP^6}(2))\to \frac{H^0(C\disj C',(\co_{\PP^6}/I^2)(2))}{H^0(C\sqcup C',N_{C\sqcup C'/\PP^6}(-1))}\oplus H^0(\cs'',(\co_{\PP^6}/\I)(2))
\end{equation}
the evaluation map.\footnote{The map $N_{C/\PP^6}(-1)\to (\co_{\PP^6}/I^2)|_C(2)$ (for $C$ and $C'$) is induced by $T_{\PP^6}(-1)\overset{\mathfrak{F}}{\to}\co_{\PP^6}(2)$, where $T_{\PP^6}(-1)$ is globally generated by $\{\partial_{Z_i}\}_{i=0}^6$ and $\mathfrak{F}(\partial_{Z_i}):=\partial_{Z_i}F$.  In \eqref{e76ss}, the sheaf subcomplex $\Omega_{\PP^6,I}^{\bullet}\subset \Omega_{\PP^6}^{\bullet}$ is generated by $I\Omega^1_{\PP^6}$ and $dI$, and the map $\Omega^5_{\PP^6,I}(2X_0)\to \Omega^6_{\PP^6}(3X_0)\otimes I^2(\otimes\I)$ is also induced by $F$.} Let $\TP\overset{\B}{\twoheadrightarrow}\PP^6$ be the blowup along $C\disj C'\disj \cs''$, and $\TX\overset{\b}{\twoheadrightarrow}X_0$ the strict transform, with respective exceptional divisors $\EB$ and $\eb$. Then we have identifications of vector spaces
\begin{equation}\label{e76ss}
\begin{split}
\mathrm{coker}(\ev)&\cong H^1(\PP^6,I^2\otimes \I\otimes K_{\PP^6}(3X_0))/\mathrm{im}\{H^1(\PP^6,\Omega^5_{\PP^6,I}(2X_0))\}\\
&\cong \ker(H^6(\TX)\to H^8(\TP))\cong \mathrm{im}\left(H^4(\TX)\to\tfrac{H^4(\eb)}{H^4(\EB)}\right)^{\vee}\\
&\cong \ker\left(\tfrac{H^4(\eb)}{H^4(\EB)}\twoheadrightarrow W_4H^5(X_0)\right)^{\vee}.
\end{split}
\end{equation}
\end{lem}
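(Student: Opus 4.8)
The plan is to prove the chain of isomorphisms in \eqref{e76ss} by moving from the sheaf-theoretic side (evaluation of quadrics on the non-reduced structure along $C\disj C'\disj\cs''$) to the topological side (cohomology of the resolved total space and of the exceptional divisors), closely following the template of Theorem \textbf{II.2.9} but accounting for the $1$-dimensional (rather than isolated) part of the singular locus and the $\I$-twist coming from the nodes $\cs''$. The argument naturally splits into three linkages: (1) $\mathrm{coker}(\ev)$ with the sheaf-cohomology group $H^1(\PP^6, I^2\otimes\I\otimes K_{\PP^6}(3X_0))$ modulo the image of $H^1(\PP^6,\Omega^5_{\PP^6,I}(2X_0))$; (2) that group with $\ker(H^6(\TX)\to H^8(\TP))$ and, by duality, with $\mathrm{im}(H^4(\TX)\to H^4(\eb)/H^4(\EB))^\vee$; and (3) that image with $\ker(H^4(\eb)/H^4(\EB)\twoheadrightarrow W_4 H^5(X_0))^\vee$.

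\emph{Step (1): the evaluation/residue identification.} First I would set up the exact sequence relating $\co_{\PP^6}(2)$ to the non-reduced structure sheaf $(\co_{\PP^6}/I^2)(2)$ along $C\disj C'$ and $(\co_{\PP^6}/\I)(2)$ along $\cs''$; the quotient of $H^0(C\disj C',(\co_{\PP^6}/I^2)(2))$ by $H^0(C\disj C', N_{C\disj C'/\PP^6}(-1))$ appears precisely because the normal-bundle directions are the ones ``seen'' by the derivative $\mathfrak{F}=dF$ (as explained in the first footnote to the Lemma). Because $X_0=\{F=0\}$ is cut by a cubic and $K_{\PP^6}=\co(-7)$, a Koszul/residue computation of the form used for the adjoint ideal (and the Griffiths-type description of $H^5(X_0)$ via $H^1$ of the log-de Rham complex $\Omega^\bullet_{\PP^6}(\ast X_0)$) converts $\mathrm{coker}(\ev)$ into the stated quotient $H^1(\PP^6, I^2\otimes\I\otimes K_{\PP^6}(3X_0))/\mathrm{im}\{H^1(\PP^6,\Omega^5_{\PP^6,I}(2X_0))\}$. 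Here one uses the subcomplex $\Omega^\bullet_{\PP^6,I}$ generated by $I\Omega^1_{\PP^6}$ and $dI$ to capture exactly the part of the residue that does \emph{not} extend over the singular locus.

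\emph{Step (2): transport to the resolution.} Next I would pass to the blowup $\B\colon\TP\to\PP^6$ along $C\disj C'\disj\cs''$, with $\TX$ the strict transform and $\EB\supset\eb$ the exceptional divisors. The point is that $H^1(\PP^6, I^2\otimes\I\otimes K_{\PP^6}(3X_0))$ (modulo the $\Omega^5$-image) computes the same thing as a primitive-cohomology piece of $\TX$: one reads it off the Gysin/localization sequence for $\TX\setminus\eb\hookrightarrow\TX$ together with the fact that $\TX$ is smooth (so Poincaré–Lefschetz duality applies) and that $\TP$ has ``no cohomology in the relevant bidegree'' except what comes from $\EB$. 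Concretely this gives $\mathrm{coker}(\ev)\cong\ker(H^6(\TX)\to H^8(\TP))$; dualizing (degree $6$ on a smooth projective of dimension... here $\TX$ has dimension $5$, so use $H^6\leftrightarrow H^4$ up to Tate twist) yields the isomorphism with $\mathrm{im}(H^4(\TX)\to H^4(\eb)/H^4(\EB))^\vee$. The final step is to identify $H^4(\TX)\to H^4(\eb)/H^4(\EB)$ with the natural map whose kernel corresponds to $W_4 H^5(X_0)$: this comes from the blowup long exact sequence $\cdots\to H^4(\TX)\to H^4(\eb)\to H^5(X_0)\to\cdots$ (using $H^4(\EB)$ to kill the part of $H^4(\eb)$ pulled back from the base), combined with the weight filtration — $W_4 H^5(X_0)$ being exactly the image of $H^4(\eb)/H^4(\EB)$ in $H^5(X_0)$ — giving the last identification $\mathrm{coker}(\ev)\cong\ker(H^4(\eb)/H^4(\EB)\twoheadrightarrow W_4 H^5(X_0))^\vee$.

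\emph{Main obstacle.} The delicate point is Step (1): correctly identifying which first-order sections along $C\disj C'$ are ``absorbed'' by the normal bundle — i.e., justifying the quotient by $H^0(C\disj C', N_{C\disj C'/\PP^6}(-1))$ in \eqref{e76s} and the precise subcomplex $\Omega^\bullet_{\PP^6,I}$ in \eqref{e76ss} — so that the residue map is well-defined and its cokernel matches the Hodge-theoretic $W_4 H^5(X_0)$ piece on the nose. This requires care because, unlike the isolated-singularity case of Theorem \textbf{II.2.9}, here one must track a whole $1$-parameter family of transverse nodes and the corank-$1$ pinch points $p_i, p_i'$ where the quadric bundle degenerates, and verify that the $\Omega^5_{\PP^6,I}(2X_0)$-image accounts for exactly the classes that \emph{do} extend (and hence contribute nothing to $W_4 H^5$). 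The transverse $A_\infty$ nature of the singularities along $\str_1$ (Prop.~\ref{prop-dis}) and the explicit local forms listed above are what make this bookkeeping tractable; the blowup and duality steps (2)–(3) are then formal, modulo checking that $H^\ast(\TP)$ and $H^\ast(\EB)$ contribute nothing beyond what is subtracted — which follows since $\TP$ is a projective bundle-type blowup of $\PP^6$ and $\EB$ is a union of projective bundles over smooth curves and points.
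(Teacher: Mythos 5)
There is a genuine gap at the heart of your Step (1)--(2) linkage. You assert that a ``Koszul/residue computation'' together with a Griffiths-type description of $H^5(X_0)$ via $\Omega^{\bullet}_{\PP^6}(\ast X_0)$ converts $\mathrm{coker}(\ev)$ into the quotient $H^1(\PP^6,I^2\otimes\I\otimes K_{\PP^6}(3X_0))/\mathrm{im}\{H^1(\PP^6,\Omega^5_{\PP^6,I}(2X_0))\}$; but $X_0$ is singular, so no such residue description applies to it directly, and on the topological side you only gesture at ``localization plus Poincar\'e--Lefschetz duality'' without explaining why a piece of $H^6(\TX)$ is computable by coherent data at all. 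The actual mechanism runs through the smooth pair $(\TP,\TX)$: one first shows that $H^6_0(\TX):=\ker(H^6(\TX)\to H^8(\TP))$ is entirely of type $F^3$ (this comes out of the four-term sequence $0\to H^4(\TP)\to H^4(\TX)\to H^4(\eb)/H^4(\EB)\to W_4H^5(X_0)\to 0$ obtained from Mayer--Vietoris and weak Lefschetz, whose dual also gives your Step (3)), then uses Schoen's lemma to identify $H^6_0(\TX)\cong F^4H^7(\TP\setminus\TX)\cong\HH^7(\TP,\Omega^{\bullet\ge 4}_{\TP}(\bullet-3))$, and then invokes Bloch's computation of the pushforward $\B_*\Omega^{\bullet\ge 4}_{\TP}(\bullet-3)$, which is precisely the three-term complex $\Omega^4_{\PP^6}(X_0)\to\Omega^5_{\PP^6,I}(2X_0)\to\Omega^6_{\PP^6}(3X_0)\otimes I^2\otimes\I$ on $\PP^6$. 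That pushforward computation is where both the twist $I^2\otimes\I$ and the correction term $\Omega^5_{\PP^6,I}(2X_0)$ in \eqref{e76ss} actually come from; in your write-up they are postulated (``captures exactly the part of the residue that does not extend'') rather than produced.

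Even granting that reduction, collapsing the hypercohomology $\HH^3$ of that complex to the stated quotient, and matching it with $\mathrm{coker}(\ev)$, requires specific vanishing statements absent from your outline: $H^3(\PP^6,\Omega^4_{\PP^6}(X_0))=0$; $H^2(\PP^6,\Omega^5_{\PP^6,I}(2X_0))=0$, proved via the short exact sequence $0\to\Omega^5_{\PP^6,I}(2X_0)\to\Omega^5_{\PP^6}(2X_0)\to N_{C\disj C'/\PP^6}(-1)\to 0$ and the explicit splitting $N_{C\disj C'/\PP^6}(-1)\cong(\co_{\PP^6}^{\oplus 4}\oplus\co_{\PP^6}(1))|_{C\disj C'}$; and $H^1(\PP^6,\Omega^6_{\PP^6}(3X_0))=0$, which is what lets the $H^0$-level description of the right-hand side of \eqref{e76s} (the same normal-bundle sequence is what justifies the quotient by $H^0(C\disj C',N_{C\disj C'/\PP^6}(-1))$, a point you flag as delicate but do not resolve) agree with $\mathrm{coker}(\ev)$ on the nose. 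Your Steps (2)--(3) --- the blowup exact sequence identifying $W_4H^5(X_0)$ as the cokernel of $H^4(\TX)\to H^4(\eb)/H^4(\EB)$, followed by dualization on the smooth $5$-fold $\TX$ --- do agree with the intended argument; but as written, the sheaf-theoretic half of the lemma, which is its real content, is not proved.
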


\begin{proof}[Sketch]
Noting that $\TX$ is smooth, Mayer-Vietoris and weak Lefschetz yield
\begin{equation}\label{e76n}
0\to H^4(\TP)\overset{\a}{\to}H^4(\TX)\overset{\beta}{\to}\tfrac{H^4(\eb)}{H^4(\EB)}\overset{\gamma}{\to}W_4 H^5(X_0)\to 0,
\end{equation}
with dual
$$0\leftarrow H^8(\TP)(1)\overset{\a^{\vee}}{\leftarrow}H^6(\TX)\overset{\beta^{\vee}}{\leftarrow}\ker\{H^4(\eb)(-1)\to H^6(\EB)\}\,;$$
and we set $H^6_0(\TX):=\mathrm{im}(\beta^{\vee})=\ker(\a^{\vee})$. By localization and \cite[Lem.~1.4]{Schoen}, 
$$H^6_0(\TX)=F^3 H^6_0(\TX)\cong F^4 H^7(\TP\stm\TX)\cong \HH^7(\TP,\Omega^{\bullet\geq 4}_{\TP}(\bullet-3)),$$
which by \cite[Lem.~4.2ff]{Bl21} is
\begin{align*}
&\cong \HH^7(\PP^6,\B_*\Omega_{\TP}^{\bullet\geq 4}(\bullet-3))\\
&\cong \HH^3(\PP^6,\Omega^4_{\PP^6}(X_0)\to \Omega^5_{\PP^6,I}(2X_0)\to \Omega^6_{\PP^6}(3X_0)\otimes I^2\otimes \I).
\end{align*}
Now $H^3(\PP^6,\Omega^4_{\PP^6}(X_0))=\{0\}$; and since
$$0\to \Omega^5_{\PP^6,I}(2X_0)\to \Omega^5_{\PP^6}(2X_0)\to N_{C\disj C'/\PP^6}(-1)\to 0$$
is exact and $N_{C\disj C'/\PP^6}(-1)\cong (\co_{\PP^6}^{\oplus 4}\oplus \co_{\PP^6}(1))|_{C\disj C'}\cong \text{``}(\co_{\PP^1}^{\oplus 4}\oplus \co_{\PP^1}(2))^{\oplus 2}\text{''}$ (by identifying $C\cong \PP^1\cong C'$), we find $H^2(\PP^6,\Omega^5_{\PP^6,I}(2X_0))= \{0\}$.  So we conclude that
$$H^6_0(\TX)\cong H^1(\PP^6,I^2\otimes \I\otimes K_{\PP^6}(3X_0))/\mathrm{im}\{H^1(\PP^6,\Omega_{\PP^6,I}^5(2X_0))\},$$
which --- by noting $\Omega^6_{\PP^6}(3X_0)\cong \co_{\PP^6}(2)$, $H^1(\PP^6,\Omega^6(3X_0))=\{0\}$, and $$\text{RHS}\eqref{e76s}\cong \frac{H^0\left(\PP^6,\Omega^6(3X_0)/(\Omega^6(3X_0)\otimes I^2\otimes \I)\right)}{H^0(\PP^6,\Omega^5_{\PP^6}(2X_0)/\Omega^5_{\PP^6,I}(2X_0))}$$ --- identifies with $\mathrm{coker}(\ev)$.
\end{proof}

\begin{thm}\label{t76}
For general masses and momenta, in dimension $D>4$ \textup{[}resp.~$D=4$\textup{]}, the second Symanzik hypersurface for the double box has Hodge numbers $h^{2,2}(H^5(X_0))=9$ \textup{[}resp.~$10$\textup{]} and $h^{3,2}(H^5(X_0))=h^{2,3}(H^5(X_0))=2$ \textup{[}resp.~$1$\textup{]}. The LMHS $H^5_{\lim}$ for the general $1$-parameter cubic smoothing has $\mathrm{rk}(N)=h^{3,3}=h^{2,2}=9$ \textup{[}resp.~$10$\textup{]} and $h^{2,2}=h^{3,3}=12$ \textup{[}resp.~$11$\textup{]} while $T_{\textup{ss}}$ acts by $-1$ on a subspace of $\gr^W_{5}$ of rank $20$.  The degeneration from $D>4$ to $D=4$, in which $X_0$ acquires $2$ nodes, has monodromy logarithm of rank $1$.
\end{thm}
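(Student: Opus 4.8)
The plan is to realize this degeneration --- call it $g\colon\cx^{\flat}\to\disc$, a generic one-parameter specialization of the momenta with general fibre a (general) $D>4$ double-box hypersurface $X_0^{D>4}$ and special fibre $X_0=X_0^{D=4}$ --- as a family which differs from an equisingular one only by the birth of the two nodes $p_1'',p_2''$, and then to trap $\mathrm{rk}(N)$ from both sides using the $H^5$'s computed above. First I would check that, along a generic path, the singular locus $C\disj C'$ of $X_0^{(\ve)}$ together with its $6+6$ pinch points merely moves, so the family is equisingular there; hence $\phi_g\QQ_{\cx^{\flat}}$ is supported on $\cs''=\{p_1'',p_2''\}$, where $X_0^{D=4}$ has acquired ordinary nodes. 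At each $p_i''$ the relevant local model is $\sum_{\ell=1}^6 x_\ell^2=\ve$ (the node unfolded, as in the list preceding \eqref{e76!!}): $\cx^{\flat}$ is smooth there, the Milnor fibre of $g$ is $\simeq S^5$, so $\ch^k\phi_g\QQ_{\cx^{\flat}}$ is $\QQ(-3)$ for $k=5$ and $0$ otherwise; and at the finitely many points where $\cx^{\flat}$ itself is singular --- those on $C\disj C'$ at which the $\ve$-derivative of the Symanzik polynomial vanishes --- the transverse quadric is still of corank $1$, so the Milnor fibre of $g$ is contractible there and contributes nothing. Thus $H^k_{\van}(g):=\HH^k(\phi_g\QQ_{\cx^{\flat}})$ equals $\QQ(-3)^{\oplus 2}$ for $k=5$ and vanishes otherwise.

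Next, from the triangle $\QQ_{X_0}\to\psi_g\QQ_{\cx^{\flat}}\to\phi_g\QQ_{\cx^{\flat}}\xrightarrow{+1}$ and $H^4_{\van}(g)=0$ I obtain an exact sequence of MHS
\begin{equation*}
0\to H^5(X_0)\xrightarrow{\;\sp\;}H^5_{\lm}(g)\to\QQ(-3)^{\oplus 2}\xrightarrow{\;\delta\;}H^6(X_0)\to\cdots,
\end{equation*}
and since the underlying vector space of $H^5_{\lm}(g)$ is $H^5$ of a general fibre, the Hodge numbers already established in this theorem give $\dim H^5_{\lm}(g)=\dim H^5(X_0^{D>4})=9+2+2=13$ while $\dim H^5(X_0^{D=4})=10+1+1=12$, so $\dim\mathrm{coker}(\sp)=1$. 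Now because $\phi_g\QQ_{\cx^{\flat}}$ is concentrated at the two nodes, the monodromy $T$ of $g$ about $0$ is the product $T_1T_2$ of the Picard--Lefschetz transformations there; as $n=5$ is odd these are symplectic transvections $x\mapsto x+\langle x,\delta_i\rangle\delta_i$, they commute, and $\langle\delta_1,\delta_2\rangle=0$ (disjoint Milnor balls), so $T=\mathrm{id}+N_1+N_2$, $N=\log T=N_1+N_2$, $N^2=0$, and $\mathrm{rk}(N)=\dim\mathrm{span}\{\delta_1,\delta_2\}$. Since $\image(\sp)$ is $T$-invariant it lies in $\ker N$, whence $\mathrm{rk}(N)=\mathrm{codim}\ker N\le\dim\mathrm{coker}(\sp)=1$ (so the two vanishing cycles are in fact proportional in $H^5_{\lm}(g)$). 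Finally $N\neq 0$: if $T=\mathrm{id}$, then by the nilpotent-orbit theorem (in the admissible-VMHS setting) $H^5_{\lm}(g)$ would be the fibre at $0$ of the extended VMHS, carrying the general-fibre Hodge numbers, in particular $h^{2,2}=9$; but $\sp\colon H^5(X_0^{D=4})\hookrightarrow H^5_{\lm}(g)$ is an injection of MHS and $h^{2,2}(H^5(X_0^{D=4}))=10$, which is absurd by strictness along $W_4$. Hence $\mathrm{rk}(N)=1$.

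The step I expect to be the real obstacle is the first one: confirming that along a generic momentum path the \emph{only} non-equisingular event is the appearance of the two ordinary nodes of $X_0^{D=4}$, and that the corank-$1$ (transverse-quadric) singularities which $\cx^{\flat}$ itself acquires along $C\disj C'$ contribute nothing to $\phi_g$. Once that is in place the rest is formal; alternatively one may replace $\cx^{\flat}$ by a small modification that is an intersection-homology manifold (changing neither the $\ve$-monodromy nor $H^5_{\van}(g)$), and then, $n$ being odd, a Clemens--Schmid argument in the spirit of Theorem \ref{th-csd}(i) gives $\mathrm{rk}(N)=\dim\mathrm{coker}(\sp)$ outright --- a second route to $\mathrm{rk}(N)=1$.
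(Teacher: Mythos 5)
Your proposal proves only the final sentence of the theorem (the rank-$1$ monodromy logarithm for the momentum specialization from $D>4$ to $D=4$), and it does so by taking as given precisely the statements that constitute the theorem's main content: you invoke ``the Hodge numbers already established in this theorem'' to get $\dim H^5(X_0^{D>4})=13$ and $\dim H^5(X_0^{D=4})=12$. But those Hodge numbers --- $h^{2,2}(H^5(X_0))=9$ resp.~$10$, $h^{3,2}=h^{2,3}=2$ resp.~$1$, together with the LMHS data and the $T_{\textup{ss}}$-action for the cubic smoothing --- are where essentially all the work lies, and nothing in your proposal establishes them. The paper's proof reduces everything, via \eqref{e76!!}--\eqref{e76!!!}, to showing $a=\mathrm{rk}(\delta)=3$ ($D>4$) resp.~$4$ ($D=4$); it identifies $a=\mathrm{rk}(\mathrm{coker}(\ev))$ using Lemma \ref{l76} together with the count of $H^4$ of the exceptional quadric bundles; and it then computes $\ker(\ev)$ explicitly, the key point being the identity \eqref{e76s!}, $((0123)^2+J_F)\cap((3456)^2+J_F)=J_F\oplus\CC\langle Z_3^2\rangle\oplus\CC\langle U\rangle$, which gives $\dim\ker(\ev)=9$ and hence $a=3$; for $D=4$ the map $\ev$ is enriched by evaluation at the two nodes, and the facts that $U$ and all $\partial_{Z_i}F$ vanish at $\cs''$ while $Z_3^2$ does not give $a=4$. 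Since this computation is absent, the theorem is not proved, and your own argument for $\mathrm{rk}(N)=1$ rests on unproven inputs.

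Concerning the part you do treat: your route (vanishing cycles of $g$ concentrated at $\cs''$, so $H^5_{\van}(g)\cong\QQ(-3)^{\oplus 2}$ and $\sp$ is injective; $\mathrm{im}(\sp)\subseteq\ker N$ forcing $\mathrm{rk}(N)\le\dim\mathrm{coker}(\sp)=1$; and $N\neq 0$ because an inclusion of MHS $H^5(X_0^{D=4})\hookrightarrow H^5_{\lm}(g)$ with $h^{2,2}=10$ cannot land in a limit with the $D>4$ Hodge--Deligne numbers if $N=0$) is reasonable and indeed more explicit than the paper, which treats this claim as an immediate consequence of knowing $a$ in both cases. But the step you yourself flag is genuinely open in your write-up: you must verify equisingularity of the family along the moving conics and pinch points (so that $\phi_g\QQ_{\cx^{\flat}}$ really is a skyscraper at the two nodes), and check the local normal form $\sum_{\ell=1}^5x_\ell^2=\ve x_6$ (contractible Milnor fibre) at the finitely many points of $C\disj C'$ where $\cx^{\flat}$ is singular, in the spirit of Proposition \ref{prop-dis}. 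Also note that Theorem \ref{th-csd} does not apply verbatim to your alternative Clemens--Schmid route, since $\cx^{\flat}$ has non-nodal singularities along the swept-out conics; one would need the intersection-cohomology version of Clemens--Schmid after checking $\IC_{\cx^{\flat}}$-related hypotheses, which is more than a ``small modification'' remark can carry.
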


\begin{proof}
By \eqref{e76!!}-\eqref{e76!!!}, it will suffice to show that $a=3$ ($D>4$) resp.~$4$ ($D=4$). Note that by \eqref{e76!!}, $\mathrm{rk}(W_4H^5(X_0))=12+\rd-a$.

First, we claim that $a=\mathrm{rk}(\mathrm{coker}(\ev))$. Each of the quadric 3-fold bundles in $\eb$ (over $C$ and $C'$) has 6 corank-1 singular fibers, hence contributes $8$ to $h^4(\eb)$ and $6$ to $\mathrm{rk}(\tfrac{H^4(\eb)}{H^4(\EB)})$ (while each quadric 4-fold arising from a node contributes $2$ resp.~$1$). By the Lemma, $\mathrm{rk}(\mathrm{coker}(\ev))$ is thus
$$\mathrm{rk}(\tfrac{H^4(\eb)}{H^4(\EB)})-\mathrm{rk}(W_4H^5(X_0))=2\cdot 6+\rd\cdot 1-(12+\rd-a)=a,$$
as desired.

To compute $\ev$ when $D>4$, write $S^2$ for homogeneous polynomials in $Z_0,\ldots,Z_6$ of degree $2$, and $(0123)^2$ [resp.~$(3456)^2$; $J_F$] for the subspace given by polynomials in $Z_0,\ldots,Z_3$ [resp.~polynomials in $Z_3,\ldots,Z_6$; linear combinations of $\partial_{Z_0}F,\ldots,\partial_{Z_6}F$]. Recalling $C=\{Z_0=\cdots=Z_3=Q=0\}$, the only degree-$2$ polynomials vanishing to order $2$ on $C$ are those in $(0123)^2$, making this the kernel of the map from $S^2$ to $H^0(C,(\co_{\PP^6}/I^2)(2))$.  As the latter space and $S^2/(3456)^2$ both have dimension $18$,\footnote{$\dim(S)=\binom{8}{2}=28$, $\dim((3456)^2)=\binom{5}{2}=10$, and an easy computation using $0\to N^{\vee}_{C/\PP^6}(2)\to (\co_{\PP^6/I^2})|_C(2)\to\co_C(2)\to 0$ and $N^{\vee}_{C/\PP^6}(2)\cong \co_C(1)^{\oplus 4}\oplus \co_C$.} they are isomorphic. Similarly, one finds that $\CC\langle \{\partial_{Z_i}\}_{i=0}^6\rangle=H^0(\PP^6,T_{\PP^6}(-1))\to H^0(C,N_{C/\PP^6}(-1))$ is an isomorphism by a sheaf cohomology computation, whence $\ev$ is just the natural map
\begin{equation}\label{e76s?}
\ev\colon S^2\to \frac{S^2}{(0123)^2+J_F}\oplus \frac{S^2}{(3456)^2+J_F}.
\end{equation}

Note that $(0123)^2\cap (3456)^2=\CC\langle Z_3^2\rangle$. We claim that 
\begin{equation}\label{e76s!}
((0123)^2+J_F)\cap ((3456)^2+J_F)=J_F\oplus \CC\langle Z_3^2\rangle \oplus \CC\langle U\rangle,
\end{equation}
making $\dim(\ker(\ev))=9$. To see this, note that the ``cross-terms'' of type $(012)(456)$ in the $\partial_{Z_i}F$ are (in order)
$$\{Z_{456}\partial_{Z_i}Q'\}_{i=0}^2,\;\;P+\ms_3^2Z_3\partial_{Z_3}U,\;\;\{Z_{012}\partial_{Z_i}Q\}_{i=4}^6.$$
Denoting by $c_0,c_1,c_2,c_4,c_5,c_6$ the unique constants that make $$\textstyle\sum_{i=0}^2c_i\partial_{Z_i}Q'=Z_{012}\;\;\;\text{and}\;\;\;\sum_{i=4}^6c_i\partial_{Z_i}Q=Z_{456},$$ we set $\partial'F:=\sum_{i=0}^2c_i\partial_{Z_i}F$, $\partial F=\sum_{i=4}^6c_i\partial_{Z_i}F$. Then $\delta F:=\partial'F-\partial F$ is (up to scale) the unique element of $J_F$ with no cross-terms (by genericity of $P$), and we may write uniquely mod $\CC\langle Z_3^2\rangle$ $\delta F=:g-g'$ with $g\in (3456)^2$ and $g'\in (0123)^2$.  One checks that $g=\partial'F-U$ and $g'=\partial F-U$ (whence $U\in \text{LHS}\eqref{e76s!}$). Now given $G_0\in \text{LHS}\eqref{e76s!}$, we have $G'+H'=G_0=G+H$ with $G'\in (0123)^2$, $G\in (3456)^2$, and $H,H'\in J_F$.  Then $H-H'=\sum_{i=0}^6 \tilde{c}_i\partial_{Z_i}F=G'=G$ has no cross-terms and must be a multiple of $\delta F$, whence $G=\mu g+\lambda Z_3^2$, and $G_0\in \text{RHS}\eqref{e76s!}$.

From \eqref{e76s!} we can now immediately read off 
\begin{align*}
\mathrm{rk}(\mathrm{coker}(\ev))&=\dim (\text{RHS}\eqref{e76s?})-(\dim(S^2)-9)\\
&=2(28-10-7)-(28-9)=3,
\end{align*}
proving the Theorem for $D>4$. For $D=4$, $\ev$ is ``enriched'' by the evaluation at the two nodes, becoming
$$\ev\colon S^2\to \frac{S^2}{(0123)^2+J_F}\oplus \frac{S^2}{(3456)^2+J_F} \oplus \frac{S^2}{\I}$$
(with $S^2/\I\cong \CC^2$). Since all the $\partial_{Z_i}F$, as well as $U$,  vanish on $\cs''$, they map to $0$ in $S^2/\I$;\footnote{The presence of $U$ in this kernel implies that the map $H^1(\PP^1,\Omega_{\PP^6,I}(2X_0))\to H^1(\PP^6,\Omega^6_{\PP^6}(3X_0)\otimes I^2\otimes \I)$, claimed to be injective in \cite[Lem.~5.3]{Bl21}, in fact has a rank 1 kernel.} on the other hand, $Z_3^2$ does not map to $0$ in $S^2/\I$. So $\mathrm{rk}(\ker(\ev))$ has increased by $1$ and the rank of the codomain by $2$, making $\mathrm{rk}(\mathrm{coker}(\ev))=3+2-1=4$.
\end{proof}

We conclude with some remarks on the extensions of MHS in \eqref{e76!!}.  As usual, $H^5_{\lim}$ and $H^5_{\van}$ are the direct sums of their $T_{\text{ss}}$-invariants (black) and other $T_{\text{ss}}$-eigenspaces (red). The remaining biextension on the ``black'' part of $H_{\lim}^5$ includes the extension class of 
\begin{equation}\label{e76ext}
0\to W_4 H^5(X_0)\to H^5(X_0)\to \gr^W_5 H^5(X_0)\to 0,
\end{equation}
which is not hard to describe heuristically. Indeed, by \eqref{e76n} we have $(W_4H^5(X_0))^{\vee}\cong \ker\{H_4(\eb)\to H_4(\TX)\oplus H^4(\EB)\}$, which is represented by certain algebraic 2-cycles $Z\in \mathrm{CH}^2(\eb)$ which in particular bound in $\TX$: writing $\imath\colon \eb\hookrightarrow \TX$ for the inclusion, we have $\imath_*(Z)=\partial \Gamma$ for some 5-chain $\Gamma$ on $\TX$. Noting that $\gr^W_5 H^5(X_0)\cong H^5(X_0)$, \eqref{e76ext} is then computed by the Abel-Jacobi classes
\begin{equation}\label{e76aj}
\mathrm{AJ}_{\TX}(Z)=\int_{\Gamma}(\cdot)\in \frac{\{F^3 H^5(\TX,\CC)\}^{\vee}}{H_5(\TX,\ZZ)}=J^3(\TX)
\end{equation}
of such $Z$.

Now assume $D>4$. By \cite{DHV23}, $\TX$ has a birational model $\hat{X}_0$ with a map $\Pi\colon \hat{X}_0 \to\PP^1$, whose fibers are generically smooth quadric 4-folds, with 6 corank-1 fibers over points $\Sigma\subset \PP^1$. (As a rational map on $X_0$, $\Pi$ is just given by $U(\UZ)/Z_3^2$.) Let $\mathscr{C}\overset{\pi}{\to} \PP^1$ denote the (genus $2$) double-cover branched over $\Sigma$.  There exists a $\PP^2$-bundle $\mathscr{Y}\overset{\rho}{\to}\mathscr{C}$ and an embedding $I\colon \mathscr{Y}\hookrightarrow \hat{X}_0$ such that $\Pi\circ I=\pi\circ \rho$; let $Y\overset{\tilde{\rho}}{\to}\mathscr{C}$ be the birational model with an embedding $\tilde{I}\colon Y\hookrightarrow \TX$.  Then $\tilde{I}_*\circ \tilde{\rho}^*\colon H^1(\mathscr{C})(-2)\overset{\cong}{\to}H^5(\TX)$ recovers the isomorphism of [op.~cit.]. Write $\{\omega_i\}_{i=1}^2\subset \Omega^1(\mathscr{C})$ for a basis and $\Omega_i:=\tilde{I}_*(\tilde{\rho}^*\omega_i)$ for 5-currents spanning $F^3H^5(\TX)$.

Finally, for $Z\in \mathrm{CH}^2(\eb)$ as above, we set 
\begin{equation}\label{e76cyc}
Z_{\mathscr{C}}:=\tilde{\rho}_*(Y\cdot\imath_*Z) \in CH^1_{\text{hom}}(\mathscr{C}).
\end{equation}
Then we have
$$\mathrm{AJ}_{\TX}(Z)(\Omega_i)=\int_{\Gamma}\tilde{I}_*\tilde{\rho}^*\omega_i=\int_{\Gamma\cap Y}\tilde{\rho}^*\omega_i=\int_{\rho_*(\Gamma\cap Y)}\omega_i=\mathrm{AJ}_{\mathscr{C}}(Z_{\mathscr{C}})(\omega_i)$$
since $\partial\tilde{\rho}(\Gamma\cap Y)=\tilde{\rho}(\partial\Gamma\cap Y)=Z_{\mathscr{C}}$, and so \eqref{e76aj} (hence \eqref{e76ext}) is simply computed by the Abel-Jacobi images of the cycles \eqref{e76cyc} on $\mathscr{C}$.

\bibliography{csseq2}
 
\end{document}